\documentclass{amsart}

\usepackage[hmarginratio=1:1]{geometry}

% packages with mathsymbols and theoremstyles used
\usepackage{amsmath}
\usepackage{amssymb}
\usepackage{amsthm}
\usepackage{esint}

% packages for the handling of the tables and plots
\usepackage{datatool}
\usepackage{graphicx}
\usepackage{capt-of}
\usepackage{multirow}
\usepackage{booktabs}

% used to highlight recent changes to the document
\usepackage{color}

% add links to citations and references
\usepackage{hyperref}

% bibliography
\usepackage[backend=biber, style=alphabetic, sorting=nyt, url=false, isbn=false, giveninits=true]{biblatex}

\addbibresource{literature.bib}

% macro file
% differential operators

\def\del{\partial}

% set relations
\def\embeds{\hookrightarrow}
\def\embedscomp{\hookrightarrow \hookrightarrow}

% sets of numbers
\def\real{\mathbb R}

% caligraphic letters

\def\G{\mathcal G}
\def\I{\mathcal I}
\def\J{\mathcal J}
\def\M{\mathcal M}
\def\N{\mathcal N}
\def\P{\mathcal P}
\def\S{\mathcal S}
\def\T{\mathcal T}

% spaces
\def\leer{\hspace{1 cm}}
\def\kurz{\hspace{0.5 cm}}

% roman numerals
\newcommand{\Rom}[1]{\textup{\uppercase\expandafter{\romannumeral #1\relax}}}

% theorems
\theoremstyle{plain}
\newtheorem{definition}{\begin{large}Definition\end{large}}[section]
\newtheorem{theorem}[definition]{\begin{large}Theorem\end{large}}

\newtheorem{lemma}[definition]{\begin{large}Lemma\end{large}}

\newtheorem{proposition}[definition]{\begin{large}Proposition\end{large}}

\theoremstyle{definition}
\newtheorem{example}[definition]{\begin{large}Example\end{large}}
\newtheorem*{acknowledgement}{\begin{large}Acknowledgement\end{large}}

\allowdisplaybreaks

\title[Quasi-optimal error estimate for inextensible elastic curves]{Quasi-optimal error estimate for the approximation of the elastic flow  of inextensible curves}

\author{Sören Bartels}
\address{Abteilung für angewandte Mathematik, Albert-Ludwigs-Universität Freiburg, Hermann-Herder-Str.~10, 79104 Freiburg i.~Br., Germany}
\email{bartels@mathematik.uni-freiburg.de}

\author{Klaus Deckelnick}
\address{Institut für Analysis und Numerik, Otto-von-Guericke-Universität Magdeburg, Universitätsplatz~2, 39106 Magdeburg, Germany}
\email{klaus.deckelnick@ovgu.de}

\author{Dominik Schneider}
\address{Abteilung für angewandte Mathematik, Albert-Ludwigs-Universität Freiburg, Hermann-Herder-Str.~10, 79104 Freiburg i.~Br., Germany}
\email{dominik.schneider@mathematik.uni-freiburg.de}

\date{\today}

\subjclass[2020]{74B20 65M15 35K55}

\begin{document}
  \begin{abstract}
    A space-discretization for the elastic flow of inextensible curves is devised and quasi-optimal convergence of the corresponding semi-discrete problem is proved for a suitable discretization of the nonlinear inextensibility constraint. Further a fully discrete time-stepping scheme that incorporates this constraint is proposed and unconditional stability and convergence of the discrete scheme are proved. Finally some numerical simulations are used to verify the obtained results experimentally.
  \end{abstract}
  
  \keywords{Elastic flow, nonlinear partial differential equation, numerical approximation}
  \maketitle  
  % introduction
  \section{Introduction}
  \label{section:Introduction}
  Given an interval $I = (a,b)$ and an arc-length parameterized curve $u: I \to \real^d$, in absence of twist, its bending energy $E(u)$ is given by
  \begin{equation*}
    E(u) = \frac{1}{2} \int_I \vert u'' \vert^2 \ dx.
  \end{equation*}
  This model goes back to Bernoulli and can be derived as a special case by dimension reduction from three-dimensional hyperelasticity, see~\cite{MM03,Bar20}.
  We are interested in finding energy-decreasing evolutions for this energy functional under given Dirichlet boundary conditions $u = u_D$ on $\Gamma_D \subset \{a\}$, $u' = u_D'$ on $\Gamma_D' \subset \del I$ and the arc-length constraint $\vert u' \vert^2 = 1$ in $I$.
  The first variation of the energy functional yields the Euler--Lagrange equation
  \begin{equation*}
    0 = \int_I u'' \cdot v'' \ dx
  \end{equation*}
  for all tangential fields $v$ satisfying homogeneous boundary conditions and the linearized arc-length constraint $u' \cdot v' = 0$.
  The elastic flow is then defined as the $L^2$ gradient flow of $E$.
  Thus if $z \in H^1([0,T];L^2(I)^d) \cap L^\infty([0,T];H^2(I)^d)$ is a solution to the elastic flow with initial value $z_0$ and given boundary conditions $u_D, u_D'$, $z$ satisfies $z(0) = z_0$, the Euler-Lagrange equation
  \begin{equation}\label{equation:euler_lagrange}
    0 = \int_I z_t \cdot v + z_{xx} \cdot v_{xx} \ dx
  \end{equation}
  for all tangential fields $v$ and the arc-length constraint $\vert z_x \vert^2 = 1$. The arc-length constraint can be incorporated into the Euler Lagrange equation via the use of a Lagrange multiplier. This yields
  \begin{equation}\label{equation:elastic_flow_pde}
    0 = \int_I z_t \cdot v + z_{xx} \cdot v_{xx} + \lambda z_x \cdot v_x \ dx
  \end{equation}
  for all $v \in H^2(I)^d$ satisfying homogeneous boundary conditions, with $\lambda = - z_x \cdot \int_x^b z_t \ d\sigma - \vert z_{xx} \vert^2$ the Lagrange multiplier. This Lagrange multiplier is obtained by testing $(\ref{equation:euler_lagrange})$ with $w = v - \int_a^x (v_x \cdot z_x)z_x \ d\sigma$ for $v$ as above. In its strong form problem $(\ref{equation:elastic_flow_pde})$ reads
  \begin{equation}
    \label{equation:strong problem}
    \begin{gathered}
      z_t + z_{xxxx} - (\lambda z_x)_x = 0 \kurz \text{ in } I \times (0,T), \\
      \begin{aligned}
      z(\cdot,t) &= u_D \text{ on } \Gamma_D \times (0,T), & \leer
      z_x(\cdot,t) &= u_D' \text{ on } \Gamma_D' \times (0,T), \\
      z_{xx} &= 0 \text{ on } (\del I \setminus \Gamma_D') \times (0,T), & \leer
      z_{xxx} - \lambda z_x &= 0 \text{ on } (\del I \setminus \Gamma_D) \times (0,T), \\
      z(\cdot, 0) &= z_0 \text{ in } I, &\leer \vert z_x \vert^2 &= 1 \text{ in } I \times (0,T).
      \end{aligned}
    \end{gathered}
  \end{equation}
  
  Similar problems with elastic flows of curves have already been studied in a variety of different settings. A frequently studied problem is the gradient flow for the energy $\int_\Gamma (\vert \kappa \vert^2/2 + \lambda) \ ds$, where $\kappa$ denotes the curvature vector of the curve $\Gamma$, $\lambda \geq 0$ is a given constant and ds is the arc length element. Numerical schemes for this flow have been proposed and analyzed in~\cite{DKS02, BGN08, DD09, BGN10, BGN12, DN24}. Compared to this, the difference and main difficulty of $(\ref{equation:strong problem})$ lies in the inextensibility constraint $\vert z_x \vert^2 = 1$. A related problem that involves a pointwise constraint on the solution rather than its first order derivative, is the harmonic map heat flow for which a numerical scheme and an error estimate have recently been derived in~\cite{BKW24}.
  A numerical scheme for the approximation of the elastic flow of inextensible curves has been devised in~\cite{Bar13}, see also~\cite{Wal16, BRR18, BR21} for the case of self-avoiding inextensible curves.
    
  The scheme in~\cite{Bar13} uses piecewise cubic $C^1$ functions subject to a partition of $I$ and imposes the inextensibility constraint nodewise, i.e. $\I_{h,1}(\vert z_{hx} \vert^2 - 1) = 0$, where $\I_{h,1}$ is the nodal $\P_1$ interpolant. The time discretization then linearizes this constraint and it is shown in~\cite{Bar13} that the resulting scheme is unconditionally stable and convergent in the sense that every accumulation point of the sequence generated by the scheme solves $(\ref{equation:euler_lagrange})$.
  In this paper we are interested in deriving error estimates for a semi-discrete version of the approach developed in~\cite{Bar13}. In numerical experiments one observes a linear experimental convergence rate for the $H^2$ error, which is suboptimal since the corresponding interpolation error is of quadratic order.
  
  The reason for this suboptimal convergence rate is that the discrete constraint $\I_{h,1}(\vert z_{hx} \vert^2 - 1) = 0$ is too weak. It is a well known property of the nodal $\P_1$ interpolant $\I_{h,1}$ that it minimizes the Dirichlet energy for given values at the nodes, i.e. for all $v \in H^1(I)^d$ we have
  \begin{align*}
    \int_I \vert (\I_{h,1} v)' \vert^2 \ dx \leq \int_I \vert v' \vert^2 \ dx.
  \end{align*}
  Thus, if $u \in H^2(I)^d$ satisfies the discrete arc-length constraint $\vert u'(x_i) \vert^2 = 1$ for all $i = 0,...,M$, with $v = u'$ and $w(x) := \int_a^x \I_{h,1} v \ d\sigma$ we have $E(w) \leq E(u)$ and $w'(x_i) = u'(x_i)$ for all $i = 0,...,M$. Therefore solutions to the discrete minimization problem are piecewise quadratic and the linear convergence rate in $H^2$ is optimal. This can be improved by enforcing the arc-length constraint not just at the endpoints of each subinterval, but also at their midpoints, i.e. requiring that $\I_{h,2}(\vert z_{hx} \vert^2 - 1) = 0$ where $\I_{h,2}$ is the nodal $\P_2$-interpolant.
  The goal of this paper is to derive a quasi-optimal error estimate for a semi-discrete gradient flow using this improved discrete constraint and to verify the results using numerical simulations.
  
  % Notation
  \subsection{Notation}
  The following notation will be used throughout this paper. Let $\overline I = \bigcup_{i = 1}^M [x_{i-1}, x_i]$ be a dissection of the interval $I = (a,b) \subset \real$ with $a = x_0 < x_1 < ... < x_M = b$. We set $I_i := [x_{i-1}, x_i]$, $h_i = x_i - x_{i-1}$, $h = \max_i h_i$, $\T_h = \{I_i \ \vert \ i = 1,...,M\}$ and assume that there exists $c > 0$ such that $h \leq c h_i$ for $i = 1,...,M$. We then define the finite element spaces
  \begin{align*}
    \S^{k,l}(\T_h) := \{v_h \in C^l(\overline I) \ \vert \ v_h\vert_J \in \P_k \text{ for all } J \in \T_h \} \subset H^{l+1}(I).
  \end{align*}
  To deal with boundary values, for $\Gamma_D, \Gamma_D' \subset \del I$ we also define the Sobolev spaces with vanishing boundary values
  \begin{gather*}
    H^2_D(I) := \{ v \in H^2(I)\ \vert \ v\vert_{\Gamma_D} = 0,\ 
    v'\vert_{\Gamma_D'} = 0\}, \\
    H^1_D(I) := \{ v \in H^1(I)\ \vert \ v\vert_{\Gamma_D} = 0 \}, \leer
    H^1_{D'}(I) := \{ v \in H^1(I)\ \vert \ v\vert_{\Gamma_D'} = 0 \}.
  \end{gather*}
  Analogously for the finite element spaces we set
  \begin{gather*}
    \S^{k,l}_D(\T_h) := \S^{k,l}(\T_h) \cap H^{l+1}_D(I).
  \end{gather*}
  We write $(\cdot , \cdot)$ and $\Vert \cdot \Vert$ for the $L^2$--product and norm and $D_h u$ for the elementwise weak derivative of a function $u$. Also for $i = 1,...,M$ we set $m_i := (x_{i-1} + x_i)/2$ the midpoint of the interval $I_i$ and define $\M_h(\T_h) := \{m_i\ \vert\ i=1,...,M\}$ as well as
  \begin{align*}
    \N_1(\T_h) := \{x_i \ \vert \ i = 0,...,M\}, \leer
    \N_2(\T_h) := \N_1(\T_h) \cup \M_h(\T_h),
  \end{align*}
  the sets of associated nodes for $\S^{1,0}(\T_h)$ and $\S^{2,0}(\T_h)$.
  We then define the cubic $C^1$ interpolant $\I_{h,3}: C^1(\overline I)^d \to \S^{3,1}(\T_h)^d$ and the continuous quadratic and linear interpolants $\I_{h,2}: C^0(\overline I)^d \to \S^{2,0}(\T_h)^d$, $\I_{h,1}: C^0(\overline I)^d \to \S^{1,0}(\T_h)^d$ via the identities
  \begin{gather*}
    \I_{h,3}v(z) = v(z),\ (\I_{h,3}v)'(z) = v'(z) 
    \kurz \text{for all } z \in \N_1(\T_h), \\
    \I_{h,2}v(z) = v(z)
    \kurz \text{for all } z \in \N_2(\T_h), \\
    \I_{h,1}v(z) = v(z) 
    \kurz \text{for all } z \in \N_1(\T_h).
  \end{gather*}
  One important property of $\I_{h,3}$ is that for $\Gamma_D, \Gamma_D' \neq \emptyset$ it defines an orthogonal projection from $H^2_D(I)^d$ onto $\S^{3,1}_D(\T_h)^d$ with respect to the scalar product $(v,w)_{H^2_D(I)^d} := \int_I v'' \cdot w'' \ dx$, see Lemma \ref{lemma:rhs_interpolation}. Further, we introduce another interpolant $\J_{h,3}: C^1(\overline I)^d \to \S^{3,1}(I)^d$ defined via
  \begin{equation*}
    \J_{h,3}v(x) = v(a) + \int_a^x \I_{h,2}v' \ d\sigma.
  \end{equation*}
  We note that, according to Lemma \ref{lemma:interpolation_estimate}, $\J_{h,3}$ satisfies essentially the same interpolation estimate as $\I_{h,3}$, although under slightly stricter regularity conditions.
  The crucial advantage of $\J_{h,3}$ is that it preserves the values of $v'$ not just at the endpoints of each subinterval, but also at the midpoints. The disadvantage of this interpolant is that it does not preserve boundary values at the endpoint $b$ of the interval, i.e. in general we have $(\J_{h,3}v)(b) \neq v(b)$. This is also one of the reasons why the case $\Gamma_D = \del I$ is excluded from the error estimate. 
  
  % error estimate
  \section{Error estimate}
  \label{section:error_estimate_flow}
  In this section we derive an error estimate for the semi-discrete elastic flow.
  For this we first linearize the arc-length constraint. We note that a function $z \in H^1((0,T);C^1(I)^d)$ satisfies the arc-length constraint $\vert z_x(t) \vert^2 = 1$ for all $t \in (0,T)$ if and only if
  \begin{align*}
    \vert z_x(0) \vert^2 = 1, \leer 0 = \frac{1}{2} \frac{d}{dt} \vert z_x \vert^2
    = z_{tx} \cdot z_x.
  \end{align*}
  Analogously a function $z_h \in H^1((0,T);\S^{3,1}(\T_h)^d)$ satisfies the discrete arc-length constraint $\I_{h,2}(\vert z_x \vert^2-1) = 0$ if and only if
  \begin{align*}
    \I_{h,2}(\vert z_{hx}(0) \vert^2 - 1) = 0, \leer
    0 = \frac{1}{2} \frac{d}{dt} \I_{h,2}(\vert z_{hx} \vert^2 - 1)
    = \I_{h,2}(z_{htx} \cdot z_{hx}).
  \end{align*}
  Thus for $z \in H^2(I)^d$ and $z_h \in \S^{3,1}(\T_h)^d$ we set
  \begin{align*}
    \G(z) := \{v \in H^2_D(I)^d \ \vert \ z' \cdot v' = 0 \text{ in } I \}, \kurz
    \G_h(z_h) := \{ v_h \in \S^{3,1}_D(\T_h)^d \ \vert \ \I_{h,2}(z_h' \cdot v_h') = 0 \text{ in } I \}.
  \end{align*}
  This allows us to reformulate the definition of the elastic flow: A function $z \in H^1((0,T);L^2(I)^d) \cap L^\infty([0,T];H^2(I)^d)$ is a solution to the elastic flow $(\ref{equation:elastic_flow_pde})$ if and only if $z$ satisfies $z(0) = z_0$, $z_t(t) \in \G(z(t))$ for almost all $t \in (0,T)$ and
  \begin{equation*}
    0 = \int_I z_t \cdot y + z_{xx} \cdot y_{xx} \ dx
  \end{equation*}
  for all $y \in \G(z(t))$ and almost all $t \in (0,T)$. Analogously we now define the semi-discrete bending problem: We call a function $z_h \in H^1((0,T);\S^{3,1}(\T_h)^d)$ a solution to the semi-discrete elastic flow if and only if $z_h$ satisfies $z_h(0) = \J_{h,3} (z_0),\ 
  z_{ht}(t) \in \G_h(z_h(t))$ for almost all $t \in (0,T)$ and 
  \begin{equation}\label{equation:semi_discrete_scheme}
    \int_I z_{ht}(t) \cdot y_h + z_{hxx}(t) \cdot y_{hxx} \ dx = 0
  \end{equation}
  for all $y_h \in \G_h(z_h(t))$ and almost all $t \in (0,T)$. Note that the conditions $z_t(t) \in \G(z(t))$ and $z_{ht}(t) \in \G_h(z_h(t))$ also include the boundary conditions.
  For now we will just assume, that the semi-discrete problem has a solution satisfying 
  \begin{equation}
    \label{equation:discrete_boundedness}
    \max_{t \in [0,T]} \max_{i = 1,...,M} \Vert z_h(t) \Vert_{W^{3,\infty}(I_i)} \leq c,
  \end{equation}
  where $c$ is independent of $h$.
  A justification for this assumption will be given later on.\\
  The crucial step to obtain an error estimate is to construct suitable test functions for both, the continuous and the semi-discrete problems. For the corresponding linear problem, the standard approach is to test the continuous problem with the approximation error $z_t - z_{ht}$ and the discrete problem with its interpolant $\I_{h,3} z_t - z_{ht}$. This however does not work in this case as neither of these functions satisfies the required constraints. To still be able to test with these approximation errors we introduce the following correction terms
  \begin{align*}
    \delta(x,t) &:= \int_a^x ((z_{tx} - z_{htx}) \cdot z_x ) z_x \ d\sigma
    = - \int_a^x (z_{htx} \cdot z_x) z_x \ d\sigma, \\
    \delta_h(x,t) &:= \int_a^x \I_{h,2}((((\I_{h,3} z_t)_x - z_{htx}) \cdot z_{hx}) z_{hx}) \ d\sigma
    = \int_a^x \I_{h,2}(((\I_{h,3} z_t)_x \cdot z_{hx}) z_{hx}) \ d\sigma,
  \end{align*}
  and set $y := z_t - z_{ht} - \delta \in H^2_D(I)^d$, $y_h := \I_{h,3} z_t - z_{ht} - \delta_h \in \S^{3,1}_D(\T_h)^d$.
  Since $\vert z_x \vert^2 = 1$ we have
  \begin{align*}
    y_x \cdot z_x
    = (z_{tx} - z_{htx}) \cdot z_x - \delta_x \cdot z_x
    = (z_{tx} - z_{htx}) \cdot z_x 
    - ((z_{tx} - z_{htx}) \cdot z_x) \vert z_x \vert^2
    = 0.
  \end{align*}
  Similarly, since $\I_{h,2}(\vert z_{hx} \vert^2) = 1$ we have
  \begin{align*}
    \I_{h,2}(y_{hx} \cdot z_{hx})
    &= \I_{h,2}((\I_{h,3} z_t - z_{ht})_x \cdot z_{hx} 
    - \delta_{hx} \cdot z_{hx}) \\
    &= \I_{h,2}((\I_{h,3} z_t - z_{ht})_x \cdot z_{hx} 
    - \I_{h,2}((((\I_{h,3} z_t)_x - z_{htx}) \cdot z_{hx}) z_{hx}) 
    \cdot z_{hx}) \\
    &= \I_{h,2}( (\I_{h,3} z_t - z_{ht})_x \cdot z_{hx} - ((\I_{h,3} z_t - 
    z_{htx})_x \cdot z_{hx}) \I_{h,2} \vert z_{hx} \vert^2) \\
    &= 0.
  \end{align*}
  Therefore $y(t) \in \G(z(t))$ and $y_h(t) \in \G_h(z_h(t))$ are admissible test functions for the continuous and semi-discrete problem, respectively. We further set
  \begin{align*}
    \delta^h(x,t) &:= \int_a^x ((\I_{h,3} z_t)_x \cdot z_{hx}) z_{hx} \ d\sigma.
  \end{align*}
  Next we show some crucial properties of $\delta^h$ and $\delta_h$.
  
  \begin{lemma}\label{lemma:delta_estimates}
    Assume that $z \in L^\infty((0,T);H^2(I)^d)$ with $z_t \in L^\infty((0,T);H^4(I)^d)$ and that $z_h$ satisfies $(\ref{equation:discrete_boundedness})$.
    The functions $\delta_h$ and $\delta^h$ then satisfy for all $t \in [0,T]$
    \begin{equation}\label{equation:delta_difference}
      \max_{x \in I} \vert \delta_h(x,t) - \delta^h(x,t) \vert \leq c h^4.
    \end{equation}
    Further for $\delta^h$ we have for all $t \in [0,T]$
    \begin{align}
      \label{equation:delta_sup}
      \Vert \delta^h(t) \Vert_{L^\infty(I)^d} 
      &\leq ch^3 + c \Vert z_x(t) - z_{hx}(t) \Vert, \\
      \label{equation:delta_derivative}
      \Vert \delta^h_{xx}(t) \Vert
      &\leq c h^2 + c \Vert z(t) - z_h(t) \Vert_{H^2(I)^d}.
    \end{align}
  \end{lemma}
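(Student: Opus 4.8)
The plan is to prove the three estimates in turn, in each case comparing the discrete quantity with its fully continuous analogue. Throughout I would use that $z$ obeys the arc-length constraint, so that $z_{tx}\cdot z_x=0$, and that $z_t\in H^4(I)^d\hookrightarrow C^3(\overline I)^d$; the latter, combined with standard interpolation estimates, makes $\I_{h,3}z_t$ bounded in $W^{3,\infty}(I_i)^d$ uniformly in $h$ and $t$, so that together with $(\ref{equation:discrete_boundedness})$ all elementwise products of the relevant derivatives of $\I_{h,3}z_t$ and of $z_h$ are bounded independently of $h$ and $t$.

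For $(\ref{equation:delta_difference})$ I would write $\delta_h(x,t)-\delta^h(x,t)=\int_a^x(\I_{h,2}g-g)\,d\sigma$ with $g:=((\I_{h,3}z_t)_x\cdot z_{hx})z_{hx}$, which on each $I_i$ is a polynomial of degree at most six with $\Vert g^{(4)}\Vert_{L^\infty(I_i)}\le c$. Since $\I_{h,2}g$ is the quadratic interpolant of $g$ at $x_{i-1},m_i,x_i$, its integral over $I_i$ is exactly the one-panel Simpson quadrature of $\int_{I_i}g\,d\sigma$, whose error is bounded by $ch_i^5\Vert g^{(4)}\Vert_{L^\infty(I_i)}$; summing these over all full elements contained in $[a,x]$ and adding the crude bound $h_j\Vert\I_{h,2}g-g\Vert_{L^\infty(I_j)}\le ch_j^4$ on the at most one partial element then gives $(\ref{equation:delta_difference})$. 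The key point is this superconvergence of $\I_{h,2}$ on the symmetric three-point stencil; the naive estimate $h_i\Vert\I_{h,2}g-g\Vert_{L^\infty(I_i)}\le ch_i^4$ would only produce $O(h^3)$.

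For $(\ref{equation:delta_sup})$, using $z_{tx}\cdot z_x=0$ I would split the integrand of $\delta^h$ as
\begin{equation*}
((\I_{h,3}z_t)_x\cdot z_{hx})z_{hx}=\bigl(((\I_{h,3}z_t)_x-z_{tx})\cdot z_{hx}\bigr)z_{hx}+\bigl(z_{tx}\cdot(z_{hx}-z_x)\bigr)z_{hx}.
\end{equation*}
The integral of the second term is bounded by $c\Vert z_x-z_{hx}\Vert$, since $z_{tx}$ and $z_{hx}$ are bounded in $L^\infty(I)$ uniformly in $h$. For the first term I would integrate by parts elementwise: because $\I_{h,3}z_t-z_t$ and its derivative vanish at all interior nodes, the boundary contributions collapse and one is left with integrals of the form $\int_{I_i}(\I_{h,3}z_t-z_t)\cdot G_h\,d\sigma$ with $\Vert G_h\Vert_{L^\infty(I_i)}\le c$, which by the interpolation estimate $\Vert\I_{h,3}z_t-z_t\Vert_{L^2(I_i)}\le ch_i^4\Vert z_t\Vert_{H^4(I_i)}$ sum to $\le ch^4\Vert z_t\Vert_{H^4(I)}$, together with a single boundary term from the partial element bounded by $ch^3$. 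Without this integration by parts one would only control $\Vert(\I_{h,3}z_t-z_t)_x\Vert_{L^\infty}$ by $O(h^{5/2})$, which is insufficient.

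For $(\ref{equation:delta_derivative})$ I would differentiate,
\begin{equation*}
\delta^h_{xx}=((\I_{h,3}z_t)_{xx}\cdot z_{hx})z_{hx}+((\I_{h,3}z_t)_x\cdot z_{hxx})z_{hx}+((\I_{h,3}z_t)_x\cdot z_{hx})z_{hxx},
\end{equation*}
and note that the sum of the three corresponding continuous expressions, obtained by replacing $\I_{h,3}z_t$ by $z_t$ and $z_h$ by $z$, equals $(z_{tx}\cdot z_x)_xz_x+(z_{tx}\cdot z_x)z_{xx}=0$. Subtracting and estimating the three differences, I would peel off one factor at a time: each factor $\I_{h,3}z_t-z_t$ is bounded in $L^2(I_i)$ by $ch_i^{4-k}\Vert z_t\Vert_{H^4(I_i)}$ for its $k$-th derivative, so after summation these contributions are $\le ch^2\Vert z_t\Vert_{H^4(I)}$, while each factor $z_{hx}-z_x$ or $z_{hxx}-z_{xx}$ is bounded in $L^2(I)$ by $\Vert z-z_h\Vert_{H^2(I)^d}$. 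The main obstacle, and where care is needed, is that $z_{xx}$ lies only in $L^2(I)$: one must retain the $L^\infty$-bounded discrete derivatives $z_{hx}$ and $z_{hxx}$ in every intermediate step and introduce $z_{xx}$ only when it is multiplied by one of the $L^\infty$-bounded factors $z_{tx}$ or $z_x$, thereby producing the controllable difference $z_{hxx}-z_{xx}$. Replacing $z_{hxx}$ by $z_{xx}$ prematurely would leave a term of the form $z_{xx}$ times a quantity that is $O(1)$ but not small, which cannot be absorbed into the right-hand side.
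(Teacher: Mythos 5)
Your proposal is correct and follows essentially the same route as the paper for all three estimates: Simpson-rule superconvergence on full elements plus a crude bound on the partial element for (\ref{equation:delta_difference}), the splitting based on $z_{tx}\cdot z_x=0$ for (\ref{equation:delta_sup}), and the product rule with add-and-subtract and the cancellation $\del_x(z_{tx}\cdot z_x)=0$ for (\ref{equation:delta_derivative}). The only deviation is that in (\ref{equation:delta_sup}) your elementwise integration by parts is unnecessary, and the reason you give for it is mistaken: the term is an integral, so it suffices to bound $\Vert (\I_{h,3}z_t)_x - z_{tx}\Vert_{L^2(I)}\leq ch^3\vert z_t\vert_{H^4(I)^d}$ directly (no $L^\infty$ control of the derivative is needed), which is exactly what the paper does and already yields the claimed $ch^3$.
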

  
  \begin{proof}
    According to Lemma \ref{lemma:new simpson rule} we have for all $i \in \{0,...,M\}$
    \begin{align*}
      \vert \delta_h(x_i) - \delta^h(x_i) \vert
      &= \left\vert \int_a^{x_i} ((\I_{h,3} z_t)_x \cdot z_{hx}) z_{hx} \ d\sigma
      - \int_a^{x_i} \I_{h,2}(((\I_{h,3} z_t)_x 
      \cdot z_{hx}) z_{hx}) \ d\sigma \right\vert \\
      &\leq c h^4 \Vert D_h^4(((\I_{h,3} z_t)_x \cdot z_{hx}) z_{hx}) \Vert_{L^\infty(I)^d}
      \leq c h^4.
    \end{align*}
    In the last estimate we have also used the assumption $(\ref{equation:discrete_boundedness})$.
    Therefore for $x \in I_i$ we obtain
    \begin{align*}
      \vert \delta_h(x) - \delta^h(x) \vert
      &\leq \vert \delta_h(x_{i-1}) - \delta^h(x_{i-1}) \vert
      + c h \max_{I_i} \vert \delta_{hx} - \delta^h_x \vert \\
      &\leq c h^4 + c h \max_{I_i} \vert ((\I_{h,3} z_t)_x \cdot z_{hx}) z_{hx}
      - \I_{h,2}(((\I_{h,3} z_t)_x \cdot z_{hx}) z_{hx}) \vert.
    \end{align*}
    Now the interpolation estimate from Lemma \ref{lemma:interpolation_estimate} and $(\ref{equation:discrete_boundedness})$ yield $(\ref{equation:delta_difference})$.
    Let now $x \in I$ arbitrary. Using $z_{tx} \cdot z_x = 0$ we obtain
    \begin{align*}
      \vert \delta^h(x) \vert
      &\leq c \int_I \vert (\I_{h,3} z_t)_x \cdot z_{hx} \vert \ d\sigma
      \leq c \int_I \vert ((\I_{h,3} z_t)_x - z_{tx}) \cdot z_{hx} \vert
      + \vert z_{tx} \cdot (z_{hx} - z_x) \vert \ d\sigma \\
      &\leq c \Vert (\I_{h,3} z_t)_x - z_{tx} \Vert
      + c \Vert z_x - z_{hx} \Vert
      \leq c h^3 + c \Vert z_x - z_{hx} \Vert.
    \end{align*}
    This proves (\ref{equation:delta_sup}).
    To prove the last inequality we calculate
    \begin{align*}
      \delta^h_{xx}
      &= ((\I_{h,3} z_t)_{xx} \cdot z_{hx}) z_{hx}
      + ((\I_{h,3} z_t)_x \cdot z_{hxx}) z_{hx}
      + ((\I_{h,3} z_t)_x \cdot z_{hx}) z_{hxx} \\
      &= (((\I_{h,3} z_t)_{xx} - z_{txx}) \cdot z_{hx}) z_{hx}
      + (z_{txx} \cdot (z_{hx} - z_x)) z_{hx} + (z_{txx} \cdot z_x) z_{hx} \\
      &+ (((\I_{h,3} z_t)_x - z_{tx}) \cdot z_{hxx}) z_{hx}
      + (z_{tx} \cdot (z_{hxx} - z_{xx})) z_{hx} + (z_{tx} \cdot z_{xx}) z_{hx} \\
      &+ (((\I_{h,3} z_t)_x - z_{tx}) \cdot z_{hx}) z_{hxx}
      + (z_{tx} \cdot (z_{hx} - z_x)) z_{hxx}.
    \end{align*}
    Using $z_{txx} \cdot z_x + z_{tx} \cdot z_{xx} = \del_x (z_{tx} \cdot z_x) 
    = 0$ and Hölder's inequality, we obtain
    \begin{align*}
      \Vert \delta^h_{xx} \Vert_{L^2(I)^d}
      &\leq \Vert (\I_{h,3} z_t)_{xx} - z_{txx} \Vert 
      \Vert z_{hx} \Vert_{L^\infty(I)^d}^2
      + \Vert z_{txx} \Vert \Vert z_{hx} - z_x \Vert_{L^\infty(I)^d} 
      \Vert z_{hx} \Vert_{L^\infty(I)^d} \\
      &+ \Vert (\I_{h,3} z_t)_x - z_{tx} \Vert_{L^\infty(I)^d}
      \Vert z_{hxx} \Vert \Vert z_{hx} \Vert_{L^\infty(I)^d}
      + \Vert z_{tx} \Vert_{L^\infty(I)^d} \Vert z_{hxx} - z_{xx} \Vert
      \Vert z_{hx} \Vert_{L^\infty(I)^d} \\
      &+ \Vert (\I_{h,3} z_t)_x - z_{tx} \Vert_{L^\infty(I)^d}
      \Vert z_{hx} \Vert_{L^\infty(I)^d} \Vert z_{hxx} \Vert
      + \Vert z_{tx} \Vert_{L^\infty(I)^d} \Vert z_{hx} - z_{x} \Vert_{L^\infty(I)^d}
      \Vert z_{hxx} \Vert \\
      &\leq c \Vert z_t - \I_{h,3} z_t \Vert_{H^2(I)^d} 
      + c \Vert z - z_h \Vert_{H^2(I)^d}.
    \end{align*}
    For the last estimate we have also used the continuity of the embedding $H^1(I)^d \embeds C^0(I)^d$ as well as $(\ref{equation:discrete_boundedness})$.
    Finally an interpolation estimate proves
    \begin{align*}
      \Vert \delta^h_{xx} \Vert \leq c h^2 + c \Vert z - z_h \Vert_{H^2(I)^d}
    \end{align*}
    which proves (\ref{equation:delta_derivative}).
  \end{proof}
  
  We are now able to bound the approximation error of the semi-discrete scheme in $H^1([0,T];L^2(I)^d) \cap L^\infty([0,T];H^2(I)^d)$.
  
  \begin{theorem}[error estimate]
    \label{theorem:error_estimate}
    Let $z \in C^0([0,T];H^4(I)^d)$ be a solution of the continuous elastic flow $(\ref{equation:elastic_flow_pde})$ with $z_t \in L^\infty((0,T); H^4(I)^d)$. Further assume that the Lagrange multiplier $$\lambda = -z_x \cdot \int_x^b z_t \ d\sigma - \vert z_{xx} \vert^2$$ satisfies $\lambda \in W^{1,\infty}((0,T);W^{1,1}(I))$. Lastly let $z_h$ be a solution to the semi-discrete scheme $(\ref{equation:semi_discrete_scheme})$ that satisfies $(\ref{equation:discrete_boundedness})$. Then there exists $h_0 > 0$ such that for all $0 < h \leq h_0$ we have the error estimate
    \begin{equation}
      \label{equation:error_estimate}
      \Vert z_t - z_{ht} \Vert_{L^2([0,T];L^2(I)^d)}^2
      + \Vert z - z_h \Vert_{L^\infty([0,T];H^2(I)^d)}^2
      \leq c h^4
    \end{equation}
    with a constant $c$ that is independent of $h$.
  \end{theorem}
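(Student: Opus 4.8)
\subsection*{Proof strategy}

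The plan is to run an energy estimate for the error $e := z - z_h$ using the admissible test functions constructed before the theorem. Test the continuous weak formulation (equivalently $(\ref{equation:euler_lagrange})$) with $y = z_t - z_{ht} - \delta \in \G(z(t))$ and the semi-discrete scheme $(\ref{equation:semi_discrete_scheme})$ with $y_h = \I_{h,3}z_t - z_{ht} - \delta_h \in \G_h(z_h(t))$, write $\I_{h,3}z_t - z_{ht} = (z_t - z_{ht}) - \rho$ with $\rho := z_t - \I_{h,3}z_t$, and subtract the two identities. Using $\int_I e_{xx}\cdot\partial_t e_{xx} = \tfrac12\frac{d}{dt}\|e_{xx}\|^2$ this gives
\[
\|z_t - z_{ht}\|^2 + \tfrac12\frac{d}{dt}\|z_{xx}-z_{hxx}\|^2 = T_\rho + T_\delta + T_{\delta_h},
\]
where $T_\rho = -\int_I z_{ht}\cdot\rho + z_{hxx}\cdot\rho_{xx}$, $T_\delta = \int_I z_t\cdot\delta + z_{xx}\cdot\delta_{xx}$ and $T_{\delta_h} = -\int_I z_{ht}\cdot\delta_h + z_{hxx}\cdot\delta_{hxx}$. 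After integrating over $[0,t]$ and noting $\|e(0)\|_{H^2(I)^d} = \|z_0 - \J_{h,3}z_0\|_{H^2(I)^d} \le ch^2$ by Lemma \ref{lemma:interpolation_estimate}, the goal is to bound $\int_0^t (T_\rho + T_\delta + T_{\delta_h})$ by $\varepsilon\int_0^t\|z_t - z_{ht}\|^2 + \varepsilon\sup_{[0,t]}\|e\|_{H^2(I)^d}^2 + C\int_0^t\|e\|_{H^2(I)^d}^2 + Ch^4$, after which a Gronwall argument closes the estimate.

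For $T_\rho$ I would insert $z_{ht} = z_t - (z_t - z_{ht})$ and $z_{hxx} = z_{xx} - e_{xx}$. The contribution of the ``$z$--part'', $-\int_I z_t\cdot\rho + z_{xx}\cdot\rho_{xx}$, is rewritten by testing the Euler--Lagrange equation $(\ref{equation:elastic_flow_pde})$ with $\rho \in H^2_D(I)^d$ (admissible, since the Dirichlet data are time-independent, hence $z_t \in H^2_D(I)^d$ and $\rho \in H^2_D(I)^d$), which yields $\int_I\lambda z_x\cdot\rho_x$; integrating by parts in $x$ the boundary terms vanish because $\rho(a) = \rho(b) = 0$ ($\I_{h,3}$ reproduces nodal values), and the interpolation estimates $\|\rho\|_{H^k(I)^d} \le ch^{4-k}|z_t|_{H^4(I)^d}$ together with the regularity of $\lambda$ bound the result by $ch^4$; alternatively one exploits here that $\I_{h,3}$ is the $H^2_D$--orthogonal projection (Lemma \ref{lemma:rhs_interpolation}). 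The ``$e$--part'', $\int_I (z_t - z_{ht})\cdot\rho + e_{xx}\cdot\rho_{xx}$, is absorbed by Young's inequality into $\varepsilon\|z_t - z_{ht}\|^2 + \tfrac18\|e_{xx}\|^2 + ch^4$.

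The main work, and where I expect the real difficulty, is $T_\delta + T_{\delta_h}$. One first checks, using the boundary conditions of $z$ and $z_h$ together with $|z_x|^2 = 1$ and $\I_{h,2}(|z_{hx}|^2) = 1$, that $\delta \in H^2_D(I)^d$ and $\delta_h \in \S^{3,1}_D(\T_h)^d$, so that $\delta$ may be used as a test function in $(\ref{equation:elastic_flow_pde})$ — turning $T_\delta$ into $-\int_I\lambda z_x\cdot\delta_x$ — and $\delta_h$ in the discrete Euler--Lagrange identity carrying a discrete Lagrange multiplier, giving the matching term for $T_{\delta_h}$. Using $\delta_x = ((z_t - z_{ht})_x\cdot z_x)z_x$, the identity $|z_x|^2 = 1$, the nodal identities $|z_{hx}(p)|^2 = 1$ for $p \in \N_2(\T_h)$, the defining formula for $\delta_h$, and Lemma \ref{lemma:delta_estimates} (to compare $\delta_h$ with $\delta^h$ and to control $\|\delta^h\|_{L^\infty(I)^d}$ and $\|\delta^h_{xx}\|$), these terms are reduced to expressions in which the key factors $z_{htx}\cdot z_{hx} = \tfrac12\partial_t(|z_{hx}|^2 - 1)$ and $|z_{hx}|^2 - 1$ have vanishing $\P_2$--interpolant; Simpson-rule identities and $(\ref{equation:discrete_boundedness})$ then supply the needed powers of $h$, while the $z_{ht}$--dependent factors are handled by writing $z_{ht} = z_t - (z_t - z_{ht})$ and absorbing the error part. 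Integration by parts in $x$ is used repeatedly, the boundary terms vanishing because at $a$ one has $e(a) = e_t(a) = 0$ and at $b$ either $z_{tx}(b) = 0$ (if $b \in \Gamma_D'$) or $\lambda(b) = -|z_{xx}(b)|^2 = 0$ (if $b \notin \Gamma_D'$). The time derivatives arising (notably from $\tfrac12\partial_t(|z_{hx}|^2 - 1)$) are collected into total $\frac{d}{dt}$--terms whose time integral is a difference of two time slices, bounded by $Ch^4 + \varepsilon\sup_{[0,t]}\|e\|_{H^2(I)^d}^2$; the remaining terms have the form $\varepsilon\|z_t - z_{ht}\|^2 + C\|e\|_{H^2(I)^d}^2 + Ch^4$, the constant depending on the norms of $\lambda$ and $\lambda_t$ (this is where $\lambda \in W^{1,\infty}((0,T);W^{1,1}(I))$ is used, and where the structural assumptions $\Gamma_D, \Gamma_D' \neq \emptyset$ and $\Gamma_D \neq \del I$ — which make the orthogonal projection, Poincaré's inequality and $\J_{h,3}$ available — enter). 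The bookkeeping required to keep every term either genuinely $O(h^4)$, or quadratic in the controlled error quantities, or a total time derivative, is the principal obstacle.

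Finally, integrating over $[0,t]$ and using that $e(t) \in H^2_D(I)^d$, so that Poincaré's inequality gives $\|e(t)\|_{H^2(I)^d} \le c\|e_{xx}(t)\|$, one takes the supremum over $t$, absorbs the $\varepsilon$--terms into the left-hand side for $\varepsilon$ small, and chooses $h_0$ small enough so that the remaining $h$--dependent coefficients are controlled; Gronwall's inequality applied to $t \mapsto \|e_{xx}(t)\|^2$ then yields $(\ref{equation:error_estimate})$.
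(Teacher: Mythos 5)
Your overall architecture coincides with the paper's: the same test functions $y$ and $y_h$, the splitting off of $\rho = z_t - \I_{h,3}z_t$, the identification of the surviving $\lambda$-contribution with $\int_I\lambda\,\del_t(z_x\cdot z_{hx}-1)\,dx$, the total-time-derivative trick combined with the identity $z_x\cdot z_{hx}-1 = -\tfrac12|z_x-z_{hx}|^2+\tfrac12\bigl(|z_{hx}|^2-\I_{h,2}|z_{hx}|^2\bigr)$, the Simpson-rule bounds and Gr\"onwall. One small remark on $T_\rho$: the route you list as an ``alternative'' (Lemma \ref{lemma:rhs_interpolation}, which kills $\int_I z_{hxx}\cdot\rho_{xx}\,dx$ exactly) is the one that actually delivers $O(h^4)$; your primary route of testing $(\ref{equation:elastic_flow_pde})$ with $\rho$ and integrating by parts only gives $\|\rho\|_{L^\infty}\lesssim h^{7/2}$ under the stated $H^4$ regularity of $z_t$, which is half an order short.

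The genuine gap is your treatment of $T_{\delta_h}$. You propose to insert $\delta_h$ into ``the discrete Euler--Lagrange identity carrying a discrete Lagrange multiplier''. No such identity is available: the semi-discrete scheme $(\ref{equation:semi_discrete_scheme})$ only gives information for test functions in $\G_h(z_h(t))$, and $\delta_h\notin\G_h(z_h(t))$. To extend it you would first have to construct a discrete multiplier $\lambda_h$ and, more seriously, prove an $h$-uniform bound on it in a norm strong enough to pair against $\I_{h,2}(z_{hx}\cdot\delta_{hx})$; such a bound is not known a priori and is essentially as hard as the error estimate itself. The paper avoids the issue by introducing the un-interpolated quantity $\delta^h(x)=\int_a^x((\I_{h,3}z_t)_x\cdot z_{hx})z_{hx}\,d\sigma$: Lemma \ref{lemma:delta_estimates} together with the improved Simpson rule (Lemma \ref{lemma:new simpson rule}, needed in full strength for the term $\int_I z_{xxx}\cdot(\delta_{hx}-\delta^h_x)\,dx$ after an integration by parts using $\delta_{hx}=\delta^h_x$ on $\del I$) shows that replacing $\delta_h$ by $\delta^h$ costs only $O(h^4)$ plus quantities quadratic in the error, and the leftover $-\int_I z_t\cdot\delta^h+z_{xx}\cdot\delta^h_{xx}\,dx$ combines with $T_\delta$ into $\int_I z_t\cdot(\delta-\delta^h)+z_{xx}\cdot(\delta_{xx}-\delta^h_{xx})\,dx$ with $\delta-\delta^h\in H^2_D(I)^d$, so that only the \emph{continuous} multiplier $\lambda$ --- whose regularity is assumed --- ever appears. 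Without this (or an equivalent) device your plan does not close; once the substitution is made, the remainder of your sketch matches the paper's computations $(\ref{equation:integral_term_I})$--$(\ref{equation:z_h_L2_estimate})$.
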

  
  \begin{proof}
    By definition, $z_t - z_{ht}$ and $\I_{h,3} z_t - z_{ht}$ satisfy
    \begin{align*}
      z_t - z_{ht} = y + \delta, \leer \I_{h,3} z_t - z_{ht} = y_h + \delta_h.
    \end{align*}
    We therefore get
    \begin{gather*}
      \int_I \vert z_t - z_{ht} \vert^2 \ dx
      + \frac{1}{2} \frac{d}{dt} \int_I \vert z_{xx} - z_{hxx} \vert^2 \ dx
      = \int_I z_t \cdot (y + \delta) + z_{xx} \cdot (y_{xx} + \delta_{xx}) \ dx\\
      \kurz - \int_I z_{ht} \cdot (z_t - \I_{h,3} z_t + y_h + \delta_h)
      + z_{hxx} \cdot (z_{txx} - (\I_{h,3} z_t)_{xx} + y_{hxx} + \delta_{hxx}) \ dx.
    \end{gather*}
    Using $\int_I z_t \cdot y + z_{xx} \cdot y_{xx} = 0$, $\int_I z_{ht} \cdot y_h
    + z_{hxx} \cdot y_{hxx} = 0$ and Lemma \ref{lemma:rhs_interpolation} we obtain
    \begin{equation}\label{equation:I}
      \begin{aligned}
        &\int_I \vert z_t - z_{ht} \vert^2 \ dx
        + \frac{1}{2} \frac{d}{dt} \int_I \vert z_{xx} - z_{hxx} \vert^2 \ dx \\
        &\kurz = \int_I z_t \cdot \delta + z_{xx} \cdot \delta_{xx} \ dx
        - \int_I z_{ht} \cdot \delta_h + z_{hxx} \cdot \delta_{hxx} \ dx \\
        &\kurz - \int_I z_t \cdot (z_t - \I_{h,3} z_t)
        + (z_{ht} - z_t) \cdot (z_t - \I_{h,3} z_t) \ dx.
      \end{aligned}
    \end{equation}
    With Hölder's inequality and the $\varepsilon$-Young-inequality we 
    can estimate
    \begin{equation}\label{equation:other_terms}
      \begin{aligned}
        \left\vert \int_I z_t \cdot (z_t - \I_{h,3} z_t) \ dx \right\vert
        &\leq c h^4, \\
        \left\vert \int_I (z_{ht} - z_t) \cdot (z_t - \I_{h,3} z_t) \ dx \right\vert
        &\leq \varepsilon \Vert z_t - z_{ht} \Vert^2 + c_\varepsilon h^8.
      \end{aligned}
    \end{equation}
    Also with the help of Lemma \ref{lemma:delta_estimates} we can estimate
    \begin{equation}\label{equation:time_estimate}
      \begin{aligned}
        - \int_I z_{ht} \cdot \delta_h \ dx
        &= - \int_I (z_t +z_{ht} - z_t) \cdot (\delta_h - \delta^h) \ dx
        - \int_I (z_{ht} - z_t) \cdot \delta^h \ dx - \int_I z_t \cdot \delta^h \ dx \\
        &\leq (\Vert z_t \Vert + \Vert z_t - z_{ht} \Vert) 
        \Vert \delta_h - \delta^h \Vert
        + \Vert z_t - z_{ht} \Vert \Vert \delta^h \Vert 
        - \int_I z_t \cdot \delta^h \ dx \\
        &\leq c h^4 + \varepsilon \Vert z_t - z_{ht} \Vert^2 + c_\varepsilon h^8
        + c_\varepsilon h^4 + c_\varepsilon \Vert z_x - z_{hx} \Vert^2 
        - \int_I z_t \cdot \delta^h \ dx\\
        &= c_\varepsilon h^4 + \varepsilon \Vert z_t - z_{ht} \Vert^2 
        + c_\varepsilon \Vert z_x - z_{hx} \Vert^2
        - \int_I z_t \cdot \delta^h \ dx.
      \end{aligned}
    \end{equation}
    For the last remaining term in (\ref{equation:I}) we get
    \begin{equation}\label{equation:estimate_S}
      \begin{aligned}
        - \int_I z_{hxx} \cdot \delta_{hxx} \ dx
        &= \int_I (z_{xx} - z_{hxx}) \cdot (\delta_{hxx} - \delta^h_{xx}) \ dx
        + \int_I (z_{xx} - z_{hxx}) \cdot \delta^h_{xx} \ dx\\
        &\kurz - \int_I z_{xx} \cdot (\delta_{hxx} - \delta^h_{xx}) \ dx
        - \int_I z_{xx} \cdot \delta^h_{xx} \ dx \\
        &=: S_1 + S_2 + S_3 - \int_I z_{xx} \cdot \delta^h_{xx} \ dx.
      \end{aligned}
    \end{equation}
    $S_1$ and $S_2$ we can easily estimate using Hölder's inequality, Lemma
    \ref{lemma:delta_estimates} and $(\ref{equation:discrete_boundedness})$:
    \begin{align*}
      \vert S_1 \vert 
      &\leq \Vert z_{hxx} - z_{xx} \Vert \Vert \delta_{hxx} - \delta^h_{xx} \Vert \\
      &= \Vert z_{xx} - z_{hxx} \Vert
      \Vert (((\I_{h,3} z_t)_x \cdot z_{hx}) z_{hx} 
      - \I_{h,2}(((\I_{h,3} z_t)_x \cdot z_{hx}) z_{hx}))_x \Vert \\
      &\leq c h^2 \Vert z_{xx} - z_{hxx} \Vert 
      \leq c h^4 + c \Vert z - z_h \Vert_{H^2(I)^d}^2, \\
      \vert S_2 \vert
      &\leq \Vert z_{hxx} - z_{xx} \Vert \Vert \delta^h_{xx} \Vert
      \leq c h^2 \Vert z_{xx} - z_{hxx} \Vert
      + c \Vert z - z_h \Vert_{H^2(I)^d}^2
      \leq c h^4 + c \Vert z - z_h \Vert_{H^2(I)^d}^2.
    \end{align*}
    To get an estimate for $S_3$ we first note that by definition we have $\delta_{hx} = \I_{h,2}(\delta^h_x)$, thus we have $\delta_{hx} = \delta^h_x$ on $\del I$ and integration by parts yields
    \begin{align*}
      S_3 
      &= - \int_I z_{xx} \cdot (\delta_{hxx} - \delta^h_{xx}) \ dx
      = \int_I z_{xxx} \cdot (\delta_{hx} - \delta^h_{x}) \ dx \\
      &= -\int_I z_{xxx} \cdot (((\I_{h,3} z_t)_x \cdot z_{hx}) z_{hx}
      - \I_{h,2}(((\I_{h,3} z_t)_x \cdot z_{hx}) z_{hx})) \ dx.
    \end{align*}
    We now apply Lemma \ref{lemma:new simpson rule} to obtain
    \begin{align*}
      S_3
      &\leq c h^4 \Vert z_{xxx} \Vert_{L^1(I)^d} \Vert D_h^4 (((\I_{h,3} z_t)_x \cdot z_{hx})z_{hx}) \Vert_{L^\infty(I)^d} \\
      &+ c h^4\Vert z_{xxxx} \Vert_{L^1(I)^d} \Vert D_h^3(((\I_{h,3} z_t)_x \cdot z_{hx}) z_{hx}) \Vert_{L^\infty(I)^d} \\
      &\leq c h^4.
    \end{align*}
    Inserting those estimates into (\ref{equation:estimate_S}) yields
    \begin{equation}\label{equation:space_estimate}
      - \int_I z_{hxx} \cdot \delta_{hxx} \ dx
      \leq c h^4 + c \Vert z - z_h \Vert_{H^2(I)^d}^2 
      - \int_I z_{xx} \cdot \delta^h_{xx} \ dx.
    \end{equation}
    Combining $(\ref{equation:I}) - (\ref{equation:space_estimate})$ results in
    \begin{align*}
      &\int_I \vert z_t - z_{ht} \vert^2 \ dx
      + \frac{1}{2} \frac{d}{dt} \int_I \vert z_{xx} - z_{hxx} \vert^2 \ dx  \\
      &\kurz \leq \int_I z_t \cdot (\delta - \delta^h) 
      + z_{xx} \cdot (\delta_{xx} - \delta^h_{xx}) \ dx
      + c_\varepsilon h^4 + 2 \varepsilon \Vert z_t - z_{ht} \Vert^2
      + c_\varepsilon \Vert z - z_h \Vert_{H^2(I)^d}^2.
    \end{align*}
    By definition $\delta - \delta^h$ satisfies the boundary conditions
    \begin{align*}
      \delta - \delta^h = 0 \text{ on } \Gamma_D,\kurz
      \delta_x - \delta^h_x = 0 \text{ on } \Gamma_D'
    \end{align*}
    and thus $\delta - \delta^h \in H^2_D(I)^d$. Therefore $(\ref{equation:elastic_flow_pde})$ yields
    \begin{align*}
      \int_I z_t \cdot (\delta - \delta^h) + z_{xx} \cdot (\delta_{xx} - \delta^h_{xx}) \ dx = - \int_I \lambda z_x \cdot (\delta_x - \delta^h_x) \ dx.
    \end{align*}
    We now choose $\varepsilon = \frac{1}{4}$ and obtain
    \begin{equation}
      \label{equation:estimate lambda}
      \frac{1}{2} \Vert z_t - z_{ht} \Vert^2 + \frac{1}{2} \frac{d}{dt}
      \Vert z_{xx} - z_{hxx} \Vert^2
      \leq c h^4 + c \Vert z - z_h \Vert_{H^2(I)^d}^2
      - \int_I \lambda z_x \cdot (\delta_x - \delta^h_x) \ dx.
    \end{equation}
    To deal with the integral term we calculate
    \begin{equation}\label{equation:integral_term_I}
      -\int_I \lambda z_x \cdot \delta_x \ dx = \int_I \lambda z_{htx} \cdot z_x \ dx.
    \end{equation}
    Further we get
    \begin{align*}
      \int_I \lambda z_x \cdot \delta^h_x \ dx
      &= \int_I \lambda z_x \cdot (((\I_{h,3} z_t)_x \cdot z_{hx}) z_{hx}) \ dx \\
      &= \int_I \lambda (((\I_{h,3} z_t)_x - z_{tx}) \cdot z_{hx}) 
      (z_{hx} \cdot z_x) \ dx 
      + \int_I \lambda (z_{tx} \cdot z_{hx}) (z_{hx} \cdot z_x) \ dx. 
    \end{align*}
    We use $(\I_{h,3} z_t)_x - z_{tx} = 0$ on $\del I$ and integrate by parts to
    obtain
    \begin{align*}
      \int_I \lambda z_x \cdot \delta^h_x \ dx
      &= -\int_I (\I_{h,3} z_t - z_t) \cdot (\lambda z_{hx} (z_{hx} \cdot z_x))_x 
      \ dx 
      + \int_I \lambda (z_{tx} \cdot z_{hx}) \ dx \\
      &\kurz + \int_I \lambda (z_{tx} \cdot z_{hx}) (z_{hx} \cdot z_x - 1) \ dx 
      \\
      &\leq c h^4 + \int_I \lambda (z_{tx} \cdot z_{hx}) \ dx
      + \int_I \lambda (z_{tx} \cdot (z_{hx} - z_x))((z_{hx} - z_x) \cdot 
      z_x) \ dx.
    \end{align*}
    Hölder's inequality then implies
    \begin{equation}\label{equation:integral_term_II}
      \int_I \lambda z_x \cdot \delta^h_x \ dx
      \leq c h^4 + c \Vert z - z_h \Vert_{H^2(I)^d}^2 
      + \int_I \lambda (z_{tx} \cdot z_{hx}) \ dx.
    \end{equation}
    Combining (\ref{equation:integral_term_I}) and 
    (\ref{equation:integral_term_II}) with (\ref{equation:estimate lambda}) yields
    \begin{equation}\label{equation:formula_before_integration}
      \begin{aligned}
        &\frac{1}{2} \Vert z_t - z_{ht} \Vert^2 + \frac{1}{2} \frac{d}{dt}
        \Vert z_{xx} - z_{hxx} \Vert^2
        \leq c h^4 + c \Vert z - z_h \Vert_{H^2(I)^d}^2 
        + \int_I \lambda \del_t(z_x \cdot z_{hx} - 1) \\
        &\leer = ch^4 + c \Vert z - z_h \Vert_{H^2(I)^d}^2
        + \frac{d}{dt} \int_I \lambda (z_x \cdot z_{hx} - 1) \ dx
        - \int_I \lambda_t (z_x \cdot z_{hx} - 1) \ dx.
      \end{aligned}
    \end{equation}
    Integrating (\ref{equation:formula_before_integration}) over $(0,t)$ therefore
    yields
    \begin{equation}\label{equation:formula_after_integration}
      \begin{aligned}
        &\frac{1}{2} \int_0^t \Vert z_t - z_{ht} \Vert^2 \ ds
        + \frac{1}{2}( \Vert (z_{xx} - z_{hxx})(t) \Vert^2 
        - \Vert (z_{xx} - z_{hxx})(0) \Vert^2) \\
        &\leer \leq c h^4 + \int_I (\lambda (z_x \cdot z_{hx} - 1))(t) 
        - (\lambda(z_x \cdot z_{hx} - 1))(0) \ dx\\
        &\leer \kurz + c \int_0^t \Vert z - z_h \Vert_{H^2(I)^d}^2 \ ds
        - \int_0^t \int_I \lambda_t (z_x \cdot z_{hx} - 1) \ dx\ ds.
      \end{aligned}
    \end{equation}
    By definition we have $\I_{h,2} (\vert z_{hx} \vert^2) = 1
    = \vert z_x \vert^2$. We therefore get
    \begin{align*}
      z_x \cdot z_{hx} - 1
      &= z_x \cdot z_{hx} - \frac{1}{2} \vert z_x \vert^2 - \frac{1}{2} \vert z_{hx} \vert^2 + \frac{1}{2} \vert z_{hx} \vert^2 - \frac{1}{2} \\
      &= -\frac{1}{2} \vert z_x - z_{hx} \vert^2 + \frac{1}{2} \left(\vert z_{hx} \vert^2 - \I_{h,2} \vert z_{hx} \vert^2 \right).
    \end{align*}
    Lemma \ref{lemma:new simpson rule}, the stability of the interpolant $\I_{h,2}$ and Lemma \ref{corollary:GN} then imply
    \begin{equation}\label{equation:first_lambda_estimate}
      \begin{aligned}
        &\int_I (\lambda (z_x \cdot z_{hx} - 1))(t) \ dx
        = -\frac{1}{2} \int_I \lambda(t) \vert z_x(t) - z_{hx}(t) \vert^2
        - \lambda(t) (\vert z_{hx} \vert^2 - \I_{h,2} \vert z_{hx} \vert^2 )(t) \ dx \\
        &\kurz \leq c h^4 (\Vert \lambda(t) \Vert_{L^1(I)} \Vert D_h^4 \vert z_{hx}(t) \vert^2 \Vert_{L^\infty(I)} + \Vert \lambda_x(t) \Vert_{L^1(I)} \Vert D_h^3 \vert z_{hx}(t) \vert^2 \Vert_{L^\infty(I)}) \\
        &\leer + \frac{1}{2} \Vert \lambda(t) \Vert_{L^1(I)} \Vert z_x(t) - z_{hx}(t) \Vert_{L^\infty(I)^d}^2 \\
        &\kurz \leq c h^4 + \frac{1}{4} \Vert z_{xx}(t) - z_{hxx}(t) \Vert^2 + c \Vert z(t) - z_h(t) \Vert^2.
      \end{aligned}
    \end{equation}
    For $t = 0$ we therefore get:
    \begin{equation}\label{equation:second_lambda_estimate}
      \begin{aligned}
        \int_I (\lambda (z_x \cdot z_{hx} - 1))(0) \ dx
        &\leq c h^4.
      \end{aligned}
    \end{equation}
    Analogously we obtain for almost all $t \in [0,T]$
    \begin{equation}\label{equation:lambda_t_estimate}
      \begin{aligned}
        &\int_I (\lambda_t (z_x \cdot z_{hx} - 1))(t) \ dx
        = -\frac{1}{2} \int_I \lambda_t(t) \vert z_x(t) - z_{hx}(t) \vert^2
        - \lambda_t(t) (\vert z_{hx} \vert^2 - \I_{h,2} \vert z_{hx} \vert^2 )(t) \ dx \\
        &\kurz \leq c h^4 (\Vert \lambda_t(t) \Vert_{L^1(I)} \Vert D_h^4 \vert z_{hx}(t) \vert^2 \Vert_{L^\infty(I)} + \Vert \lambda_{tx}(t) \Vert_{L^1(I)} \Vert D_h^3 \vert z_{hx}(t) \vert^2 \Vert_{L^\infty(I)}) \\
        &\leer + \frac{1}{2} \Vert \lambda_t(t) \Vert_{L^1(I)} \Vert z_x(t) - z_{hx}(t) \Vert_{L^\infty(I)^d}^2 \\
        &\kurz \leq c h^4 + c \Vert z(t) - z_h(t) \Vert_{H^2(I)^d}^2
      \end{aligned}
    \end{equation}
    Also we have the estimate
    \begin{equation}\label{equation:z_h_L2_estimate}
      \begin{aligned}
        c \Vert z(t) - z_h(t) \Vert^2 
        &= c \Vert z(0) - z_h(0) \Vert^2 + c \int_0^t \frac{d}{dt} \Vert z(s) - z_h(s) \Vert^2 \ ds \\
        &\leq c h^8 + \frac{1}{4} \int_0^t \Vert z_t - z_{ht} \Vert^2 \ dx
        + c \int_0^t \Vert z - z_h \Vert^2 \ ds.
      \end{aligned}
    \end{equation}
    Combining all the estimates (\ref{equation:formula_after_integration})
    - (\ref{equation:z_h_L2_estimate}) and Lemma $(\ref{lemma:gagliardo_nirenberg})$ yields
    \begin{equation}
      \label{equation:final term before grönwall}
      \frac{1}{4} \int_0^t \Vert z_t - z_{ht} \Vert^2 \ ds
      + \frac{1}{4} \Vert z(t) - z_h(t) \Vert_{H^2(I)^d}^2 
      \leq c h^4 + c \int_0^t \Vert z - z_h \Vert_{H^2(I)^d}^2 \ ds.
    \end{equation}
    With $u(t) := \Vert z(t) - z_h(t) \Vert_{H^2(I)^d}^2$, Grönwall's inequality yields
    \begin{equation*}
      \begin{aligned}
        &\Vert z(t) - z_h(t) \Vert_{H^2(I)^d}^2
        = u(t) \leq c h^4 \exp \left(\int_0^t c \ ds \right)
        = c h^4
      \end{aligned}
    \end{equation*}
    and therefore $(\ref{equation:final term before grönwall})$ implies
    \begin{align*}
      \int_0^t \Vert z_t - z_{ht} \Vert^2 \ dt + \Vert z(t) - z_{ht}(t) \Vert_{H^2(I)^d}^2 \leq c h^4.
    \end{align*}
    Finally taking the supremum over all $t$ yields the asserted estimate
    (\ref{equation:error_estimate}).
  \end{proof}
  
  Next we deal with the assumption 
  $\Vert z_h \Vert_{W^{3,\infty}(I_i)} \leq c$,
  that we made in Theorem \ref{theorem:error_estimate}.
  
  \begin{lemma}\label{lemma:regularity_estimate}
    Assume $z \in C^0([0,T];H^4(I)^d)$ and let $z_h$ be a solution to the semi-discrete gradient flow
    $(\ref{equation:semi_discrete_scheme})$. Then there exists $h_1 > 0$ such that
    \begin{align*}
      \max_{t \in [0,T]} \max_{i = 1,...,M} \Vert z_h(t) \Vert_{W^{3,\infty}(I_i)^d} \leq 2 c_0
    \end{align*}
    for all $h < h_1$, where $c_0 := \max_{t \in [0,T]} \Vert z(t) \Vert_{W^{3,\infty}(I)^d}$.
  \end{lemma}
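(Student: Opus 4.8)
The plan is to argue by continuation (a bootstrap), since the statement is circular at first sight: estimate~(\ref{equation:error_estimate}) of Theorem~\ref{theorem:error_estimate} is available only under assumption~(\ref{equation:discrete_boundedness}), which is exactly what we want to prove. The idea is to assume on a sub-interval only the weaker bound $\|z_h(\cdot)\|_{W^{3,\infty}(I_i)^d}\le 2c_0$, use Theorem~\ref{theorem:error_estimate} there to deduce the \emph{strictly better} bound $\tfrac32 c_0$, and then let the sub-interval grow. The main obstacle is precisely this circularity; the quantitative engine that breaks it is the fact that the $O(h^2)$ error and interpolation estimates exactly beat the $O(h^{-3/2})$ inverse estimate on $\S^{3,1}(\T_h)$, leaving room $O(h^{1/2})\to 0$.

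Concretely, set $S:=\{t\in[0,T] : \max_{i=1,\dots,M}\|z_h(s)\|_{W^{3,\infty}(I_i)^d}\le 2c_0 \text{ for all } s\in[0,t]\}$. Since $z_h\in H^1((0,T);\S^{3,1}(\T_h)^d)$ and $\S^{3,1}(\T_h)$ is finite-dimensional, $t\mapsto z_h(t)$ is continuous into $\S^{3,1}(\T_h)^d$, so $t\mapsto\|z_h(t)\|_{W^{3,\infty}(I_i)^d}$ is continuous and $S$ is closed in $[0,T]$. Moreover $0\in S$: since $z_h(0)=\J_{h,3}z_0$ and $(\J_{h,3}v-v)^{(k+1)}=(\I_{h,2}v'-v')^{(k)}$ on each $I_i$, the $\P_2$-interpolation error estimate for $z_0'\in H^3(I)^d$ gives $\|\J_{h,3}z_0-z_0\|_{W^{3,\infty}(I_i)^d}\le ch^{1/2}\|z_0\|_{H^4(I)^d}$, hence $\|z_h(0)\|_{W^{3,\infty}(I_i)^d}\le\|z_0\|_{W^{3,\infty}(I)^d}+ch^{1/2}\le c_0+ch^{1/2}\le\tfrac32 c_0$ for $h$ small. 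Note that $c_0\ge\max_t\|z_x(t)\|_{L^\infty(I)^d}=1>0$ by the arc-length constraint, so this makes sense. It remains to prove that $S$ is open in $[0,T]$; then $S=[0,T]$ by connectedness and the claim follows (with $\tfrac32 c_0$ in place of $2c_0$, which is even stronger).

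So let $t_0\in S$ with $t_0<T$. On $[0,t_0]$ assumption~(\ref{equation:discrete_boundedness}) holds with the constant $2c_0$, so Theorem~\ref{theorem:error_estimate}, applied on $[0,t_0]$ in place of $[0,T]$ (its proof integrates over $(0,t)$ and applies Grönwall, so the resulting constant is bounded by the $t_0=T$ value and in particular independent of $t_0$), yields $\sup_{s\in[0,t_0]}\|z(s)-z_h(s)\|_{H^2(I)^d}\le ch^2$. Writing $e_h:=z_h-\J_{h,3}z$, the triangle inequality together with the interpolation estimate of Lemma~\ref{lemma:interpolation_estimate} gives $\|e_h(s)\|_{H^2(I)^d}\le ch^2$ for $s\in[0,t_0]$. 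Since $e_h$ is elementwise a polynomial of degree $\le 3$, the inverse estimate $\|w\|_{W^{3,\infty}(I_i)}\le ch_i^{-3/2}\|w\|_{H^2(I_i)}$ for $w\in\P_3$ combined with quasi-uniformity gives $\|e_h(s)\|_{W^{3,\infty}(I_i)^d}\le ch^{-3/2}\|e_h(s)\|_{H^2(I)^d}\le ch^{1/2}$. As for $t=0$ we also have $\|\J_{h,3}z(s)-z(s)\|_{W^{3,\infty}(I_i)^d}\le ch^{1/2}\|z(s)\|_{H^4(I)^d}\le ch^{1/2}$, using $z\in C^0([0,T];H^4(I)^d)$. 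Hence $\|z_h(s)\|_{W^{3,\infty}(I_i)^d}\le\|z(s)\|_{W^{3,\infty}(I)^d}+\|\J_{h,3}z(s)-z(s)\|_{W^{3,\infty}(I_i)^d}+\|e_h(s)\|_{W^{3,\infty}(I_i)^d}\le c_0+ch^{1/2}$ for all $s\in[0,t_0]$ and all $i$; choosing $h_1\le h_0$ so small that $ch_1^{1/2}\le c_0/2$ we obtain $\|z_h(s)\|_{W^{3,\infty}(I_i)^d}\le\tfrac32 c_0$ on $[0,t_0]$. By continuity of $t\mapsto z_h(t)$ this bound can degrade to at most $2c_0$ only after some positive time, so $[0,t_0+\varepsilon]\subset S$ for some $\varepsilon>0$; thus $S$ is open, which finishes the argument.

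Two remarks on hypotheses. First, invoking Theorem~\ref{theorem:error_estimate} tacitly uses that $z$ is the continuous solution satisfying the full regularity assumed there (in particular $z_t\in L^\infty((0,T);H^4(I)^d)$ and $\lambda\in W^{1,\infty}((0,T);W^{1,1}(I))$), which is understood to be in force in this setting. Second, the constant $c$ furnished by Theorem~\ref{theorem:error_estimate} depends on the bootstrap constant $2c_0$; but $c_0$ is a fixed quantity determined by $z$, so this creates no circularity when choosing $h_1$. As a by-product, the same argument shows that the semi-discrete solution cannot blow up, so if existence is only known locally in time it extends to all of $[0,T]$.
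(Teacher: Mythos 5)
Your proposal is correct and follows essentially the same continuation argument as the paper: establish the bound at $t=0$ via the interpolation estimate for $\J_{h,3}$, then on any interval where the $2c_0$ bound holds invoke Theorem~\ref{theorem:error_estimate} together with the interpolation and inverse estimates to improve it to $\tfrac32 c_0$, and conclude by continuity. The only cosmetic differences are your open/closed/connected phrasing in place of the paper's supremum-and-contradiction formulation, and your use of $\J_{h,3}z$ rather than $\I_{h,3}z$ as the intermediate comparison function.
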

  \begin{proof}
    Let $\varepsilon := \sup \{t \in [0,T]\ \vert \ \max_{i =1,...,M} \Vert z_h(s) \Vert_{W^{3,\infty}(I_i)^d} \leq 2 c_0 \text{ for all } 0 \leq s \leq t \}$.
    Since
    \begin{align*}
      \Vert z_h(0) \Vert_{W^{3,\infty}(I_i)^d}
      \leq \Vert z(0) \Vert_{W^{3,\infty}(I)^d} + \Vert z(0) - \J_{h,3}z(0) \Vert_{W^{3,\infty}(I_i)^d}
      \leq c_0 + c h^\frac{1}{2}
    \end{align*}
    we have that $\varepsilon > 0$. Assume that $\varepsilon < T$. Then we have for $i \in \{1,...,M\}$ and $t \in [0,\varepsilon]$
    \begin{align*}
      \Vert z_h(t) \Vert_{W^{3,\infty}(I_i)^d}
      &\leq \Vert z(t) \Vert_{W^{3,\infty}(I_i)^d}
      + \Vert z(t) - \I_{h,3} z(t) \Vert_{W^{3,\infty}(I_i)^d}
      + \Vert \I_{h,3}z(t) - z_h(t) \Vert_{W^{3,\infty}(I_i)^d} \\
      &\leq c_0 + c h^\frac{1}{2} \Vert z(t) \Vert_{H^4(I)^d}
      + c h^{-\frac{3}{2}} \Vert \I_{h,3} z(t) - z_h(t) \Vert_{H^2(I)^d}.
    \end{align*}
    Since $\max_{i = 1,...,M} \Vert z_h(t) \Vert_{W^{3,\infty}(I_i)^d} \leq 2 c_0$ for $t \in [0,\varepsilon]$ we can use Theorem \ref{theorem:error_estimate} on $[0,\varepsilon]$ and the interpolation estimate of Lemma \ref{lemma:interpolation_estimate} to obtain
    \begin{align*}
      \Vert \I_{h,3} z(t) - z_h(t) \Vert_{H^2(I)^d}
      \leq \Vert \I_{h,3} z(t) - z(t) \Vert_{H^2(I)^d}
      + \Vert z(t) - z_h(t) \Vert_{H^2(I)^d}
      \leq c h^2.
    \end{align*}
    Inserting this estimate into the previous estimate yields
    \begin{align*}
      \Vert z_h(t) \Vert_{W^{3,\infty}(I_i)^d}
      \leq c_0 + c h^\frac{1}{2}
      \leq \frac{3}{2} c_0
    \end{align*}
    for all $t \in [0,\varepsilon]$ provided that $0 < h \leq h_1$. Then there exists $\widetilde \varepsilon > \varepsilon$ such that
    \begin{align*}
      \max_{i =1,...,M} \Vert z_h(t) \Vert_{W^{3,\infty}(I_i)^d} \leq 2 c_0
    \end{align*}
    for all $t \in [0,\widetilde \varepsilon]$, contradicting the definition of $\varepsilon$.
  \end{proof}
  
  We have established convergence of the semi-discrete solutions and a quasi-optimal error estimate under suitable regularity assumptions. It remains to establish existence and approximability of semi-discrete solutions with a fully discrete scheme. This discrete scheme will be introduced in Section \ref{section:time_discretization} and existence of solutions and convergence of the scheme will also be proved there. We will follow a standard approach using an energy estimate to obtain a weakly convergent subsequence. For the full sequence of discrete solutions to converge, it is therefore necessary, that the semi-discrete solutions are unique.
  
  \begin{proposition}[Uniqueness of semi-discrete solutions]
    \label{theorem:semi_discrete_uniqueness}
    Solutions $z_h \in C^1([0,T]; \S^{3,1}(\T_h)^d)$ to the semi-discrete problem
    $(\ref{equation:semi_discrete_scheme})$ are
    unique.
  \end{proposition}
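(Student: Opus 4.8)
The plan is to recognise that, for a fixed mesh $\T_h$, the semi-discrete problem $(\ref{equation:semi_discrete_scheme})$ is an autonomous ordinary differential equation with a locally Lipschitz right-hand side on the finite-dimensional affine space $\J_{h,3}z_0 + \S^{3,1}_D(\T_h)^d$, so that uniqueness reduces to the classical uniqueness statement for such equations. The first step is to record that every semi-discrete solution satisfies $\I_{h,2}(|z_{hx}(t)|^2) = 1$ for all $t\in[0,T]$: this holds at $t = 0$ because $z_{hx}(0) = (\J_{h,3}z_0)_x = \I_{h,2}(z_0')$ has unit length at every node of $\N_2(\T_h)$ (as $|z_0'|^2 = 1$), and $\frac{d}{dt}\I_{h,2}(|z_{hx}|^2) = 2\I_{h,2}(z_{hx}\cdot z_{htx}) = 0$ since $z_{ht}(t)\in\G_h(z_h(t))$. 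In particular $z_{hx}(z,t)\neq 0$ at every node $z\in\N_2(\T_h)$ and every $t$.

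The second step exhibits the solution operator. For $z_h\in\S^{3,1}(\T_h)^d$ with $z_{hx}(z)\neq 0$ at all $z\in\N_2(\T_h)$, consider the linear map $C_{z_h}\colon\S^{3,1}_D(\T_h)^d\to\S^{2,0}(\T_h)$, $C_{z_h}v := \I_{h,2}(z_{hx}\cdot v_x)$, so that $\G_h(z_h) = \ker C_{z_h}$. Since $C_{z_h}v$ is determined by its nodal values $z_{hx}(z)\cdot v_x(z)$, and since for $v\in\S^{3,1}_D(\T_h)^d$ the nodal derivatives $(v_x(z))_{z\in\N_2(\T_h)}$ can be prescribed arbitrarily in $\real^d$ at each node $z\notin\Gamma_D'$ while $v_x(z) = 0$ for $z\in\Gamma_D'$, the range of $C_{z_h}$ equals $\{w\in\S^{2,0}(\T_h): w(z) = 0 \text{ for } z\in\Gamma_D'\}$, independently of $z_h$; hence $C_{z_h}$ has constant rank and $\dim\G_h(z_h)$ does not depend on $z_h$. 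Problem $(\ref{equation:semi_discrete_scheme})$ characterises $z_{ht}(t)$ as the unique element of $\ker C_{z_h(t)}$ with $(z_{ht}(t),y) + (z_{hxx}(t),y_{xx}) = 0$ for all $y\in\ker C_{z_h(t)}$, i.e.\ $z_{ht}(t) = F(z_h(t))$ with $F(z_h) := -P_{z_h}R(z_h)$, where $P_{z_h}$ is the $L^2$-orthogonal projection onto $\ker C_{z_h}$ and $R(z_h)\in\S^{3,1}_D(\T_h)^d$ is defined by $(R(z_h),y) = (z_{hxx},y_{xx})$ for all $y\in\S^{3,1}_D(\T_h)^d$ (affine in $z_h$). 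On the open set $U := \{z_h\in\S^{3,1}(\T_h)^d: z_{hx}(z)\neq 0 \text{ for all } z\in\N_2(\T_h)\}$ the family $C_{z_h}$ has constant rank, so $z_h\mapsto P_{z_h}$ is $C^\infty$ and $F$ is $C^\infty$, hence locally Lipschitz, on $U$; by the first step every semi-discrete solution takes values in $U$ and, being $C^1$, is a classical solution of $\dot z_h = F(z_h)$.

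Given two solutions $z_h^{(1)}, z_h^{(2)}$ with the common initial value $\J_{h,3}z_0$, both solve $\dot z_h = F(z_h)$ on $U$; their trajectories are compact subsets of $U$, so $F$ is Lipschitz with some constant $L$ on a compact neighbourhood of them contained in $U$. Hence $e := z_h^{(1)} - z_h^{(2)}$ satisfies $\|e_t(t)\| = \|F(z_h^{(1)}(t)) - F(z_h^{(2)}(t))\|\le L\|e(t)\|$, so that $\frac{d}{dt}\|e(t)\|^2\le 2L\|e(t)\|^2$, and Grönwall's inequality with $e(0) = 0$ forces $e\equiv 0$ on $[0,T]$. (Equivalently, one invokes the Picard--Lindelöf theorem and the usual continuation argument.)

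The step that requires care is the nondegeneracy assertion of the second paragraph: that the nodal non-vanishing $z_{hx}(z)\neq 0$ for $z\in\N_2(\T_h)$, which by the first step holds along every solution, already makes the discrete linearised constraint nondegenerate with a $z_h$-independent kernel, so that the solution operator $F$ is smooth near the solution trajectories. This is verified element by element, using that the nodal derivative values of a $C^1$ piecewise cubic on $\T_h$ are free apart from the Dirichlet condition on $\Gamma_D'$ (the midpoint derivatives are recovered from telescoping differences of the nodal values, the endpoint derivatives are degrees of freedom). An energy estimate in the spirit of the proof of Theorem \ref{theorem:error_estimate} is also possible: after introducing the now well-defined discrete Lagrange multipliers $\lambda_h^{(j)}$ of $(\ref{equation:semi_discrete_scheme})$, one tests the two equations against $z_{ht}^{(1)} - z_{ht}^{(2)}$ and uses the nodal identity $z_{hx}^{(1)}(z)\cdot z_{hx}^{(2)}(z) - 1 = -\frac12|z_{hx}^{(1)}(z) - z_{hx}^{(2)}(z)|^2$ together with $\|\lambda_h^{(1)} - \lambda_h^{(2)}\|\le c(\|z_h^{(1)} - z_h^{(2)}\| + \|z_{ht}^{(1)} - z_{ht}^{(2)}\|)$ to make the multiplier contributions quadratic in the error before applying Grönwall's inequality; this route, however, rests on the same nondegeneracy.
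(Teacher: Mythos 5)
Your proof is correct, but it takes a genuinely different route from the paper. The paper argues directly with the two solutions $z_h,\widetilde z_h$: it builds admissible test functions $y_h\in\G_h(z_h)$, $\widetilde y_h\in\G_h(\widetilde z_h)$ from $z_h-\widetilde z_h$ by subtracting explicit correction integrals, adds the two tested equations, and shows that all correction contributions are quadratic in $\Vert z_h-\widetilde z_h\Vert$ by means of the nodal identity $\I_{h,2}((z_{hx}-\widetilde z_{hx})\cdot z_{hx})=\tfrac12\I_{h,2}(\vert z_{hx}-\widetilde z_{hx}\vert^2)$ (a consequence of both solutions satisfying $\I_{h,2}\vert\cdot\vert^2=1$), together with inverse estimates and the discrete energy bound; Gr\"onwall with an $h$-dependent coefficient of order $h^{-7/2}$ then closes the argument. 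You instead recast $(\ref{equation:semi_discrete_scheme})$ as an autonomous ODE $\dot z_h=F(z_h)$ on a finite-dimensional space and reduce uniqueness to Picard--Lindel\"of. The one step of your argument that genuinely needs care is the nondegeneracy/constant-rank claim for $C_{z_h}v=\I_{h,2}(z_{hx}\cdot v_x)$, and you justify it correctly: the propagated nodal unit-length property gives $z_{hx}(z)\neq 0$ at all $z\in\N_2(\T_h)$, and the Hermite degrees of freedom do allow arbitrary prescription of $v_x$ at all nodes of $\N_2(\T_h)\setminus\Gamma_D'$ (the midpoint derivative $v_x(m_i)=\tfrac{3}{2h_i}(v(x_i)-v(x_{i-1}))-\tfrac14(v_x(x_{i-1})+v_x(x_i))$ is recovered by choosing the value increments), so the range of $C_{z_h}$ is constant and $P_{z_h}$, hence $F$, is smooth near the trajectory. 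In terms of trade-offs: your approach additionally delivers local existence and smooth dependence on the initial datum essentially for free and avoids $h$-dependent Lipschitz constants, whereas the paper's energy argument is self-contained within its finite element toolbox, parallels the error analysis of Theorem \ref{theorem:error_estimate}, and extends more readily to settings where an explicit smooth solution operator is not available; your concluding sketch of such an energy route (via discrete Lagrange multipliers) correctly identifies that it rests on the same nondegeneracy.
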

  \begin{proof}
    Let $z_h, \widetilde{z}_h$ be two solutions to the semi-discrete scheme
    (\ref{equation:semi_discrete_scheme}).
    We set
    \begin{align*}
      y_h(x) := (z_h - \widetilde{z}_h)(x) 
      - \int_a^x \I_{h,2}(((z_{hx} - \widetilde{z}_{hx})\cdot z_{hx}) z_{hx}) \ ds, \\
      \widetilde{y}_h(x) := (\widetilde{z}_h - z_h)(x)
      - \int_a^x \I_{h,2}(((\widetilde{z}_{hx} - z_{hx}) \cdot \widetilde{z}_{hx})
      \widetilde{z}_{hx}) \ ds.
    \end{align*}
    Therefore we have $y_h(t) \in \G_h(z_h(t))$, 
    $\widetilde{y}_h(t) \in \G_h(\widetilde{z}_h(t))$ for all $t$. Testing
    with these functions yields
    \begin{align*}
      0 &= \int_I z_{ht} \cdot y_h + z_{hxx} \cdot y_{hxx} \ dx \\
      &= \int_I z_{ht} \cdot (z_h - \widetilde{z}_h) 
      + z_{hxx} \cdot (z_h - \widetilde{z}_h)_{xx} \ dx \\
      &- \int_I z_{ht} \cdot \int_a^x \I_{h,2}(((z_{hx} - \widetilde{z}_{hx})\cdot z_{hx})
      z_{hx}) \ ds + z_{hxx} \cdot \I_{h,2}(((z_{hx} - \widetilde{z}_{hx})\cdot
      z_{hx}) z_{hx})_x \ dx,
    \end{align*}
    and analogously
    \begin{align*}
      0 &= \int_I \widetilde{z}_{ht} \cdot (\widetilde{z}_h - z_h) 
      + \widetilde{z}_{hxx} \cdot (\widetilde{z}_h - z_h)_{xx} \ dx \\
      &- \int_I \widetilde{z}_{ht} \cdot \int_a^x 
      \I_{h,2}(((\widetilde{z}_{hx} - z_{hx})\cdot \widetilde{z}_{hx})
      \widetilde{z}_{hx}) \ ds 
      + \widetilde{z}_{hxx} \cdot \I_{h,2}(((\widetilde{z}_{hx} - z_{hx})\cdot
      \widetilde{z}_{hx}) \widetilde{z}_{hx})_x \ dx.
    \end{align*}
    We add both equations and get
    \begin{align*}
      \int_I &(z_{ht} - \widetilde{z}_{ht}) \cdot (z_h - \widetilde{z}_h)
      + \vert z_{hxx} - \widetilde{z}_{hxx} \vert^2 \ dx \\
      &= \int_I z_{ht} \cdot \int_a^x \I_{h,2}(((z_{hx} - \widetilde{z}_{hx})\cdot z_{hx})
      z_{hx}) \ ds + z_{hxx} \cdot \I_{h,2}(((z_{hx} - \widetilde{z}_{hx})\cdot
      z_{hx}) z_{hx})_x \ dx \\
      &+ \int_I \widetilde{z}_{ht} \cdot \int_a^x 
      \I_{h,2}(((\widetilde{z}_{hx} - z_{hx})\cdot \widetilde{z}_{hx})
      \widetilde{z}_{hx}) \ ds 
      + \widetilde{z}_{hxx} \cdot \I_{h,2}(((\widetilde{z}_{hx} - z_{hx})\cdot
      \widetilde{z}_{hx}) \widetilde{z}_{hx})_x \ dx  \\
      &= \Rom{1} + \Rom{2} + \Rom{3} + \Rom{4}.
    \end{align*}
    For $\Rom{1}$ we obtain with Hölder's inequality and basic integral estimates
    \begin{align*}
      \Rom{1} 
      &= \int_I z_{ht} \cdot \int_a^x \I_{h,2}(((z_{hx} - \widetilde{z}_{hx})\cdot
      z_{hx})z_{hx}) \ ds \ dx \\
      &\leq \Vert z_{ht} \Vert_{L^1(I)^d} \left\Vert \int_a^x \I_{h,2}
      (((z_{hx} - \widetilde{z}_{hx})\cdot z_{hx})z_{hx}) \ ds \right\Vert_{L^\infty(I)^d} \\
      &\leq \Vert z_{ht} \Vert_{L^1(I)^d} \Vert \I_{h,2}
      (((z_{hx} - \widetilde{z}_{hx})\cdot z_{hx})z_{hx}) \Vert_{L^1(I)^d}.
    \end{align*}
    For $\Rom{2}$ we use additional inverse estimates from Lemma
    \ref{lemma:inverse_estimates} to obtain
    \begin{align*}
      \Rom{2}
      &= \int_I z_{hxx} \cdot \I_{h,2}(((z_{hx} - \widetilde{z}_{hx})\cdot
      z_{hx}) z_{hx})_x \ dx \\
      &\leq \Vert z_{hxx} \Vert_{L^\infty(I)^d}
      \Vert \I_{h,2}(((z_{hx} - \widetilde{z}_{hx})\cdot z_{hx}) z_{hx})_x \Vert_{L^1(I)^d}\\
      &\leq c h^{-\frac{3}{2}} \Vert z_{hxx} \Vert \Vert \I_{h,2} 
      (((z_{hx} - \widetilde{z}_{hx})\cdot z_{hx}) z_{hx}) \Vert_{L^1(I)^d}.
    \end{align*}
    Analogous estimates hold for $\Rom{3}$ and $\Rom{4}$.
    We can now use Lemma \ref{lemma:L1_estimate} and Lemma \ref{lemma:L1_estimate_2}
    to get
    \begin{align*}
      &\Vert \I_{h,2}
      (((z_{hx} - \widetilde{z}_{hx})\cdot z_{hx})z_{hx}) \Vert_{L^1(I)^d}
      \leq c \Vert \I_{h,2}((z_{hx} - \widetilde{z}_{hx})\cdot z_{hx}) \Vert_{L^1(I)} \\
      &\kurz 
      = \frac{c}{2} \Vert \I_{h,2} (\vert z_{hx} - \widetilde{z}_{hx} \vert^2) \Vert_{L^1(I)}
      \leq c \Vert z_{hx} - \widetilde{z}_{hx} \Vert_{L^2(I)^d}^2.
    \end{align*}
    With another inverse estimate and the energy estimate $\Vert z_{hxx} \Vert \leq
    c \Vert z_0 \Vert_{H^2(I)^d}$ we therefore get
    \begin{align*}
      \frac{1}{2} \frac{d}{dt} \Vert z_h - \widetilde{z}_h \Vert^2 
      + \Vert z_{hxx} - \widetilde{z}_{hxx} \Vert^2
      \leq C h^{-\frac{7}{2}}(1 + \Vert z_{ht} \Vert_{L^1(I)^d} + \Vert \widetilde{z}_{ht}
      \Vert_{L^1(I)^d}) \Vert z_h - \widetilde{z}_h \Vert^2.
    \end{align*}
    Through integration and application of Grönwall's inequality we obtain
    $z_h = \widetilde z_h$.
  \end{proof}
  
  % Time discretization
  \section{Time discretization}\label{section:time_discretization}
  In this section we construct a fully discrete scheme to approximate the semi-discrete problem (\ref{equation:semi_discrete_scheme}), similarly to the one from \cite{Bar13}, but adapted to the $\P_2$ constraint.
  For this we first dissect the time interval 
  $[0,T] = \bigcup_{n = 1}^N [t_{n-1}, t_n]$
  with $t_n = n\tau$ and time step size $\tau$. Let $Z^n \in \S^{3,1}(\T_h)^d$ the
  calculated approximation of $z_h(t_n)$. Note that the discrete constraint
  $\I_{h,2}(\vert z_{hx} \vert^2 - 1)= 0$ for the semi-discrete scheme can be
  imposed equivalently via the two equations
  \begin{align*}
    0 = \I_{h,2}(\vert z_{hx}(0) \vert^2 - 1), \leer
    0 = \frac{1}{2}\frac{d}{dt} \I_{h,2}(\vert z_{hx} \vert^2 - 1)
    = \I_{h,2}( z_{htx} \cdot z_{hx}).
  \end{align*}
  We now linearize this constraint with respect to the previous time step by
  replacing the time derivative in $z_{htx}$ with the backwards difference quotient
  $d_t Z^{n+1}_x$.
  We obtain the linearized discrete constraint
  \begin{align*}
    0 = \I_{h,2}(\vert Z^0_x \vert^2 - 1), \leer
    0 = \I_{h,2}(d_t^+ Z^n_x \cdot Z^n_x)
    =\I_{h,2}(d_tZ^{n+1}_x \cdot Z^n_x)
  \end{align*}
  for all $n \in \{0,...,N-1\}$.
  By also replacing the time derivative in the semi-discrete scheme with the 
  backwards difference quotient we obtain the fully discrete scheme:\\
  Set 
  $$
  Z^0 := \J_{h,3} z_0 = z_0(a) + \int_a^x I_{h,2}(z_0') \ d\sigma.
  $$ 
  Given $Z^n \in \S^{3,1}(\T_h)^d$ find $d_t Z^{n+1} \in \G_h(Z^n)$ such that
  \begin{equation}\label{equation:discrete_scheme}
    (d_t Z^{n+1}, Y) + \tau(d_t Z^{n+1}_{xx}, Y_{xx}) = - (Z^n_{xx}, Y_{xx})
  \end{equation}
  for all $Y \in \G_h(Z^n)$
  and set $Z^{n+1} = Z^n + \tau d_t Z^{n+1}$. Since the discretized, linearized
  constraint defines a closed subspace of $\S^{3,1}(\T_h)^d$, the existence of 
  discrete solutions follows immediately from the Lax-Milgram lemma.
  
  \subsection{Convergence of discrete solutions}
  Now that we have established the existence of discrete solutions $(Z^n)_{n=0,...,N}$ we interpolate those values to obtain functions that are defined on the entire time interval $[0,T]$. For this we define $\hat{Z}, Z^+, Z^-: [0,T] \to \S^{3,1}(\T_h)^d$ via
  $$
  \hat{Z}(0) = Z^+(0) = Z^-(0) = Z^0
  $$ 
  and
  \begin{equation}
    \label{equation:time interpolation}
    \hat{Z}(t) := Z^n + (t - t_n) d_t Z^{n+1},
    \leer Z^+(t) := Z^{n+1},
    \leer Z^-(t) := Z^n
  \end{equation}
  for $t \in (t_n, t_{n+1}]$.
  Now we want to show that these interpolants converge as $\tau \to 0$ and that their limit function is a solution to the semi-discrete problem (\ref{equation:semi_discrete_scheme}). For the convergence of those functions we need an a priori estimate that bounds them in $H^1(0,T;\S^{3,1}(\T_h)^d)$ and thus allows us to pick a weakly convergent subsequence. For the weak limit to be a possible solution to the semi-discrete problem, we also have to make sure it satisfies the discrete arc-length constraint. For this we have to control the discrete constraint violation of the interpolants and show that it vanishes in the limit as $\tau \to 0$.
  
  \begin{proposition}[discrete energy stability]
    \label{proposition:discrete_energy_stabiltiy}
    The discrete solutions satisfy for all $n \in \mathbb{N}$
    \begin{equation}\label{equation:discrete_energy_stabiltiy}
      \frac{1}{2} \Vert Z^n_{xx} \Vert^2
      + \tau \sum_{k=0}^{n-1} \left( \Vert d_t Z^{k+1} \Vert^2 
      + \frac{\tau}{2} \Vert d_t Z^{k+1}_{xx} \Vert^2 \right)
      = \frac{1}{2} \Vert Z^0_{xx} \Vert^2.
    \end{equation}
    This especially implies $\Vert Z^n_{xx} \Vert \leq \Vert Z^{n-1}_{xx} \Vert
    \leq ... \leq \Vert Z^0_{xx} \Vert$.
  \end{proposition}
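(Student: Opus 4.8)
The plan is to run the standard energy argument for the backward Euler discretization of a gradient flow, using crucially that the admissible test space $\G_h(Z^n)$ contains the increment $d_t Z^{n+1}$ itself. First I would test the discrete scheme $(\ref{equation:discrete_scheme})$ with $Y = d_t Z^{n+1}$; this is admissible because, by construction of the scheme, $d_t Z^{n+1} \in \G_h(Z^n)$. This produces the identity
\[
\Vert d_t Z^{n+1} \Vert^2 + \tau \Vert d_t Z^{n+1}_{xx} \Vert^2 = -\bigl(Z^n_{xx}, d_t Z^{n+1}_{xx}\bigr).
\]

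Next, I would exploit the relation $Z^{n+1} = Z^n + \tau d_t Z^{n+1}$, which gives $\tau\, d_t Z^{n+1}_{xx} = Z^{n+1}_{xx} - Z^n_{xx}$, together with the polarization identity $(a, b - a) = \tfrac12\bigl(\Vert b\Vert^2 - \Vert a\Vert^2 - \Vert b - a\Vert^2\bigr)$ applied with $a = Z^n_{xx}$ and $b = Z^{n+1}_{xx}$. Using $\Vert Z^{n+1}_{xx} - Z^n_{xx}\Vert^2 = \tau^2 \Vert d_t Z^{n+1}_{xx}\Vert^2$, the right-hand side becomes
\[
-\bigl(Z^n_{xx}, d_t Z^{n+1}_{xx}\bigr) = -\frac{1}{2\tau}\bigl(\Vert Z^{n+1}_{xx}\Vert^2 - \Vert Z^n_{xx}\Vert^2\bigr) + \frac{\tau}{2}\Vert d_t Z^{n+1}_{xx}\Vert^2.
\]
Substituting into the previous identity and multiplying by $\tau$ yields the per-step energy balance
\[
\frac12 \Vert Z^{n+1}_{xx}\Vert^2 - \frac12 \Vert Z^n_{xx}\Vert^2 + \tau\Bigl(\Vert d_t Z^{n+1}\Vert^2 + \frac{\tau}{2}\Vert d_t Z^{n+1}_{xx}\Vert^2\Bigr) = 0.
\]
Summing this telescoping identity over the first $n$ time steps gives exactly $(\ref{equation:discrete_energy_stabiltiy})$, and since every term under the sum is nonnegative, the monotonicity $\Vert Z^n_{xx}\Vert \le \dots \le \Vert Z^0_{xx}\Vert$ follows at once.

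I do not expect any genuine obstacle here: the only points requiring care are the admissibility of the test function $d_t Z^{n+1}$ (immediate from the definition of $\G_h(Z^n)$ and the scheme) and the sign bookkeeping in the polarization identity. Everything else is a routine telescoping sum, and no structural assumptions on $h$ or $\tau$ are needed, which is precisely why the stability is unconditional.
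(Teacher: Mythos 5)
Your proposal is correct and follows essentially the same route as the paper: test with $d_t Z^{k+1}\in\G_h(Z^k)$, rewrite $(Z^k_{xx},d_tZ^{k+1}_{xx})$ via the binomial/polarization identity for $Z^{k+1}=Z^k+\tau d_tZ^{k+1}$, and telescope. No differences worth noting.
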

  
  \begin{proof}
    Testing the discrete scheme (\ref{equation:discrete_scheme}) with $d_t Z^{k+1}
    \in \G_h(Z^k)$ yields
    \begin{align*}
      0 = \Vert d_t Z^{k+1} \Vert^2 + \tau \Vert d_t Z^{k+1}_{xx} \Vert^2
      + (Z^k_{xx}, d_t Z^{k+1}_{xx}).
    \end{align*}
    From the identity $Z^{k+1} = Z^k + \tau d_t Z^{k+1}$ and the binomial formula we have
    \begin{align*}
      \Vert Z^{k+1}_{xx} \Vert^2
      = \Vert Z^k_{xx} \Vert^2 + 2\tau (Z^k_{xx}, d_t Z^{k+1}_{xx})
      + \tau^2 \Vert d_t Z^{k+1}_{xx} \Vert^2,
    \end{align*}
    which is equivalent to
    \begin{align*}
      (Z^k_{xx}, d_t Z^{k+1}_{xx})
      = \frac{1}{2\tau}(\Vert Z^{k+1}_{xx} \Vert^2 - \Vert Z^k_{xx} \Vert^2)
      - \frac{\tau}{2} \Vert d_t Z^{k+1}_{xx} \Vert^2
      = \frac{d_t}{2} \Vert Z^{k+1}_{xx} \Vert^2 
      - \frac{\tau}{2} \Vert d_t Z^{k+1}_{xx} \Vert^2.
    \end{align*}
    Inserting this identity into the first equation yields
    \begin{align*}
      0 = \frac{d_t}{2} \Vert Z^{k+1}_{xx} \Vert^2 + \Vert d_t Z^{k+1} \Vert^2
      + \frac{\tau}{2} \Vert d_t Z^{k+1}_{xx} \Vert^2.
    \end{align*}
    Multiplying both sides with $\tau$ and summation over $k = 0,...,n-1$ then 
    implies
    \begin{align*}
      0 &= \tau \sum_{k=0}^{n-1} \frac{d_t}{2} \Vert Z^{k+1}_{xx} \Vert^2
      + \Vert d_t Z^{k+1} \Vert^2 + \frac{\tau}{2} \Vert d_t Z^{k+1}_{xx} 
      \Vert^2 \\
      &= \frac{1}{2}( \Vert Z^n_{xx} \Vert^2 - \Vert Z^0_{xx} \Vert^2)
      + \sum_{k=0}^{n-1} \tau \Vert d_t Z^{k+1} \Vert^2
      + \frac{\tau^2}{2} \Vert d_t Z^{k+1}_{xx} \Vert^2,
    \end{align*}
    which is equivalent to the asserted equality.
  \end{proof}
  
  \begin{proposition}[discrete constraint violation]
    \label{proposition:discrete_constraint_violation}
    For all $t \in [0,T]$ and $\widetilde x \in \N_2(\T_h)$ we have
    \begin{align*}
      \vert \vert \hat Z_x(\widetilde{x},t) \vert^2 - 1 \vert
      \leq c \Vert Z^0_{xx} \Vert^2 \tau^\frac{1}{2} h^{-1} 
      (1 + \tau^\frac{1}{2}).
    \end{align*}
  \end{proposition}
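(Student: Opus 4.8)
The plan is to exploit that the linearized discrete constraint holds \emph{exactly at each node} of $\N_2(\T_h)$, to turn this into a telescoping identity for $\vert \hat Z_x(\widetilde x,\cdot)\vert^2-1$, and then to estimate the resulting sum of squared nodal derivatives by the discrete energy estimate of Proposition~\ref{proposition:discrete_energy_stabiltiy}, being careful to retain the favourable power $\tau^{1/2}$. I note in advance that the identity below shows $\vert \hat Z_x(\widetilde x,t)\vert^2-1\ge 0$, so the absolute value in the assertion can be dropped.

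First I fix a node $\widetilde x\in\N_2(\T_h)$ and $n$. Since $d_t Z^{n+1}\in\G_h(Z^n)$ means $\I_{h,2}(d_t Z^{n+1}_x\cdot Z^n_x)=0$ as a function, evaluation at $\widetilde x$ gives $d_t Z^{n+1}_x(\widetilde x)\cdot Z^n_x(\widetilde x)=0$, so in
\begin{equation*}
  \vert Z^{n+1}_x(\widetilde x)\vert^2 = \vert Z^n_x(\widetilde x)\vert^2 + 2\tau\, d_t Z^{n+1}_x(\widetilde x)\cdot Z^n_x(\widetilde x) + \tau^2\vert d_t Z^{n+1}_x(\widetilde x)\vert^2
\end{equation*}
the mixed term vanishes. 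By construction $Z^0_x(\widetilde x)=\I_{h,2}(z_0')(\widetilde x)=z_0'(\widetilde x)$ and $\vert z_0'\vert=1$, hence $\vert Z^0_x(\widetilde x)\vert^2=1$, and summation over $k=0,\dots,n-1$ yields $\vert Z^n_x(\widetilde x)\vert^2-1=\tau^2\sum_{k=0}^{n-1}\vert d_t Z^{k+1}_x(\widetilde x)\vert^2$. For $t\in(t_n,t_{n+1}]$ we have $\hat Z_x(\widetilde x,t)=Z^n_x(\widetilde x)+(t-t_n)\,d_t Z^{n+1}_x(\widetilde x)$, and the same cancellation together with $t-t_n\le\tau$ gives
\begin{equation*}
  0\le \vert \hat Z_x(\widetilde x,t)\vert^2-1 = \tau^2\sum_{k=0}^{n-1}\vert d_t Z^{k+1}_x(\widetilde x)\vert^2 + (t-t_n)^2\vert d_t Z^{n+1}_x(\widetilde x)\vert^2 \le \tau^2\sum_{k=0}^{N-1}\vert d_t Z^{k+1}_x(\widetilde x)\vert^2 .
\end{equation*}

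It remains to estimate $\tau^2\sum_k\vert d_t Z^{k+1}_x(\widetilde x)\vert^2$. If $I_i$ is an element with $\widetilde x\in I_i$, then $d_t Z^{k+1}_x|_{I_i}$ is a quadratic polynomial, so the inverse estimate of Lemma~\ref{lemma:inverse_estimates} gives $\vert d_t Z^{k+1}_x(\widetilde x)\vert^2\le\Vert d_t Z^{k+1}_x\Vert_{L^\infty(I_i)^d}^2\le ch^{-1}\Vert d_t Z^{k+1}_x\Vert^2$, and since $d_t Z^{k+1}\in\S^{3,1}(\T_h)^d\subset H^2(I)^d$ a Gagliardo--Nirenberg interpolation inequality (Lemma~\ref{lemma:gagliardo_nirenberg}) yields $\Vert d_t Z^{k+1}_x\Vert^2\le c\Vert d_t Z^{k+1}\Vert^2+c\Vert d_t Z^{k+1}\Vert\,\Vert d_t Z^{k+1}_{xx}\Vert$. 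Multiplying by $\tau^2$, summing over $k=0,\dots,N-1$ and using the Cauchy--Schwarz inequality on the mixed term gives
\begin{equation*}
  \tau^2\sum_k\vert d_t Z^{k+1}_x(\widetilde x)\vert^2 \le ch^{-1}\tau\Bigl(\tau\sum_k\Vert d_t Z^{k+1}\Vert^2\Bigr) + ch^{-1}\tau^2\Bigl(\sum_k\Vert d_t Z^{k+1}\Vert^2\Bigr)^{1/2}\Bigl(\sum_k\Vert d_t Z^{k+1}_{xx}\Vert^2\Bigr)^{1/2} .
\end{equation*}
Proposition~\ref{proposition:discrete_energy_stabiltiy} provides $\tau\sum_k\Vert d_t Z^{k+1}\Vert^2\le\tfrac12\Vert Z^0_{xx}\Vert^2$ and $\tau^2\sum_k\Vert d_t Z^{k+1}_{xx}\Vert^2\le\Vert Z^0_{xx}\Vert^2$, hence $\sum_k\Vert d_t Z^{k+1}\Vert^2\le c\tau^{-1}\Vert Z^0_{xx}\Vert^2$ and $\sum_k\Vert d_t Z^{k+1}_{xx}\Vert^2\le\tau^{-2}\Vert Z^0_{xx}\Vert^2$; therefore the first summand is $\le ch^{-1}\tau\Vert Z^0_{xx}\Vert^2$ and the second is $\le ch^{-1}\tau^2\tau^{-1/2}\tau^{-1}\Vert Z^0_{xx}\Vert^2=ch^{-1}\tau^{1/2}\Vert Z^0_{xx}\Vert^2$. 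Combining this with the identity above yields $\vert \hat Z_x(\widetilde x,t)\vert^2-1\le ch^{-1}\Vert Z^0_{xx}\Vert^2(\tau^{1/2}+\tau)=c\Vert Z^0_{xx}\Vert^2\tau^{1/2}h^{-1}(1+\tau^{1/2})$, which is the claim.

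The only genuinely delicate point is choosing the estimate for the nodal value $d_t Z^{k+1}_x(\widetilde x)$ carefully enough: bounding it crudely by $ch^{-1}\Vert d_t Z^{k+1}_{xx}\Vert^2$ and invoking only $\tau^2\sum_k\Vert d_t Z^{k+1}_{xx}\Vert^2\le\Vert Z^0_{xx}\Vert^2$ would give merely $ch^{-1}\Vert Z^0_{xx}\Vert^2$ and lose the factor $\tau^{1/2}$. It is the Gagliardo--Nirenberg interpolation between $\Vert d_t Z^{k+1}\Vert$ and $\Vert d_t Z^{k+1}_{xx}\Vert$ — the two quantities controlled by the energy estimate on the different $\tau$-scales $\tau\sum_k(\cdot)^2$ and $\tau^2\sum_k(\cdot)^2$ — combined with the Cauchy--Schwarz inequality, that produces the stated rate.
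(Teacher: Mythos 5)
Your proof is correct and follows essentially the same route as the paper: the nodal orthogonality $d_tZ^{k+1}_x(\widetilde x)\cdot Z^k_x(\widetilde x)=0$ gives the telescoping identity, the inverse estimate converts the nodal value to an $L^2$ norm at cost $h^{-1}$, and the Gagliardo--Nirenberg interpolation combined with the energy estimate produces the $\tau^{1/2}$ rate. The only (immaterial) difference is that you handle the mixed term $\sum_k\Vert d_tZ^{k+1}\Vert\,\Vert d_tZ^{k+1}_{xx}\Vert$ by Cauchy--Schwarz over the sum, whereas the paper applies Young's inequality termwise before summing.
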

  
  \begin{proof}
    Let $t \in (t_n,t_{n+1}]$, $\widetilde{x} \in \N_2(\T_h)$. Since $Z^{k+1}_x = Z^k_x + \tau
    d_t Z^{k+1}_x$ and $Z^k_x(\widetilde x) \cdot d_t Z^{k+1}_x(\widetilde x) = 0$, we have
    \begin{align*}
      \vert Z^{k+1}_x(\widetilde{x}) \vert^2
      = \vert Z^k_x(\widetilde{x}) \vert^2
      + \tau^2 \vert d_t Z^{k+1}_x(\widetilde{x}) \vert^2
      + 2\tau Z^k_x(\widetilde{x}) \cdot d_t Z^{k+1}_x(\widetilde{x})
      = \vert Z^k_x(\widetilde{x}) \vert^2
      + \tau^2 \vert d_t Z^{k+1}_x(\widetilde{x}) \vert^2.
    \end{align*}
    Therefore we have for all $n$
    \begin{align*}
      \vert Z^n_x(\widetilde{x}) \vert^2
      = \vert Z^0_x(\widetilde{x}) \vert^2
      + \sum_{k=0}^{n-1} \tau^2 \vert d_t Z^{k+1}_x(\widetilde{x}) \vert^2
    \end{align*}
    and similarly
    \begin{align*}
      \vert \hat Z_x(\widetilde x,t) \vert^2
      = \vert Z^n_x(\widetilde x) \vert^2 + (t - t_n)^2 \vert d_t Z^{n+1}(\widetilde x) \vert^2.
    \end{align*}
    Since $\vert Z^0_x(\widetilde x) \vert^2 = 1$, with the help of the inverse estimate from Lemma \ref{lemma:inverse_estimates} we obtain
    \begin{align*}
      \vert \vert \hat Z_x(\widetilde{x},t) \vert^2 - 1 \vert
      \leq \tau^2 \sum_{k=0}^{n} \vert d_t Z^{k+1}_x(\widetilde{x}) \vert^2
      \leq c \tau^2 h^{-1} \sum_{k=0}^{n} \Vert d_t Z^{k+1}_x \Vert^2.
    \end{align*}
    The Gagliardo-Nirenberg inequality from Lemma \ref{lemma:gagliardo_nirenberg}
    thus implies
    \begin{align*}
      \vert \vert \hat Z_x(\widetilde{x},t) \vert^2 - 1 \vert
      &\leq c \tau^\frac{1}{2} h^{-1} \sum_{k=0}^{n} \tau^\frac{1}{2} 
      \Vert d_t Z^{k+1} \Vert \tau
      \Vert d_t Z^{k+1}_{xx} \Vert + \tau^\frac{3}{2} \Vert d_t Z^{k+1} \Vert^2 \\
      &\leq c \tau^\frac{1}{2} h^{-1} \sum_{k=0}^{n} \tau 
      \Vert d_t Z^{k+1} \Vert^2 +  \tau^2
      \Vert d_t Z^{k+1}_{xx} \Vert^2 + \tau^\frac{3}{2} \Vert d_t Z^{k+1} \Vert^2.
    \end{align*}
    The energy estimate (\ref{equation:discrete_energy_stabiltiy}) then yields
    \begin{align*}
      \vert \vert \hat Z_x(\widetilde{x},t) \vert^2 - 1 \vert
      \leq c \Vert Z^0_{xx} \Vert^2 \tau^\frac{1}{2} h^{-1} 
      (1 + \tau^\frac{1}{2}),
    \end{align*}
    which proves the estimate.
  \end{proof}
  
  Next we will show, that those discrete solutions
  converge towards a solution $z_h$ of the semi-discrete problem as $\tau \to 0$.
  This will also prove the existence of semi-discrete solutions that we asserted
  in the proof of Theorem \ref{theorem:error_estimate}.
  
  \begin{proposition}[Convergence of the discrete scheme]
    Let $\{Z^n \ \vert \ n \in \{0,...,N\}\} \subset \S^{3,1}(\T_h)^d$ be the calculated discrete
    solutions and $\hat{Z},\ Z^+$ and $Z^-$ the interpolants defined in $(\ref{equation:time interpolation})$.
    Then there is $z_h \in H^1([0,T],\S^{3,1}(\T_h)^d)$ such that $\hat{Z} \rightharpoonup z_h$ in
    $H^1([0,T],\S^{3,1}(\T_h)^d)$ as $\tau \to 0$. Further $z_h$ satisfies the discrete constraint
    $\I_{h,2}(\vert z_{hx} \vert^2 - 1) = 0$ and is the unique solution to the 
    semi-discrete scheme.
  \end{proposition}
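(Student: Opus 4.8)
The plan is to combine the discrete energy stability (Proposition~\ref{proposition:discrete_energy_stabiltiy}), the discrete constraint violation estimate (Proposition~\ref{proposition:discrete_constraint_violation}), a compactness argument, and the uniqueness result (Proposition~\ref{theorem:semi_discrete_uniqueness}). First I would extract uniform bounds: from the energy identity~(\ref{equation:discrete_energy_stabiltiy}) one gets $\|\hat Z_{xx}(t)\| \le \|Z^0_{xx}\|$ for all $t$ (using that $\hat Z$ interpolates linearly between $Z^n$ and $Z^{n+1}$ and $t \mapsto \|\cdot\|^2$ is convex along the segment), and $\tau \sum_k \|d_t Z^{k+1}\|^2 \le \|Z^0_{xx}\|^2$, which gives $\|\hat Z_t\|_{L^2([0,T];L^2(I)^d)} \le c$. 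Hence $\hat Z$ is bounded in $H^1([0,T];\S^{3,1}(\T_h)^d)$, a finite-dimensional space in the spatial variable, so along a subsequence $\hat Z \rightharpoonup z_h$ in $H^1([0,T];\S^{3,1}(\T_h)^d)$ for some $z_h$, and by the compact embedding $H^1([0,T]) \embeds\embeds C^0([0,T])$ in the (finite-dimensional) spatial variable we get $\hat Z \to z_h$ in $C^0([0,T];\S^{3,1}(\T_h)^d)$. Moreover $\|Z^+(t) - \hat Z(t)\| = (t_{n+1}-t)\|d_tZ^{n+1}\|$ and $\|Z^-(t) - \hat Z(t)\| = (t-t_n)\|d_tZ^{n+1}\|$ are both bounded in $L^2(0,T)$ by $\tau^{1/2}\|Z^0_{xx}\|$, so $Z^+, Z^- \to z_h$ in $L^2([0,T];\S^{3,1}(\T_h)^d)$ as well, again using an inverse estimate to upgrade the topology if needed.

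Next I would verify that $z_h$ satisfies the discrete arc-length constraint. Proposition~\ref{proposition:discrete_constraint_violation} gives $|\,|\hat Z_x(\widetilde x,t)|^2 - 1| \le c\|Z^0_{xx}\|^2 \tau^{1/2}h^{-1}(1+\tau^{1/2})$ for every node $\widetilde x \in \N_2(\T_h)$; for fixed $h$ this tends to $0$ as $\tau \to 0$. Passing to the limit along the subsequence and using uniform convergence in time together with the finite-dimensional spatial convergence, $|z_{hx}(\widetilde x,t)|^2 = 1$ for all $\widetilde x \in \N_2(\T_h)$ and all $t$, i.e.\ $\I_{h,2}(|z_{hx}|^2 - 1) = 0$. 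Likewise, from the linearized constraint $\I_{h,2}(d_t Z^{k+1}_x \cdot Z^k_x) = 0$ one obtains in the limit $\I_{h,2}(z_{htx} \cdot z_{hx}) = 0$ for a.e.\ $t$, so $z_{ht}(t) \in \G_h(z_h(t))$ (the homogeneous boundary conditions being preserved since each $d_tZ^{k+1} \in \S^{3,1}_D(\T_h)^d$ and this is a closed subspace). Also $z_h(0) = Z^0 = \J_{h,3}z_0$.

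Then I would pass to the limit in the scheme itself. Fix $t \in (0,T)$ and a test function $y_h \in \G_h(z_h(t))$; the subtlety is that the admissible test space $\G_h(Z^-(t))$ varies with $\tau$. Writing~(\ref{equation:discrete_scheme}) in the interpolant notation, $(\hat Z_t(t), Y) + \tau(d_t Z^{n+1}_{xx}, Y_{xx}) = -(Z^-_{xx}(t), Y_{xx})$ for all $Y \in \G_h(Z^-(t))$, I would project $y_h$ onto $\G_h(Z^-(t))$ — concretely, replace $y_h$ by $y_h - \int_a^{\cdot}\I_{h,2}((y_{hx}\cdot Z^-_x)Z^-_x)\,d\sigma$ or an analogous correction — and use that $Z^-_x \to z_{hx}$ uniformly on $\overline I$ so that this correction term tends to $0$ in $H^2$. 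The term $\tau(d_t Z^{n+1}_{xx}, Y_{xx})$ vanishes: integrating in $t$ and using $\tau\sum_k \tau\|d_tZ^{k+1}_{xx}\|^2 \le \|Z^0_{xx}\|^2$ together with an inverse estimate bounds it by $c\tau^{1/2}$. Since $Z^-_{xx} \to z_{hxx}$ in $L^2([0,T];L^2(I)^d)$ and $\hat Z_t \rightharpoonup z_{ht}$ in $L^2([0,T];L^2(I)^d)$, testing against $\varphi(t)y_h(t)$ for $\varphi \in C_c^\infty(0,T)$ and letting $\tau \to 0$ yields $(z_{ht}, y_h) + (z_{hxx}, y_{hxx}) = 0$ for a.e.\ $t$ and all $y_h \in \G_h(z_h(t))$, i.e.\ $z_h$ solves~(\ref{equation:semi_discrete_scheme}). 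Finally, since~(\ref{equation:semi_discrete_scheme}) has a unique solution by Proposition~\ref{theorem:semi_discrete_uniqueness}, every subsequence of $\hat Z$ has a further subsequence converging to this same $z_h$, hence the whole family converges.

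The main obstacle I anticipate is the handling of the $\tau$-dependent constraint spaces $\G_h(Z^-(t))$ in the limit passage: one must carefully show that an arbitrary fixed $y_h \in \G_h(z_h(t))$ can be approximated by admissible test functions $Y^\tau \in \G_h(Z^-(t))$ with $Y^\tau \to y_h$ strongly enough (in $H^2$, uniformly in $t$) that all three terms in the scheme converge, and that the corresponding correction terms, which involve $\I_{h,2}$ of products of $Z^-_x$ with $y_{hx}$, are controlled uniformly using~(\ref{equation:discrete_energy_stabiltiy}) and the inverse estimates of Lemma~\ref{lemma:inverse_estimates}. The remaining steps — extracting bounds, compactness in the finite-dimensional-in-space setting, and killing the $\tau(d_tZ^{n+1}_{xx},\cdot)$ term — are routine.
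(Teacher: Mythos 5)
Your proposal is correct and follows essentially the same route as the paper: the energy identity \eqref{equation:discrete_energy_stabiltiy} gives the $H^1([0,T];\S^{3,1}(\T_h)^d)$ bound and hence weak compactness in the finite-dimensional-in-space setting, Proposition~\ref{proposition:discrete_constraint_violation} yields the constraint in the limit, a corrected test function handles the $\tau$-dependent constraint spaces, and Proposition~\ref{theorem:semi_discrete_uniqueness} upgrades subsequence convergence to convergence of the whole family. The one detail to repair in the obstacle you flag is that your correction must use the normalized field $Z^-_x/\vert Z^-_x\vert^2$ rather than $Z^-_x$ itself, since $\vert Z^-_x\vert^2\neq 1$ at the nodes of $\N_2(\T_h)$ and otherwise the corrected function lies only approximately in $\G_h(Z^-(t))$; this is exactly what the paper does (working with an arbitrary $y_h\in\S^{3,1}_D(\T_h)^d$, passing to a limit identity valid for all such $y_h$, and only then specializing to $y_h\in\G_h(z_h(t))$), and the paper also disposes of the $\tau(d_tZ^{n+1}_{xx},Y_{xx})$ term by absorbing it into $(Z^{n+1}_{xx},Y_{xx})$ rather than estimating it by $c\tau^{1/2}$ as you do --- both of which are fine.
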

  \begin{proof}
    From Proposition \ref{proposition:discrete_energy_stabiltiy} we know that
    \begin{align*}
      \frac{1}{2} \Vert Z^n_{xx} \Vert^2
      + \sum_{k=0}^{n-1} \tau \Vert d_t Z^{k+1} \Vert^2 + \frac{\tau^2}{2}
      \Vert d_t Z^{k+1}_{xx} \Vert^2
      = \frac{1}{2} \Vert Z^0_{xx} \Vert^2.
    \end{align*}
    By definition we have $\del_t \hat{Z} \vert_{(t_k, t_{k+1})} = 
    d_t Z^{k+1}$ and thus
    \begin{align*}
      \sum_{k=0}^{n-1} \tau \Vert d_t Z^{k+1} \Vert^2 + \frac{\tau^2}{2}
      \Vert d_t Z^{k+1}_{xx} \Vert^2
      = \int_0^{t_n} \Vert \del_t \hat{Z} \Vert^2 
      + \frac{\tau}{2} \Vert \del_t \hat{Z}_{xx} \Vert^2 \ dr.
    \end{align*}
    Since for $t \in (t_{k-1}, t_k]$ we have $Z^+(t) = Z^k$ and the integral
    term increases monotonically, we get
    \begin{equation*}
      \frac{1}{2} \Vert Z^+_{xx}(t) \Vert^2
      + \int_0^t \Vert \del_t \hat{Z} \Vert^2 + \frac{\tau}{2} \Vert \del_t \hat{Z}_{xx}
      \Vert^2 \ ds
      \leq \frac{1}{2} \Vert Z^0_{xx} \Vert^2
    \end{equation*}
    for all $t \in [0,T]$.
    By definition we have $\vert \hat{Z} - Z^\pm \vert \leq \tau \vert \del_t \hat{Z} \vert$
    and thus $\hat{Z}$ is bounded in $H^1([0,T]; \S^{3,1}(\T_h)^d)$ and since $\S^{3,1}(\T_h)^d$ has
    finite dimension, we can extract a subsequence such that
    $\hat{Z} \rightharpoonup z_h$ in $H^1([0,T];\S^{3,1}(\T_h)^d)$ as $\tau \to 0$. The Sobolev
    embedding theorem therefore implies
    $\hat{Z} \to z_h$ in $C^0([0,T];\S^{3,1}(\T_h)^d)$. Further we have
    $Z^\pm \to z_h$ in $L^\infty([0,T];\S^{3,1}(\T_h)^d)$, since
    \begin{align*}
      \Vert \hat Z - Z^\pm \Vert_{L^\infty([0,T];H^2(I)^d)}^2
      &\leq \tau^2 \Vert \del_t \hat Z \Vert_{L^\infty([0,T];H^2(I)^d)}^2
      = \tau^2 \max_{n = 1,...,N} \Vert d_t Z^n \Vert_{H^2(I)^d}^2 \\
      &\leq c h^{-4} \tau^2 \sum_{n=1}^N \Vert d_t Z^n \Vert^2
      \leq c h^{-4} \tau \Vert Z^0_{xx} \Vert^2
      \xrightarrow{\tau \to 0} 0.
    \end{align*}
    Let now $\widetilde{x} \in \N_2(\T_h), t \in [0,T]$
    arbitrary. From Proposition \ref{proposition:discrete_constraint_violation}
    we get
    \begin{align*}
      \vert \vert z_{hx} (\widetilde x,t) \vert^2 - 1 \vert &= 
      \lim_{\tau \to 0} \vert \vert \hat Z_x(\widetilde{x},t)\vert^2 - 1 \vert
      \leq \lim_{\tau \to 0} c \tau^\frac{1}{2} h^{-1} = 0.
    \end{align*}
    Thus $z_h$ satisfies the constraint $\I_{h,2}(\vert z_{hx} \vert^2 - 1) = 0$.
    Let now $y_h \in \S^{3,1}_D(\T_h)^d$ be arbitrary. We set
    \begin{align*}
      Y^n(x) := y_h(x) - \int_a^x \I_{h,2} \left( (y_{hx} \cdot Z^n_x) \frac{Z^n_x}{\vert Z^n_x \vert^2} \right) \ d\sigma.
    \end{align*}
    This gives us
    \begin{align*}
      \I_{h,2}(Y^n_x \cdot Z^n_x)
      = \I_{h,2} \left( y_{hx} \cdot Z^n_x - (y_{hx} \cdot Z^n_x) \frac{\vert Z^n_x \vert^2}{\vert Z^n_x \vert^2} \right) = 0,
    \end{align*}
    thus $Y^n \in \G_h(Z^n)$. Testing the discrete scheme $(\ref{equation:discrete_scheme})$ with $Y^n$ yields
    \begin{align*}
      0 &= (d_t Z^{n+1}, Y^n) + (Z^{n+1}_{xx}, Y^n_{xx}) 
      = (d_t Z^{n+1}, y_h) + (Z^{n+1}_{xx}, y_{hxx}) \\
      &-\left(d_t Z^{n+1}, \int_a^x \I_{h,2} \left( (y_{hx} \cdot Z^n_x) \frac{Z^n_x}{\vert Z^n_x \vert^2}\right) \ d\sigma \right)
      - \left( Z^{n+1}_{xx}, \I_{h,2} \left( (y_{hx} \cdot Z^n_x) \frac{Z^n_x}{\vert Z^n_x \vert^2} \right)_x \right).
    \end{align*}
    We now multiply this equation with $\eta \in C^\infty_c((0,T))$ arbitrary and integrate it over $(t_n, t_{n+1})$. Using the definitions from $(\ref{equation:time interpolation})$ and the identity $\del_t \hat Z \vert_{(t_n,t_{n+1})} = d_t Z^{n+1}$ we can sum up over all $n = 0,...,N$ to get
    \begin{align*}
      &\int_0^T \eta(t) ((\del_t \hat Z(t), y_h) + (Z^+_{xx}(t), y_{hxx})) \ dt \\
      &\kurz = \int_0^T \eta(t) \left( \del_t \hat Z(t), \int_a^x \I_{h,2} \left((y_{hx} \cdot Z^-_x(t)) \frac{Z^-_x(t)}{\vert Z^-_x(t) \vert^2} \right) d\sigma \right) dt \\
      &\leer + \int_0^T \eta(t) \left(Z^+_{xx}(t), \I_{h,2} \left( (y_{hx} \cdot Z^-_x) \frac{Z^-_x}{\vert Z^-_x \vert^2}\right)_x \right) dt
    \end{align*}
    Passing to the limit $\tau \to 0$ and observing $\vert z_{hx}(\widetilde x, t) \vert^2 = 1,\ \widetilde x \in \N_2(\T_h)$ thus yields
    \begin{align*}
      &\int_0^T \eta(t) ((z_{ht}(t), y_h) + (z_{hxx}(t), y_{hxx})) \ dt \\
      &\kurz = \int_0^T \eta(t) \left( z_{ht}(t), \int_a^x \I_{h,2}((y_{hx} \cdot z_{hx}(t)) z_{hx}(t)) \ d\sigma \right) \ dt \\
      &\leer + \int_0^T \eta(t)(z_{hxx}(t), \I_{h,2}((y_{hx} \cdot z_{hx}(t)). z_{hx}(t))_x) \ dt
    \end{align*}
    And since $\eta \in C^\infty_c((0,T))$ and $y_h \in \S^{3,1}_D(\T_h)^d$ were chosen arbitrarily, the fundamental lemma in the calculus of variations implies
    \begin{align*}
      (z_{ht}, y_h) + (z_{hxx}, y_{hxx})
      = \left(z_{ht}, \int_a^x \I_{h,2}((y_{hx} \cdot z_{hx})z_{hx})\ d\sigma \right) + (z_{hxx}, \I_{h,2}((y_{hx} \cdot z_{hx}) z_{hx})_x)
    \end{align*}
    for every $y_h \in \S^{3,1}_D(\T_h)^d$ and almost everywhere on $(0,T)$. In particular we deduce for $y_h \in \G_h(z_h(t))$ that
    \begin{align*}
      (z_{ht}, y_h) + (z_{hxx}, y_{hxx}) = 0
    \end{align*}
    and thus $z_h$ solves the semi-discrete problem $(\ref{equation:semi_discrete_scheme})$.
  \end{proof}
  
  % Boundary conditions
  \subsection{Boundary conditions}
  We note that just like in \cite{Bar13} we can add fixed, clamped or
  periodic boundary conditions to the discrete scheme by choosing a starting value
  $Z^0$ for the iteration that satisfies the boundary conditions and
  enforce the additional conditions
  \begin{itemize}
    \item $d_t Z^{n+1} = 0 \text{ on } \Gamma_D $, $ d_t Z^{n+1}_x = 0
    \text{ on } 
    \Gamma_D' \kurz$ for fixed/clamped boundary conditions,
    \item $d_t Z^{n+1}(a) = d_t Z^{n+1}(b),\ 
    d_t Z^{n+1}_x(a) = d_t Z^{n+1}_x(b) \kurz $ for periodic boundary conditions.
  \end{itemize}
  In the case of $\Gamma_D = \del I$, this however introduces a new problem.
  Since $Z^0$ must also satisfy the discrete arc-length constraint 
  $\I_{h,2}(\vert Z^0_x \vert - 1) = 0$ and the nodal interpolant of $z_0$ in
  general does not satisfy this constraint, we set $Z^0 := \J_{h,3} z_0$.
  The problem with this approach is, that in general we have
  $\J_{h,3} z_0(b) \neq z_0(b)$. Thus our choice of $Z^0$ will lead to a discrete
  solution that does not satisfy the required boundary conditions. This however
  is not a big problem, as for the error term $z_0(b) - \J_{h,3}z_0(b)$ we have
  $\vert z_0(b) - \J_{h,3} z_0(b) \vert \leq c h^4$ according to
  Lemma \ref{lemma:interpolation_estimate}.
  Another option to impose clamped boundary conditions is to introduce a penalty term $\varepsilon^{-1} \vert (z_h - u_D)(b) \vert^2$ to the energy functional instead.
  
  % Numerical Simulations
  \section{Numerical Experiments}\label{section:numerical_experiments}
  In this section we perform numerical experiments to verify the approximation
  results from Theorem \ref{theorem:error_estimate}. To be able to properly calculate the approximation error $z - z_h$ we choose an initial value $z_0$ for which the continuous elastic flow $z$ is well known. This is for example the case if $z_0$ is stationary. To calculate the starting value
  \begin{align*}
    z_{h,0} = \J_{h,3} z_0 = z_0(a) + \int_a^x \I_{h,2}(z_0') \ d\sigma
  \end{align*}
  in the Hermite basis we use the explicit formula
  \begin{align*}
    \int_{x_i}^{x_{i+1}} \I_{h,2} f \ dx 
    = \frac{h_i}{6}(f(x_i) + 4 f(m_i) + f(x_{i+1}))
  \end{align*}
  for the Simpson rule to calculate
  \begin{gather*}
    z_{h,0}(x_0) = z_0(x_0),\leer
    z_{h,0}'(x_i) = z_0'(x_i) \kurz \text{ for all } i = 0,...,M, \\
    z_{h,0}(x_i) = z_{h,0}(x_{i-1}) + \frac{h_i}{6}(z_0'(x_{i-1}) + 4 z_0'(m_i) + z_0'(x_i)) \kurz \text{ for all } i = 1,...,M.
  \end{gather*}
  To compute the norms involved in the error estimate we set $e^n_h := z(t_n) - Z^n$, $e_{ht}^n := \I_{h,3} z_t(t_n) - d_t Z^n$ and use the approximations
  \begin{align*}
    \vert e_h \vert_{L^\infty H^2}
    := \max_n \vert e_h^n \vert_{H^2(I)^d}, \leer
    \vert e_h \vert_{H^1 L^2}^2
    := \tau \sum_{n=1}^N \Vert e_{ht}^n \Vert^2.
  \end{align*}
  Since $e_{ht}^n \in \S^{3,1}(\T_h)$ the term $\vert e_h \vert_{H^1L^2}$ can be computed exactly. For the computation of $\vert e_h^n \vert_{H^2(I)^d}$ we use the binomial identity to get
  \begin{align*}
    \vert e_h^n \vert_{H^2(I)^d}^2
    = \vert z(t_n) \vert_{H^2(I)^d}^2 + \vert Z^n \vert_{H^2(I)^d}^2
    - 2 \int_I z_{xx}(t_n) \cdot Z^n_{xx} \ dx.
  \end{align*}
  Using Lemma \ref{lemma:rhs_interpolation}, we can replace $z_{xx}(t_n)$ in the integral by the second derivative of its nodal interpolant $\I_{h,3}z(t_n)$ to obtain
  \begin{equation*}
    \vert e_h^n \vert_{H^2(I)^d}^2
    = \vert z(t_n) \vert_{H^2(I)^d}^2 + \vert Z^n \vert_{H^2(I)^d}^2
    - 2 \int_I (\I_{h,3}z(t_n))_{xx} \cdot Z^n_{xx} \ dx.
  \end{equation*}
  Additionally we also compute approximation for the approximation error $e_h$ in the weaker $L^\infty H^1$ semi-norm and the $L^\infty L^2$ norm by setting $\tilde e_h^n := \I_{h,3} z(t_n) - Z^n$ and
  \begin{align*}
    \vert \widetilde e_h \vert_{L^\infty H^1} := \max_n \vert \tilde e_h^n \vert_{H^1(I)^d}, \leer
    \Vert \widetilde e_h \Vert_{L^\infty L^2} := \max_n \Vert \tilde e_h^n \Vert.
  \end{align*}
  We start with a two-dimensional example.
  \begin{example}[Semi-clamped circle]
    \label{experiment:circle}
    We choose $I = [0, 2\pi ]$ and $z_0(x) := (\cos(x), \sin(x))$. 
    Additionally we choose $\Gamma_D = \{0\}$, $\Gamma_D' = \{0,2\pi\}$ and $T = 50$.
    Then $z_0$ is a local minimum for the bending energy and thus a solution to the elastic flow.
    Since $z(x,t) = z_0(x) = (\cos x, \sin x)$, we have $\vert z(t_n) \vert_{H^2(I)^d}^2
    = 2\pi$ and $z_t = 0$. 
    Now we calculate the approximation errors $\vert e_h \vert_{L^\infty H^2}$ and $\vert e_h \vert_{H^1 L^2}$ for both, the $\P_1$ and $\P_2$ constraint,
    as described above. The results are shown in Table \ref{table:maxH2 error circle} and \ref{table:H1L2 error circle}.
    The results for $\vert e_h \vert_{L^\infty H^2}$ are as expected. When it comes to $\vert e_h \vert_{H^1 L^2}$, we observe that the semi-discrete flow is constant in case of the $\P_2$ constraint. That means if $z$ is a local minimizer of the bending energy $E$ under the continuous arc-length constraint, then $\J_{h,3} z$ minimizes $E$ locally under the $\P_2$ constraint. 
    This also implies that in the case of the $\P_2$ constraint the approximation error is the same as the interpolation error and is therefore quasi-optimal in the weaker norms as well. In contrast for the $\P_1$ constraint we observe suboptimal quadratic convergence in both weaker norms.
    \\
    \DTLloaddb[noheader,keys={h,e11,r11,e12,r12,e13,r13,e21,r21,e22,r22,e23,r23}]
    {circle_h2_DB}
    {tables/circle_open_end_h2.csv}   
    \begin{center}\begin{minipage}{.85\linewidth}
        \resizebox{\linewidth}{!}{
          \begin{tabular}{c|cc|cc|cc|cc}\toprule
            \multirow{3}{*}{$h$}
            & \multicolumn{4}{c|}{$\mathcal{P}_1$ constraint}
            & \multicolumn{4}{c}{$\mathcal{P}_2$ constraint} \\
            \cmidrule{2-9}
            & \multicolumn{2}{c|}{$\tau = 1/10$} 
            & \multicolumn{2}{c|}{$\tau = 1/20$}
            & \multicolumn{2}{c|}{$\tau = 1/10$}
            & \multicolumn{2}{c}{$\tau = 1/20$}\\
            & $\vert e_{h} \vert_{L^\infty H^2}$ & eoc 
            & $\vert e_{h} \vert_{L^\infty H^2}$ & eoc
            & $\vert e_{h} \vert_{L^\infty H^2}$ & eoc
            & $\vert e_{h} \vert_{L^\infty H^2}$ & eoc
            \DTLforeach*{circle_h2_DB}{\h=h,\a=e11,\b=r11,\c=e12,\d=r12,\e=e21,\f=r21,
              \g=e22,\i=r22}{%
              \DTLiffirstrow{\\\cmidrule{1-9}}{\\}%
              \h & \a & \b & \c & \d & \e & \f & \g & \i
            }%
            \\\bottomrule
        \end{tabular}}
        \captionof{table}
        {
          \label{table:maxH2 error circle}
          Approximation error in Example \ref{experiment:circle} for the schemes with $\P_1$ and $\P_2$ 
          constraint in the $L^\infty H^2$ semi-norm for various step and mesh sizes. 
          In case of the $\P_1$ constraint we can observe a linear convergence rate 
          as $h \to 0$, while for the $\P_2$ constraint we observe quadratic 
          convergence.
        }
    \end{minipage}\end{center}
    \DTLloaddb[noheader,keys={h,e11,r11,e12,r12,e21,r21,e22,r22}]
    {circle_time_DB}
    {tables/circle_open_end_time.csv}  
    \begin{center}\begin{minipage}{.85\linewidth}
        \resizebox{\linewidth}{!}{
          \begin{tabular}{c|cc|cc|cc|cc}\toprule
            \multirow{3}{*}{$h$}
            & \multicolumn{4}{c|}{$\mathcal{P}_1$ constraint}
            & \multicolumn{4}{c}{$\mathcal{P}_2$ constraint} \\
            \cmidrule{2-9}
            & \multicolumn{2}{c|}{$\tau = 1/1000$} 
            & \multicolumn{2}{c|}{$\tau = 1/2000$}
            & \multicolumn{2}{c|}{$\tau = 1/1000$}
            & \multicolumn{2}{c}{$\tau = 1/2000$}\\
            & $\vert e_{h} \vert_{H^1 L^2}$ & eoc 
            & $\vert e_{h} \vert_{H^1 L^2}$ & eoc
            & $\vert e_{h} \vert_{H^1 L^2}$ & eoc
            & $\vert e_{h} \vert_{H^1 L^2}$ & eoc
            \DTLforeach*{circle_time_DB}
            {\h=h,\a=e11,\b=r11,\c=e12,\d=r12,\e=e21,\f=r21,\g=e22,\i=r22}{%
              \DTLiffirstrow{\\\cmidrule{1-9}}{\\}%
              \h & \a & \b & \c & \d & \e & \f & \g & \i
            }%
            \\\bottomrule
        \end{tabular}}
        \captionof{table}
        {
          \label{table:H1L2 error circle}
          Approximation error in Example \ref{experiment:circle} for the $\P_1$ and $\P_2$ constraints in $H^1L^2$. For the $\P_1$ constraint we observe linear convergence. For the scheme with $\P_2$ constraint however, the discrete solution is stationary, just like the continuous one.
        }    
    \end{minipage}\end{center}
    
    \DTLloaddb[noheader,keys={h,e11,r11,e12,r12,e21,r21,e22,r22}]
    {circle_p1_DB}
    {tables/circle_p1_weak_norms_combined.csv}   
    \begin{center}\begin{minipage}{.85\linewidth}
        \resizebox{\linewidth}{!}{
          \begin{tabular}{c|cc|cc|cc|cc}\toprule
            \multirow{3}{*}{$h$}
            & \multicolumn{8}{c}{$\mathcal{P}_1$ constraint} \\
            \cmidrule{2-9}
            & \multicolumn{2}{c|}{$\tau = 1/10$}
            & \multicolumn{2}{c|}{$\tau = 1/20$}
            & \multicolumn{2}{c|}{$\tau = 1/10$}
            & \multicolumn{2}{c}{$\tau = 1/20$}\\
            & $\Vert \widetilde e_{h} \Vert_{L^\infty L^2}$ & eoc
            & $\Vert \widetilde e_{h} \Vert_{L^\infty L^2}$ & eoc
            & $\vert \widetilde e_{h} \vert_{L^\infty H^1}$ & eoc
            & $\vert \widetilde e_{h} \vert_{L^\infty H^1}$ & eoc
            \DTLforeach*{circle_p1_DB}{\h=h,\a=e11,\b=r11,\c=e12,\d=r12,
              \e=e21,\f=r21,\g=e22,\i=r22}{%
              \DTLiffirstrow{\\\cmidrule{1-9}}{\\}%
              \h & \a & \b & \c & \d & \e & \f & \g & \i
            }%
            \\\bottomrule
        \end{tabular}}
        \captionof{table}
        {
          \label{table:p1 errors circle}
          Approximation error in Example \ref{experiment:circle} in the $L^\infty L^2$ norm and the $L^\infty H^1$ semi-norm for the scheme using the $\P_1$ constraint. In both cases we observe quadratic convergence as $h \to 0$.
        }
    \end{minipage}\end{center}
  \end{example}
  
  \begin{figure}
    \begin{center}
      \begin{minipage}{0.3\linewidth}
        \includegraphics[trim={4cm 15mm 5cm 15mm},clip,width=\textwidth]
        {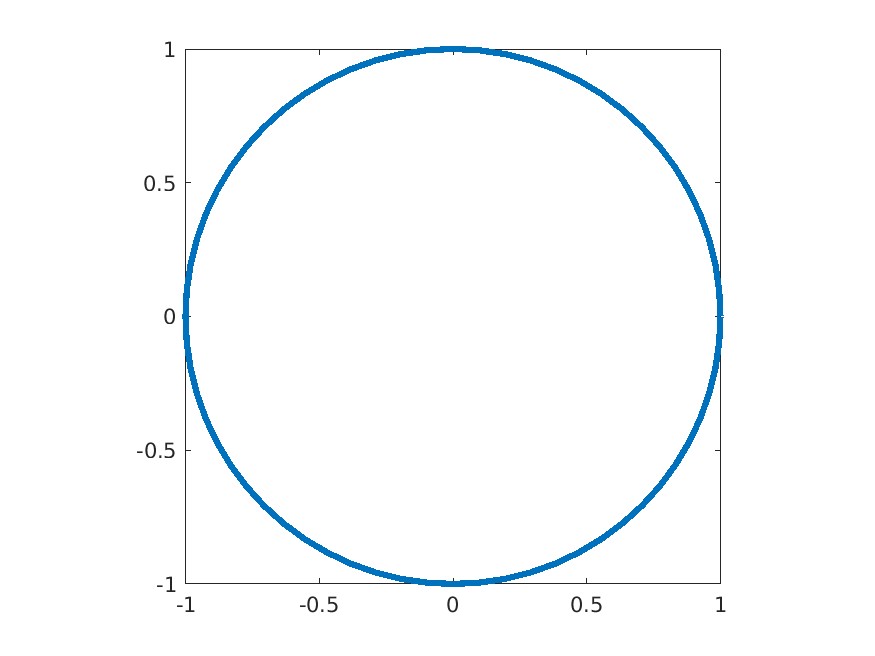}
      \end{minipage}
      \begin{minipage}{0.3\linewidth}
        \includegraphics[trim={4cm 0cm 4cm 2cm},clip,width=\textwidth]
        {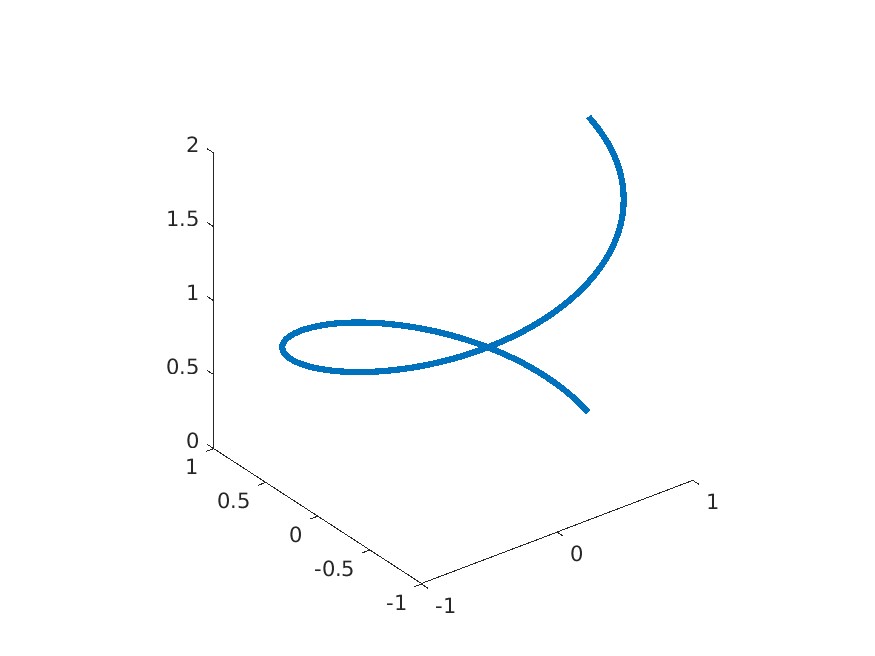}
      \end{minipage}
    \end{center}
    \caption{ \label{figure:circle and helix}
      Initial values $z_{h,0}$ for Example \ref{experiment:circle} (left)
      and Example \ref{experiment:helix} (right)}
  \end{figure}
  
  \begin{example}[Clamped helix]\label{experiment:helix}
    Now we give an example in three-dimensional space.\\
    Choose $I = [0, 2\sqrt{\pi^2 + 1}]$, $\lambda = \pi / \sqrt{\pi^2 + 1}$,
    $\mu = 1 / \sqrt{\pi^2 + 1}$, $T = 50$ and define $z_0: I \to \real^3$ via
    $$
    z_0(x) := (\cos(\lambda x), \sin(\lambda x), \mu x).
    $$ 
    This curve describes a helix as depicted in Figure \ref{figure:circle and helix} and for clamped boundary conditions, i.e. $\Gamma_D = \Gamma_D' = \del I$, $z_0$ is minimal for the bending energy and thus a solution to the elastic flow.
    We again calculate the approximation errors for the $\P_1$ and $\P_2$ discretization of the arc-length constraint as described above. The results are shown in Table \ref{table:maxH2 error helix} and Table \ref{table:H1L2 error helix}.
    For $\vert e_h \vert_{L^\infty H^2}$ we observe pretty much the same results as for the circle which is interesting, because it means that the results of Theorem \ref{theorem:error_estimate} also apply for clamped boundary conditions, even though this case is not covered by our proof. When it comes to the time derivative, we observe that the semi-discrete flow in case of the $\P_2$ constraint is no longer constant as in Example \ref{experiment:circle} and converges with quartic rate. The probable cause for this differing behaviour lies in the different boundary conditions used, i.e. the fact that $z_0$ is not stationary for the bending energy under the semi-clamped boundary conditions from Example \ref{experiment:circle}.
    Also quartic convergence is what we also get from $\I_{h,3} z_t$, thus we have quasi-optimal convergence of $e_{ht}$. When it comes to the weaker norms shown in Table \ref{table:weak norm error helix}, in case of the $\P_1$ constraint we observe quadratic convergence for both, the $L^\infty L^2$ and $L^\infty H^1$ error. In case of the $\P_2$ constraint we observe quartic convergence for both, $\Vert \tilde e_h \Vert_{L^\infty L^2}$ and $\vert \tilde e_h \vert_{L^\infty H^1}$. Since we have
      \begin{gather*}
        \Vert e_h^n \Vert \leq \Vert \tilde e_h^n \Vert + \Vert z(t_n) - \I_{h,3} z(t_n) \Vert \leq c h^4, \\
        \vert e_h^n \vert_{H^1(I)^d} \leq \vert \tilde e_h^n \vert_{H^1(I)^d}
        + \vert z(t_n) - \I_{h,3} z(t_n) \vert_{H^1(I)^d}
        \leq c h^4 + c h^3 \leq c h^3,
      \end{gather*}
      we obtain quartic convergence in $L^\infty L^2$ and cubic convergence $L^\infty H^1$.
    \DTLloaddb[noheader,keys={h,e11,r11,e12,r12,e13,r13,e21,r21,e22,r22,e23,r23}]
    {helix_h2_DB}
    {tables/helix_h2_combined.csv}  
    \begin{center}
      \begin{minipage}{.85\linewidth}
        \resizebox{\linewidth}{!}{
          \begin{tabular}{c|cc|cc|cc|cc}\toprule
            \multirow{3}{*}{$h$}
            & \multicolumn{4}{c|}{$\mathcal{P}_1$ constraint}
            & \multicolumn{4}{c}{$\mathcal{P}_2$ constraint} \\
            \cmidrule{2-9}
            & \multicolumn{2}{c|}{$\tau = 1/10$} 
            & \multicolumn{2}{c|}{$\tau = 1/20$}
            & \multicolumn{2}{c|}{$\tau = 1/10$}
            & \multicolumn{2}{c}{$\tau = 1/20$}\\
            & $\vert e_{h} \vert_{L^\infty H^2}$ & eoc 
            & $\vert e_{h} \vert_{L^\infty H^2}$ & eoc
            & $\vert e_{h} \vert_{L^\infty H^2}$ & eoc
            & $\vert e_{h} \vert_{L^\infty H^2}$ & eoc
            \DTLforeach*{helix_h2_DB}
            {\h=h,\a=e11,\b=r11,\c=e12,\d=r12,\e=e21,\f=r21,\g=e22,\i=r22}{%
              \DTLiffirstrow{\\\cmidrule{1-9}}{\\}%
              \h & \a & \b & \c & \d & \e & \f & \g & \i
            }%
            \\\bottomrule
        \end{tabular}}
        \captionof{table}
        {
          \label{table:maxH2 error helix}
          Approximation error in Example \ref{experiment:helix} in the $L^\infty H^2$ semi-norm. The observed
          convergence rate is linear in case of the $\P_1$ constraint and
          quadratic in case of the $\P_2$ constraint.
        }
    \end{minipage}\end{center}
    \DTLloaddb[noheader,keys={h,e11,r11,e12,r12,e21,r21,e22,r22}]
    {helix_time_DB}
    {tables/helix_time_combined.csv}  
    \begin{center}\begin{minipage}{.85\linewidth}
        \resizebox{\linewidth}{!}{
          \begin{tabular}{c|cc|cc|cc|cc}\toprule
            \multirow{3}{*}{$h$}
            & \multicolumn{4}{c|}{$\mathcal{P}_1$ constraint}
            & \multicolumn{4}{c}{$\mathcal{P}_2$ constraint} \\
            \cmidrule{2-9}
            & \multicolumn{2}{c|}{$\tau = 1/1000$} 
            & \multicolumn{2}{c|}{$\tau = 1/2000$}
            & \multicolumn{2}{c|}{$\tau = 1/1000$}
            & \multicolumn{2}{c}{$\tau = 1/2000$}\\
            & $\vert e_{h} \vert_{H^1 L^2}$ & eoc 
            & $\vert e_{h} \vert_{H^1 L^2}$ & eoc
            & $\vert e_{h} \vert_{H^1 L^2}$ & eoc
            & $\vert e_{h} \vert_{H^1 L^2}$ & eoc
            \DTLforeach*{helix_time_DB}
            {\h=h,\a=e11,\b=r11,\c=e12,\d=r12,\e=e21,\f=r21,\g=e22,\i=r22}{%
              \DTLiffirstrow{\\\cmidrule{1-9}}{\\}%
              \h & \a & \b & \c & \d & \e & \f & \g & \i
            }%
            \\\bottomrule
        \end{tabular}}
        \captionof{table}
        {
          \label{table:H1L2 error helix}
          Calculated approximation error in Example \ref{experiment:helix} in the $H^1L^2$ semi-norm. For the
          $\P_1$ constraint we observe linear convergence, which is the same
          convergence rate as in Example \ref{experiment:circle}.
          For the $\P_2$ constraint however the $H^1L^2$ error is no longer zero,
          but of order $O(h^4)$ instead.
        }      
      \end{minipage}
    \end{center}

    \DTLloaddb[noheader,keys={h,e11,r11,e12,r12,e21,r21,e22,r22}]
    {helix_weak_norms_DB}
    {tables/helix_weak_norms.csv}  
    \begin{center}\begin{minipage}{.85\linewidth}
        \resizebox{\linewidth}{!}{
          \begin{tabular}{c|cc|cc|cc|cc}\toprule
            \multirow{2}{*}{$h$}
            & \multicolumn{4}{c|}{$\mathcal{P}_1$ constraint}
            & \multicolumn{4}{c}{$\mathcal{P}_2$ constraint} \\
            \cmidrule{2-9}
            & $\Vert \widetilde e_{h} \Vert_{L^\infty L^2}$ & eoc 
            & $\vert \widetilde e_{h} \vert_{L^\infty H^1}$ & eoc
            & $\Vert \widetilde e_{h} \Vert_{L^\infty L^2}$ & eoc
            & $\vert \widetilde e_{h} \vert_{L^\infty H^1}$ & eoc
            \DTLforeach*{helix_weak_norms_DB}
            {\h=h,\a=e11,\b=r11,\c=e12,\d=r12,\e=e21,\f=r21,\g=e22,\i=r22}{%
              \DTLiffirstrow{\\\cmidrule{1-9}}{\\}%
              \h & \a & \b & \c & \d & \e & \f & \g & \i
            }%
            \\\bottomrule
        \end{tabular}}
        \captionof{table}
        {
          \label{table:weak norm error helix}
          Calculated $L^\infty L^2$ and $L^\infty H^1$ approximation error in Example \ref{experiment:helix} for time step size $\tau = 1/20$. For the $\P_1$ constraint we observe quadratic convergence while for the $\P_2$ constraint we observe quartic convergence.
        }      
      \end{minipage}
    \end{center}
  \end{example}
  
  \begin{example}[Forced helix]
    \label{experiment:circle to helix}
    For this last experiment we want to consider a non-stationary flow. However, in order to be able to calculate the approximation error, we still need to know the continuous solution. For this, we first choose a suitable function $\widetilde z$, that is non-stationary and then construct a right-hand side $f$ of the bending problem such that $\widetilde z$ is the continuous solution. 
    We therefore set $T = 1$, $I = [0,2\pi]$ and
    \begin{align*}
      r(t) := \sqrt{1 - \frac{t^2}{4 \pi^2}}\ , \leer
      \widetilde z(x,t) 
      := \left(r(t) \cos(x), r(t) \sin(x), \frac{tx}{2\pi} \right).
    \end{align*}
    Therefore $\widetilde z$ is a function, that starts as a circle in $\real^3$ and then transforms into a helix as visualized in Figure~\ref{figure:circle to helix}. Further by definition $\widetilde z$ satisfies the arc-length constraint $\vert \widetilde z_x \vert^2 = 1$.
    \begin{figure}
      \begin{center} 
        \includegraphics[trim={5cm 15mm 5cm 30mm},clip,width=.3\textwidth]
        {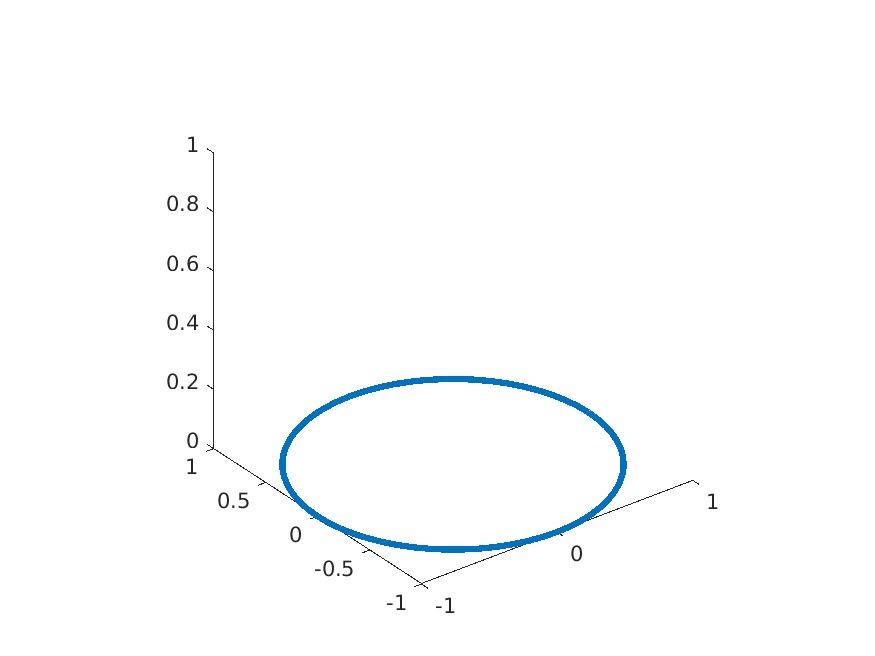}
        \includegraphics[trim={5cm 15mm 5cm 30mm},clip,width=.3\textwidth]
        {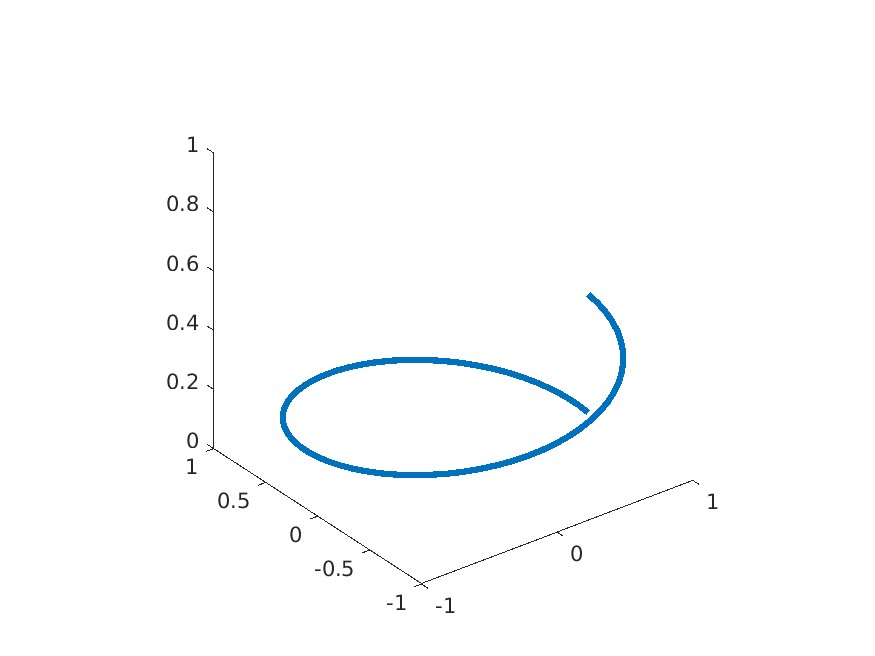}
        \includegraphics[trim={5cm 15mm 5cm 30mm},clip,width=.3\textwidth]
        {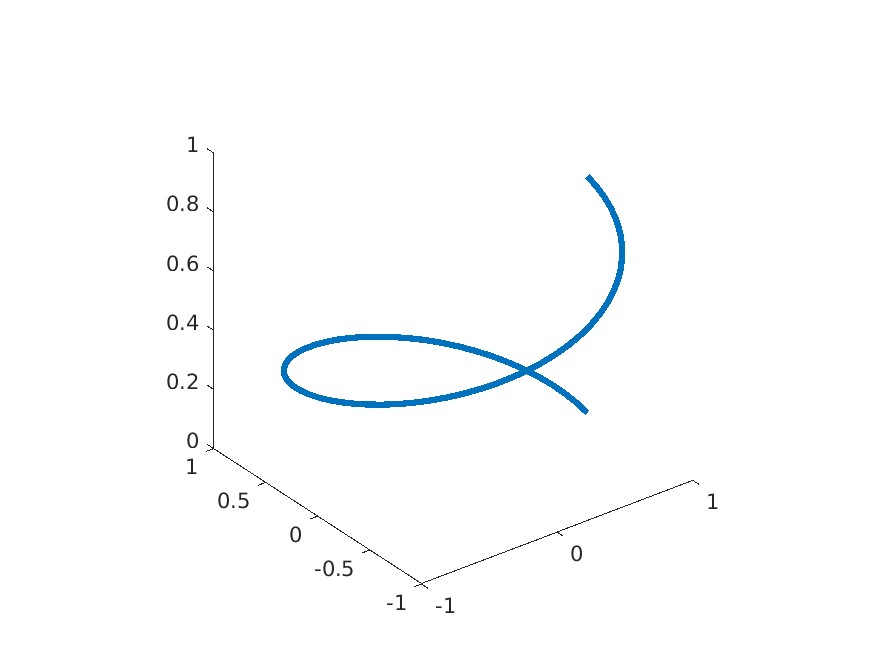}
      \end{center}
      \caption{
        Constructed continuous solution $\widetilde z(\cdot,t)$ from Example \ref{experiment:circle to helix} for $t = 0,\ 0.4,\ 0.8$.
      }  
      \label{figure:circle to helix} 
    \end{figure} \\
    We now define an operator $L: L^\infty([0,T],H^2(I)^d) \cap 
    H^1([0,T],L^2(I)^d)
    \to L^\infty([0,T],(H^2(I)^d))'$ via
    \begin{equation*}
      Lz(y) := \int_I z_t \cdot y + z_{xx} \cdot y_{xx} + \lambda(z) z_x \cdot y_x \ dx
    \end{equation*}
    with $\lambda(z) := -\int_x^b z_t \ d\sigma \cdot z_x - \vert z_{xx} \vert^2$.
    If $\lambda$ were the Lagrange multiplier corresponding to the  constraint $\vert z_x \vert^2 = 1$ for clamped boundary conditions, the original problem $(\ref{equation:elastic_flow_pde})$ could be written as $Lz = 0$. But since the Lagrange multiplier for this case is unknown, we use the one from the semi-clamped case as an approximation instead.
    Now we want to solve the equation 
    \begin{align*}
      \int_I z_t \cdot v + z_{xx} \cdot v_{xx} \ dx = f
    \end{align*}
    for all $v \in \G(z)$ with $ f = L \widetilde z$, boundary conditions $z = \widetilde z$, $z' = \widetilde z'$ on $\del I$, initial value $z(x,0) = \widetilde z(x,0)$ and $\widetilde z$ as defined above, i.e we solve
    \begin{align*}
      \int_I z_t \cdot y + z_{xx} \cdot y_{xx} \ dx
      &= \int_I \widetilde z_t \cdot y + \widetilde z_{xx} \cdot y_{xx} + \lambda(\widetilde z) \widetilde z_x \cdot y_x \ dx \\
      &= \int_I \widetilde z_t \cdot y + \widetilde z_{xx} \cdot y_{xx} - (\lambda(\widetilde z)\widetilde z_x)_x \cdot y \ dx
    \end{align*}
    for all $y \in \G(z)$ with $z$ satisfying the constraint $z_{tx} \cdot z_x = 0$ and the required boundary conditions.
    It is easy to see that $\widetilde z$ is indeed a solution to this problem.
    We discretize this problem by inserting the right-hand side $f$ into the discrete scheme $(\ref{equation:discrete_scheme})$, which we also adjust for the time dependent boundary conditions. We obtain
    \begin{align*}
      (d_t Z^{n+1}, Y) + \tau (d_tZ^{n+1}_{xx}, Y_{xx})
      = &-(Z^n_{xx}, Y_{xx}) + (F^{n+1},Y)
    \end{align*}
    with $(F^{n+1},Y) = (\widetilde z_t(t_{n+1}),Y) + (\widetilde z_{xx}(t_{n+1}), Y_{xx}) + ((\lambda(\widetilde z) \widetilde z_x)_x, Y)$.
    To simplify the right-hand side in the time stepping scheme, we first note that according to Lemma \ref{lemma:rhs_interpolation} we have $(\widetilde z_{xx},Y_{xx}) = ((\I_{h,3} \widetilde z)_{xx}, Y_{xx})$. We then approximate $\widetilde z_t$ and $(\lambda(\widetilde z) \widetilde z_x)_x$ with their respective $\P_3$-interpolants and with $U^n := \I_{h,3} \widetilde z(t_n)$, $V^n := \I_{h,3} \widetilde z_t(t_n)$ and $W^n := \I_{h,3}((\lambda(\widetilde z)\widetilde z_x)_x)(t_n)$ we obtain the modified discrete scheme
    \begin{align*}
      (d_t Z^{n+1}, Y) + \tau (d_t Z^{n+1}_{xx}, Y_{xx})
      = &-(Z^n_{xx}, Y_{xx}) + (V^{n+1}, Y) + (U^{n+1}_{xx},Y_{xx}) - (W^{n+1},Y)
    \end{align*}
    for all $Y \in \G_h(Z^n)$. With the mass matrix $M$ and second order stiffness
    matrix $S$ this can be written as:
    \begin{align*}
      Y^T (M + \tau S) d_t Z^{n+1} = Y^T(M(V^{n+1} - W^{n+1}) + S(U^{n+1} - Z^n))
    \end{align*}
    for all $Y \in \G_h(Z^n)$.
    We now set $Z^0 = \J_{h,3} \widetilde z(\cdot, 0)$ and then in every time step have to solve
    \begin{equation*}
      \begin{bmatrix}
        M + \tau S & B_n^T \\ B_n & 0
      \end{bmatrix}
      \begin{bmatrix}
        d_t Z^{n+1} \\ \Lambda^{n+1}
      \end{bmatrix}
      = \begin{bmatrix}
        M(V^{n+1} - W^{n+1})
        + S(U^{n+1} - Z^n) \\ Q^{n+1}
      \end{bmatrix},
    \end{equation*}
    where the matrix $B_n$ and the vector $Q^{n+1}$ are used to enforce the linearized constraint on the inner nodes and the boundary conditions $Z^{n+1} = \widetilde z(t_{n+1})$, $Z^{n+1}_x = \widetilde z_x(t_{n+1})$ on $\del I$. We again set $e_h^n := z(t_n) - Z^n$ and calculate the approximation errors $\vert e_h \vert_{L^\infty H^2}$ and $\vert e_h \vert_{H^1 L^2}$ as previously. The corresponding results are shown in Table \ref{table:maxH2 error circle to helix} and Table \ref{table:H1L2 error circle to helix}. The observed convergence rates for both constraints are the same as in the two stationary cases. For the approximation errors in the weaker norms, displayed in Table \ref{table:weak norm error forced helix}, we observe similar results to the stationary cases as well with quadratic convergence for the $\P_1$ constraint and quartic convergence for the $\P_2$ constraint.
    \DTLloaddb[noheader,keys={h,e11,r11,e12,r12,e21,r21,e22,r22}]
    {helix_flow_h2_DB}
    {tables/helix_flow_h2_combined.csv}
    \ \\  
    \begin{center}\begin{minipage}{.85\linewidth}
        \resizebox{\linewidth}{!}{
          \begin{tabular}{c|cc|cc|cc|cc}\toprule
            \multirow{3}{*}{$h$}
            & \multicolumn{4}{c|}{$\mathcal{P}_1$ constraint}
            & \multicolumn{4}{c}{$\mathcal{P}_2$ constraint} \\
            \cmidrule{2-9}
            & \multicolumn{2}{c|}{$\tau =$ 2e-05} 
            & \multicolumn{2}{c|}{$\tau =$ 1e-05}
            & \multicolumn{2}{c|}{$\tau =$ 2e-05}
            & \multicolumn{2}{c}{$\tau =$ 1e-05}\\
            & $\vert e_{h} \vert_{L^\infty H^2}$ & eoc 
            & $\vert e_{h} \vert_{L^\infty H^2}$ & eoc
            & $\vert e_{h} \vert_{L^\infty H^2}$ & eoc
            & $\vert e_{h} \vert_{L^\infty H^2}$ & eoc
            \DTLforeach*{helix_flow_h2_DB}
            {\h=h,\a=e11,\b=r11,\c=e12,\d=r12,\e=e21,\f=r21,\g=e22,\i=r22}{%
              \DTLiffirstrow{\\\cmidrule{1-9}}{\\}%
              \h & \a & \b & \c & \d & \e & \f & \g & \i
            }%
            \\\bottomrule
        \end{tabular}}
        \captionof{table}
        {
          \label{table:maxH2 error circle to helix}
          Approximation error for the approximation of the forced helix in Example \ref{experiment:circle to helix}
          in $L^\infty H^2$. Just like in the stationary case, we observe an
          improvement in convergence rate from linear to quadratic as we move
          from the $\P_1$ constraint to the $\P_2$ constraint.
        }    
    \end{minipage}\end{center}
    \DTLloaddb[noheader,keys={h,e11,r11,e12,r12,e21,r21,e22,r22}]
    {helix_flow_time_DB}
    {tables/helix_flow_time_combined.csv}
    \begin{center}\begin{minipage}{.85\linewidth}
        \resizebox{\linewidth}{!}{
          \begin{tabular}{c|cc|cc|cc|cc}\toprule
            \multirow{3}{*}{$h$}
            & \multicolumn{4}{c|}{$\mathcal{P}_1$ constraint}
            & \multicolumn{4}{c}{$\mathcal{P}_2$ constraint} \\
            \cmidrule{2-9}
            & \multicolumn{2}{c|}{$\tau =$ 2e-05} 
            & \multicolumn{2}{c|}{$\tau =$ 1e-05}
            & \multicolumn{2}{c|}{$\tau =$ 2e-05}
            & \multicolumn{2}{c}{$\tau =$ 1e-05}\\
            & $\vert e_{h} \vert_{H^1 L^2}$ & eoc 
            & $\vert e_{h} \vert_{H^1 L^2}$ & eoc
            & $\vert e_{h} \vert_{H^1 L^2}$ & eoc
            & $\vert e_{h} \vert_{H^1 L^2}$ & eoc
            \DTLforeach*{helix_flow_time_DB}
            {\h=h,\a=e11,\b=r11,\c=e12,\d=r12,\e=e21,\f=r21,\g=e22,\i=r22}{%
              \DTLiffirstrow{\\\cmidrule{1-9}}{\\}%
              \h & \a & \b & \c & \d & \e & \f & \g & \i
            }%
            \\\bottomrule
        \end{tabular}}
        \captionof{table}
        {
          \label{table:H1L2 error circle to helix}
          $H^1L^2$ approximation error for the forced helix in Example~\ref{experiment:circle to helix}.
          The observed experimental convergence rate is linear in case of the
          $\P^1$ constraint while it is of order 4 in case of the $\P_2$
          constraint, just as in the case of the stationary helix.
        }
    \end{minipage}\end{center}
  
    \DTLloaddb[noheader,keys={h,e11,r11,e12,r12,e21,r21,e22,r22}]
    {forced_helix_weak_norms_DB}
    {tables/forced_helix_weak_norms.csv}
    \begin{center}\begin{minipage}{.85\linewidth}
        \resizebox{\linewidth}{!}{
          \begin{tabular}{c|cc|cc|cc|cc}\toprule
            \multirow{2}{*}{$h$}
            & \multicolumn{4}{c|}{$\mathcal{P}_1$ constraint}
            & \multicolumn{4}{c}{$\mathcal{P}_2$ constraint} \\
            \cmidrule{2-9}
            & $\Vert \widetilde e_{h} \Vert_{L^\infty L^2}$ & eoc 
            & $\vert \widetilde e_{h} \vert_{L^\infty H^1}$ & eoc
            & $\Vert \widetilde e_{h} \Vert_{L^\infty L^2}$ & eoc
            & $\vert \widetilde e_{h} \vert_{L^\infty H^1}$ & eoc
            \DTLforeach*{forced_helix_weak_norms_DB}
            {\h=h,\a=e11,\b=r11,\c=e12,\d=r12,\e=e21,\f=r21,\g=e22,\i=r22}{%
              \DTLiffirstrow{\\\cmidrule{1-9}}{\\}%
              \h & \a & \b & \c & \d & \e & \f & \g & \i
            }%
            \\\bottomrule
        \end{tabular}}
        \captionof{table}
        {
          \label{table:weak norm error forced helix}
          Calculated $L^\infty L^2$ and $L^\infty H^1$ approximation errors in Example~\ref{experiment:circle to helix} for time step size $\tau = $ 1e-5. The observed experimental convergence rate is quadratic in case of the $\P^1$ constraint while it is quartic in case of the $\P_2$
          constraint.
        }
    \end{minipage}\end{center}
  \end{example}
  
  % Appendix
  \renewcommand{\thesection}{A}
  \section{Appendix}

  \begin{lemma}[Interpolation stability]
    \label{lemma:interpolation stability}
    Let $I = (0,1) \subset \real$ and $\T_h = \{I\}$. Then the nodal interpolants $\I_{h,k}: C^l(I) \to \S^{k,l}(\T_h) = \P_k$ satisfy
    \begin{equation}
      \label{equation:interpolation stabilty}
      \Vert \I_{h,k} u \Vert_{W^{m,p}(I)} \leq C \Vert u \Vert_{C^l(\overline I)}
    \end{equation}
    for all $m \geq 0$, $1 \leq p \leq \infty$.
  \end{lemma}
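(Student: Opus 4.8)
The statement is an elementary consequence of the fact that, on the single reference element, $\S^{k,l}(\T_h)$ with $\T_h=\{I\}$ coincides with $\P_k$ (continuity across element boundaries being vacuous when there is only one element) and that $\I_{h,k}$ is then a fixed linear map of finite rank. The plan is to expand $\I_{h,k}u$ in the associated nodal basis and to estimate the resulting finite sum term by term.

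First I would write $\I_{h,k}u=\sum_{j}N_j(u)\,\phi_j$, where $\{\phi_j\}\subset\P_k$ is the nodal basis dual to the defining functionals $\{N_j\}$ of $\I_{h,k}$. For $k\in\{1,2\}$, for which $l=0$, these functionals are the point evaluations $N_j(u)=u(z_j)$ at the nodes $z_j\in\N_k(\T_h)$; for $k=3$, for which $l=1$, they are the evaluations $u(z)$ and $u'(z)$ at $z\in\N_1(\T_h)=\{0,1\}$. In each case this identity holds because both sides lie in $\P_k$ and take the same value under every defining functional, and those functionals determine an element of $\P_k$ uniquely.

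Next, each $N_j$ is a point evaluation of either $u$ or $u'$, so $|N_j(u)|\le\Vert u\Vert_{C^l(\overline I)}$. Moreover each $\phi_j$ is a fixed polynomial of degree at most $k$ on the bounded interval $I=(0,1)$, hence $c_{j,m,p}:=\Vert\phi_j\Vert_{W^{m,p}(I)}$ is finite for every $m\ge 0$ and every $1\le p\le\infty$ (and $c_{j,m,p}=0$ once $m>k$). The triangle inequality then yields
\[
\Vert\I_{h,k}u\Vert_{W^{m,p}(I)}\le\sum_j|N_j(u)|\,\Vert\phi_j\Vert_{W^{m,p}(I)}\le\Bigl(\sum_j c_{j,m,p}\Bigr)\Vert u\Vert_{C^l(\overline I)},
\]
which is the claimed estimate with $C:=\sum_j c_{j,m,p}$.

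There is no real obstacle here; the only point deserving a word of care is that $\I_{h,k}$ is well defined on $C^l(\overline I)$, i.e.\ that the regularity index $l$ is exactly the one needed to apply the defining functionals (value evaluations for the Lagrange interpolants $\I_{h,1},\I_{h,2}$, and additionally first-derivative evaluations for the Hermite interpolant $\I_{h,3}$), which is already incorporated in the definitions in Section~\ref{section:Introduction}. This reference-element estimate is then the natural starting point for the mesh-dependent interpolation and inverse bounds on a general subinterval $I_i$ obtained by the usual affine scaling argument.
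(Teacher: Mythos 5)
Your proof is correct: the paper itself offers no argument at all, simply citing \cite[Lemma 4.4.1]{BS08}, and your nodal-basis expansion $\I_{h,k}u=\sum_j N_j(u)\phi_j$ with termwise bounds is precisely the standard proof behind that citation, so the two approaches coincide in substance. The only slip is the parenthetical claim that $c_{j,m,p}=0$ once $m>k$ --- the full $W^{m,p}$ norm still contains the lower-order derivative terms, so it does not vanish --- but this aside plays no role in the estimate.
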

  \begin{proof}
    This is a special case of \cite[Lemma 4.4.1]{BS08}.
  \end{proof}
  
  \begin{lemma}[Interpolation estimate]\label{lemma:interpolation_estimate}
    Let $I = \bigcup_{i = 1}^M [x_{i-1}, x_i]$ a decomposition of an interval $I$ with
    $\vert x_i - x_{i-1} \vert \leq h$ for all $i = 1,...,M$ and $u \in W^{m+1,p}(I)$ arbitrary. 
    Further, let $\I_{h,m}$ be the
    Lagrange-interpolation operator of polynomial degree $m \in \{1,2,3\}$. 
    Then $\I_{h,m}$ satisfies
    \begin{equation}
      \label{equation:interpolation estimate}
      \left( \sum_{i = 1}^M \vert u - \I_{h,m} u \vert_{W^{k,p}(I_i)}^p \right)^\frac{1}{p}
      \leq c h^{r-k} \vert u \vert_{W^{r,p}(I)}
    \end{equation}
    for all $k \in \{0,...,r\}$,
    where $r \in \{\max(1,m-1),...,m+1\}$ arbitrary.
    Further, for $k \geq 1$, the interpolant $\J_{h,3}$ satisfies the same estimate.
    For $k = 0$, from the interpolation estimate of $\I_{h,2}$, we get
    \begin{equation}
      \label{equation:interpolation estimate I}
      \Vert u - \J_{h,3} u \Vert_{L^\infty(I)}
      \leq c h^3 \vert u \vert_{H^4(I)}.
    \end{equation}
    With more regularity of $u$, the Simpson rule from Lemma \ref{lemma:new simpson rule}
    implies
    \begin{equation}
      \label{equation:interpolation estimate II}
      \Vert u - \J_{h,3} u \Vert_{L^\infty(I)}
      \leq c h^4 \vert u \vert_{W^{5,\infty}(I)}.
    \end{equation}
  \end{lemma}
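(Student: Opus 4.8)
The plan is to prove the four assertions in sequence: estimate $(\ref{equation:interpolation estimate})$ for $\I_{h,m}$ is classical and I would obtain it by the usual scaling argument, while the three statements about $\J_{h,3}$ all reduce to it. For $(\ref{equation:interpolation estimate})$, fix a subinterval $I_i$. The restriction $\I_{h,m}u|_{I_i}$ depends only on the values of $u$ — and, for $m = 3$, also of $u'$ — at the nodes in $\overline{I_i}$, and these point evaluations are well defined on $W^{r,p}(I_i)$ exactly when $r \ge \max(1,m-1)$, since one needs $W^{1,p}(I_i) \embeds C^0(\overline{I_i})$ for $m \in \{1,2\}$ and $W^{2,p}(I_i) \embeds C^1(\overline{I_i})$ for $m = 3$; this explains the lower bound on $r$. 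Transforming $I_i$ onto a fixed reference interval, using that $\I_{h,m}$ reproduces $\P_m \supseteq \P_{r-1}$ and is stable by Lemma \ref{lemma:interpolation stability}, I would apply the Bramble--Hilbert lemma and scale back to obtain $|u - \I_{h,m}u|_{W^{k,p}(I_i)} \le c\, h_i^{r-k} |u|_{W^{r,p}(I_i)} \le c\, h^{r-k} |u|_{W^{r,p}(I_i)}$ for $0 \le k \le r$ and $r \in \{\max(1,m-1),\dots,m+1\}$; raising to the power $p$, summing over $i$ and taking the $p$-th root (or the maximum when $p = \infty$) yields $(\ref{equation:interpolation estimate})$.

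For $\J_{h,3}$ with $k \ge 1$ the point is that $(\J_{h,3}u)' = \I_{h,2}u'$ by construction, so $|u - \J_{h,3}u|_{W^{k,p}(I_i)} = |u' - \I_{h,2}u'|_{W^{k-1,p}(I_i)}$. Applying the bound just proved for $\I_{h,2}$ to the function $u'$, with $k - 1$ in place of $k$ and $r - 1 \in \{1,2,3\}$ in place of $r$, turns the right-hand side into $c\, h^{r-k} |u|_{W^{r,p}(I_i)}$, valid for $1 \le k \le r$ and $r \in \{2,3,4\}$; summing over $i$ as before gives the claimed estimate.

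The case $k = 0$ is genuinely different because $\J_{h,3}$ does not reproduce the endpoint value at $b$, and I would start from the identity $u(x) - \J_{h,3}u(x) = \int_a^x (u' - \I_{h,2}u')\, d\sigma$. Bounding the integral by the Cauchy--Schwarz inequality gives $\Vert u - \J_{h,3}u \Vert_{L^\infty(I)} \le |I|^{1/2} \Vert u' - \I_{h,2}u' \Vert_{L^2(I)}$, so $(\ref{equation:interpolation estimate I})$ follows from the $\I_{h,2}$ estimate with $p = 2$, $k = 0$, $r = 3$. For $(\ref{equation:interpolation estimate II})$ I would instead use that $\int_a^{x_i} \I_{h,2}u'\, d\sigma$ is the composite Simpson quadrature of $u'$ over $[a,x_i]$, so that Lemma \ref{lemma:new simpson rule} gives $|u(x_i) - \J_{h,3}u(x_i)| = |\int_a^{x_i} (u' - \I_{h,2}u')\, d\sigma| \le c\, h^4 |u|_{W^{5,\infty}(I)}$ at every node $x_i$; the crude estimate $|\int_{x_{i-1}}^{x} (u' - \I_{h,2}u')\, d\sigma| \le h_i \Vert u' - \I_{h,2}u' \Vert_{L^\infty(I_i)} \le c\, h_i^4 |u|_{W^{4,\infty}(I_i)}$ then extends this to all $x \in I_i$.

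I do not expect a real obstacle here. The only things to keep track of are the admissible ranges of $k$ and $r$ through the two reductions to $\I_{h,2}$, and the observation that integrating $u' - \I_{h,2}u'$ over a full subinterval gains one power of $h_i$ thanks to the Simpson cancellation — this is precisely what lifts the $h^3$ bound $(\ref{equation:interpolation estimate I})$ to the $h^4$ bound $(\ref{equation:interpolation estimate II})$ at the expense of the stronger $W^{5,\infty}$-regularity; the rest is the standard scaling and Bramble--Hilbert machinery together with the quasi-uniformity convention $h \le c\, h_i$.
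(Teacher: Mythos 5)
Your proposal is correct and follows essentially the same route as the paper: the local Bramble--Hilbert/scaling estimate (the paper cites \cite[Theorem 4.4.4]{BS08} with $\P_{r-1}\subset\P_m$) for $\I_{h,m}$, the reduction $(\J_{h,3}u)'=\I_{h,2}u'$ for $k\ge 1$, the bound on $\int_a^x(u'-\I_{h,2}u')\,d\sigma$ for $k=0$, and the Simpson-rule cancellation at the nodes combined with a crude local extension to interior points for the $h^4$ estimate. The only cosmetic difference is that you control the $k=0$ integral via Cauchy--Schwarz while the paper bounds it by $\Vert u'-\I_{h,2}u'\Vert_{L^1(I)}$; both yield the same $h^3$ rate.
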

  \begin{proof}
    The first estimate follows from using local estimates on each subinterval and summing
    up over all intervals. The local estimate used is a special case of
    \cite[Theorem 4.4.4]{BS08} that is obtained by using $\P_{r-1} \subset \P_m$. For $k \geq 1$, (\ref{equation:interpolation estimate}) implies for $r \in \{1,...,3\}$
    \begin{align*}
      \left( \sum_{i=1}^M \vert u - \J_{h,3}u \vert_{W^{k,p}(I_i)}^p \right)^\frac{1}{p}
      = \left( \sum_{i=1}^M \vert u' - \I_{h,2} u' \vert_{W^{k-1,p}(I_i)}^p \right)^\frac{1}{p}
      \leq c h^{r-k+1} \vert u \vert_{W^{r+1,p}(I)},
    \end{align*}
    which is exactly (\ref{equation:interpolation estimate I}). For $k = 0$ we have
    \begin{align*}
      \Vert u - \J_{h,3} u \Vert_{L^\infty(I)}
      = \left\Vert \int_a^x u' - \I_{h,2} u' \ d\sigma \right\Vert_{L^\infty(I)}
      \leq \Vert u' - \I_{h,2} u' \Vert_{L^1(I)}
      \leq c h^3 \vert u \vert_{H^4(I)}.
    \end{align*}
    Further for each $x_j$ Lemma \ref{lemma:new simpson rule} implies
    \begin{align*}
      \vert (u - \J_{h,3}u)(x_j) \vert
      = \left\vert \int_a^{x_j} u' - \I_{h,2} u' \ dx \right\vert
      \leq C h^4 \Vert D_h^4 u' \Vert_{L^\infty(I)}.
    \end{align*}
    For $x \in (x_{j}, x_{j+1})$ we have
    \begin{align*}
      \vert (u - \J_{h,3}u)(x) \vert
      &\leq \vert (u - \J_{h,3})(x_j) \vert 
      + ch \Vert (u - \J_{h,3}u)' \Vert_{L^\infty(I)} \\
      &\leq c h^4 \Vert D_h^4 u' \Vert_{L^\infty(I)}
      + ch \Vert u' - \I_{h,2} u' \Vert_{L^\infty(I)}
      \leq c h^4 \Vert u \Vert_{W^{5,\infty}(I)},
    \end{align*}
    which proves $(\ref{equation:interpolation estimate II})$.
  \end{proof}
  
  \begin{lemma}\label{lemma:L1_estimate}
    Let $u \in C^0(I), v \in C^0(I)^d$ with $\vert v(z) \vert = 1$ for all
    $z \in \N_2(\T_h)$.
    Then we have
    $$
    \Vert \I_{h,2}(uv) \Vert_{L^1(I)^d} \leq c \Vert \I_{h,2}(u) \Vert_{L^1(I)}.
    $$
  \end{lemma}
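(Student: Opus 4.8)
\emph{Proof proposal.} The plan is to localize the estimate to a single element and then combine a scaling argument with the equivalence of norms on the finite-dimensional space $\P_2$. First I would write $\Vert \I_{h,2}(uv) \Vert_{L^1(I)^d} = \sum_{i=1}^M \Vert \I_{h,2}(uv) \Vert_{L^1(I_i)^d}$, and similarly for $\I_{h,2}(u)$, so that it suffices to establish
\[
  \Vert \I_{h,2}(uv) \Vert_{L^1(I_i)^d} \le c\, \Vert \I_{h,2}(u) \Vert_{L^1(I_i)}
\]
on each $I_i$ with a constant $c$ independent of $i$ and $h$. Let $\Phi\colon [0,1] \to I_i$ be the affine bijection with $\Phi(0) = x_{i-1}$, $\Phi(\tfrac12) = m_i$, $\Phi(1) = x_i$, and let $\widehat{\I}_2$ denote $\P_2$-interpolation at the fixed reference nodes $\{0,\tfrac12,1\}$. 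Since affine pullback maps $\P_2$ to $\P_2$ and preserves the node set, uniqueness of the interpolant gives $(\I_{h,2} w)\circ\Phi = \widehat{\I}_2(w\circ\Phi)$ on $[0,1]$, hence $\Vert \I_{h,2} w \Vert_{L^1(I_i)} = h_i \Vert \widehat{\I}_2(w\circ\Phi) \Vert_{L^1(0,1)}$. The factor $h_i$ is the same for $w = uv$ and $w = u$, so it cancels; in particular no shape-regularity hypothesis is needed here.

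Next I would expand in the reference Lagrange basis $\{\widehat\phi_0,\widehat\phi_1,\widehat\phi_2\}$ dual to $\{0,\tfrac12,1\}$. Writing $z_0, z_1, z_2$ for the three nodes of $I_i$, one has $\widehat{\I}_2\big((uv)\circ\Phi\big) = \sum_j u(z_j)\,v(z_j)\,\widehat\phi_j$ and $\widehat{\I}_2(u\circ\Phi) = \sum_j u(z_j)\,\widehat\phi_j$. Using $\vert v(z_j)\vert = 1$ and the triangle inequality,
\[
  \Vert \widehat{\I}_2\big((uv)\circ\Phi\big) \Vert_{L^1(0,1)^d}
  \le \sum_j \vert u(z_j)\vert\,\Vert \widehat\phi_j \Vert_{L^1(0,1)}
  \le C_1 \max_j \vert u(z_j)\vert ,
\]
with $C_1 := \sum_j \Vert \widehat\phi_j\Vert_{L^1(0,1)}$ a fixed number. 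For the matching lower bound on $\Vert \widehat{\I}_2(u\circ\Phi)\Vert_{L^1(0,1)}$ I would use that $(b_0,b_1,b_2) \mapsto \sum_j b_j \widehat\phi_j$ is a linear isomorphism $\real^3 \to \P_2$; since all norms on $\real^3$ (equivalently on $\P_2$) are equivalent, there is $C_2 > 0$, independent of everything, with $\max_j \vert b_j\vert \le C_2 \Vert \sum_j b_j \widehat\phi_j \Vert_{L^1(0,1)}$, and in particular $\max_j \vert u(z_j)\vert \le C_2 \Vert \widehat{\I}_2(u\circ\Phi)\Vert_{L^1(0,1)}$. Chaining the two bounds, multiplying by $h_i$, and summing over $i$ gives the claim with $c = C_1 C_2$.

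The only step requiring a little care is the reverse inequality $\max_j \vert u(z_j)\vert \le C_2 \Vert \widehat{\I}_2(u\circ\Phi)\Vert_{L^1(0,1)}$: this is exactly the (finite-dimensional) fact that the nodal values of a reference $\P_2$ polynomial are controlled by its $L^1$ norm, and it is legitimate precisely because we have passed to a reference element of fixed size, which is what makes $C_2$ independent of $h$ and $i$. Everything else is the triangle inequality, the normalization $\vert v(z_j)\vert = 1$, and affine scaling.
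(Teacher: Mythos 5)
Your proposal is correct and follows exactly the route the paper sketches: elementwise transformation to a reference interval followed by equivalence of norms on the finite-dimensional space $\P_2$ (here, comparing the $L^1$ norm with the maximum of the nodal values). Your write-up simply fills in the details that the paper's one-line proof leaves implicit.
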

  \begin{proof}
    The statement follows from elementwise transformation onto a reference interval and using norm basic norm equivalences between finite dimensional spaces.
  \end{proof}
  
  \begin{lemma}\label{lemma:L1_estimate_2}
    There exists a constant $c > 0$ such that for all $v_h \in \S^{2,0}(\T_h)^d$
    $$
    \Vert \I_{h,2}(\vert v_h \vert^2) \Vert_{L^1(I)} \leq c \Vert v_h \Vert^2.
    $$
  \end{lemma}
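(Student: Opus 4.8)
The plan is to localize the estimate to a single element, exploit that both sides scale identically under affine transformation, and then reduce everything to norm equivalence on a fixed finite-dimensional space; mesh regularity will play no role.

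First I would fix an element $I_i = [x_{i-1}, x_i]$ and introduce the affine reference map $\Phi_i \colon [0,1] \to I_i$, $\Phi_i(\hat t) := x_{i-1} + h_i \hat t$. By construction $\Phi_i$ sends the three interpolation nodes $0, \tfrac12, 1$ of the quadratic Lagrange interpolation on $[0,1]$ onto $x_{i-1}, m_i, x_i$. Since $\I_{h,2}$ depends only on the nodal values of its argument, it commutes with this pullback: writing $\hat v := v_h\vert_{I_i}\circ\Phi_i \in \P_2^d$ and letting $\hat\I$ denote the quadratic interpolant at $\{0,\tfrac12,1\}$ on $[0,1]$, one has $\big(\I_{h,2}(\vert v_h\vert^2)\big)\big\vert_{I_i}\circ\Phi_i = \hat\I(\vert\hat v\vert^2)$, even though $\vert v_h\vert^2$ is not itself piecewise quadratic. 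A change of variables then gives $\Vert \I_{h,2}(\vert v_h\vert^2)\Vert_{L^1(I_i)} = h_i \Vert \hat\I(\vert\hat v\vert^2)\Vert_{L^1((0,1))}$ and $\Vert v_h\Vert_{L^2(I_i)}^2 = h_i \Vert\hat v\Vert_{L^2((0,1))}^2$, so the factors $h_i$ will cancel on the two sides.

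Next I would estimate on the reference element. Lemma \ref{lemma:interpolation stability} applied with $k=2$, $l=0$, $m=0$, $p=1$ yields $\Vert \hat\I(\vert\hat v\vert^2)\Vert_{L^1((0,1))} \le C \Vert \vert\hat v\vert^2 \Vert_{C^0([0,1])} = C \Vert\hat v\Vert_{C^0([0,1])}^2$; alternatively one expands $\hat\I(\vert\hat v\vert^2)$ in the Lagrange basis at $\{0,\tfrac12,1\}$ and bounds the fixed $L^1$ norms of the basis functions. Since $\P_2^d$ is finite-dimensional, all norms on it are equivalent, so $\Vert\hat v\Vert_{C^0([0,1])} \le c \Vert\hat v\Vert_{L^2((0,1))}$, and therefore $\Vert \hat\I(\vert\hat v\vert^2)\Vert_{L^1((0,1))} \le c \Vert\hat v\Vert_{L^2((0,1))}^2$ with a constant $c$ independent of $i$ and $h$. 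Undoing the scaling and summing over the elements gives
\begin{align*}
  \Vert \I_{h,2}(\vert v_h\vert^2)\Vert_{L^1(I)}
  &= \sum_{i=1}^M h_i \Vert \hat\I(\vert\hat v\vert^2)\Vert_{L^1((0,1))}
  \le c \sum_{i=1}^M h_i \Vert\hat v\Vert_{L^2((0,1))}^2 \\
  &= c \sum_{i=1}^M \Vert v_h\Vert_{L^2(I_i)}^2
  = c \Vert v_h\Vert^2 .
\end{align*}
There is no serious obstacle here; the only point that deserves a word of care is the commutation of $\I_{h,2}$ with the affine pullback applied to the non-polynomial function $\vert v_h\vert^2$, which holds simply because interpolation sees only nodal values and $\Phi_i$ maps nodes to nodes.
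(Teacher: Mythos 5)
Your proof is correct and follows essentially the same route as the paper's (one-line) argument: elementwise transformation to the reference interval, the stability estimate of Lemma~\ref{lemma:interpolation stability}, and the inverse estimate (your norm equivalence $\Vert\hat v\Vert_{C^0}\leq c\Vert\hat v\Vert_{L^2}$ on $\P_2^d$ is exactly the reference-element instance of Lemma~\ref{lemma:inverse_estimates}). The only difference is that you spell out the details the paper leaves implicit, including the correct observation that the $h_i$ factors cancel so mesh regularity is not needed.
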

  \begin{proof}
    The statement follows from elementwise transformation onto a reference interval and application of the stability estimate (\ref{equation:interpolation stabilty}) and the inverse estimate (\ref{equation:inverse estimate}).
  \end{proof}
  
  \begin{lemma}
    [Improved Simpson rule]
    \label{lemma:new simpson rule}
    Let $\T_h = \{I_i \ \vert \ i = 1,...,M\}$ denote a dissection of $I$. Let further $f \in W^{1,1}(I)$ and $g \in C^0(\overline I)$ be elementwise in $C^4$. Then
    \begin{align*}
      \left\vert \int_I f (g - \I_{h,2} g) \ dx \right\vert
      \leq c h^4 (\Vert f \Vert_{L^1(I)} \Vert D_h^4 \Vert_{L^\infty(I)}
      + \Vert f' \Vert_{L^1(I)} \Vert D_h^3 g \Vert_{L^\infty(I)}).
    \end{align*}
  \end{lemma}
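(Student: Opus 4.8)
The estimate is additive over the elements $I_i=[x_{i-1},x_i]$, so I would first fix $i$, abbreviate $e:=g-\mathcal I_{h,2}g$, and bound $\int_{I_i}fe\,dx$. The key observation is that $\int_{I_i}\mathcal I_{h,2}g\,dx$ is exactly the Simpson quadrature $\tfrac{h_i}{6}\big(g(x_{i-1})+4g(m_i)+g(x_i)\big)$, whose error functional annihilates $\mathbb P_3$; scaling to the reference interval $[0,1]$ and invoking the standard quadrature error estimate for Simpson's rule (equivalently, a Bramble--Hilbert argument, tracking the affine change of variables) gives
$\big|\int_{I_i}e\,dx\big|\le c\,h_i^5\,\|g^{(4)}\|_{L^\infty(I_i)}$. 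I would also record the elementwise quadratic interpolation estimate $\|e\|_{L^\infty(I_i)}\le c\,h_i^3\,\|g^{(3)}\|_{L^\infty(I_i)}$, which is the $m=2$, $r=3$, $k=0$, $p=\infty$ case underlying Lemma~\ref{lemma:interpolation_estimate}.

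Next I would split $f$ into its elementwise mean $\bar f_i:=\tfrac1{h_i}\int_{I_i}f\,dx$ and the fluctuation $f-\bar f_i$, writing $\int_{I_i}fe\,dx=\bar f_i\int_{I_i}e\,dx+\int_{I_i}(f-\bar f_i)e\,dx$. For the first term, $|\bar f_i|\le h_i^{-1}\|f\|_{L^1(I_i)}$ together with the Simpson error bound yields $c\,h_i^4\,\|f\|_{L^1(I_i)}\,\|g^{(4)}\|_{L^\infty(I_i)}$. For the second term, the Poincar\'e-type bound $\|f-\bar f_i\|_{L^1(I_i)}\le h_i\,\|f'\|_{L^1(I_i)}$ (valid since $f\in W^{1,1}(I)$ is absolutely continuous) combined with the interpolation estimate yields $c\,h_i^4\,\|f'\|_{L^1(I_i)}\,\|g^{(3)}\|_{L^\infty(I_i)}$.

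Finally I would sum over $i$, bound each $h_i^4\le h^4$, and use $\sum_i\|f\|_{L^1(I_i)}=\|f\|_{L^1(I)}$, $\sum_i\|f'\|_{L^1(I_i)}=\|f'\|_{L^1(I)}$ and $\|g^{(j)}\|_{L^\infty(I_i)}\le\|D_h^jg\|_{L^\infty(I)}$ to arrive at
$\big|\int_I f(g-\mathcal I_{h,2}g)\,dx\big|\le c\,h^4\big(\|f\|_{L^1(I)}\|D_h^4g\|_{L^\infty(I)}+\|f'\|_{L^1(I)}\|D_h^3g\|_{L^\infty(I)}\big)$, as claimed.

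\textbf{Main obstacle.} The only genuinely delicate point is realizing that the crude interpolation bound $\|e\|_{L^\infty(I_i)}\le c\,h_i^3\|g^{(3)}\|$ is by itself one order short of the required $h^4$ rate, and that the missing order is supplied by the exactness of Simpson's rule on cubics---but only when tested against the \emph{constant} part $\bar f_i$ of $f$; the fluctuation $f-\bar f_i$ is then of order $h_i$ in $L^1(I_i)$, which is precisely enough to absorb the deficit. Verifying the Simpson error estimate with the correct scaling in $h_i$ is routine but is the technical heart of the argument.
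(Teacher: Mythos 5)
Your argument is correct and follows essentially the same route as the paper's proof: both split $f$ elementwise into its mean $\bar f_i$ plus a fluctuation, apply the classical Simpson quadrature error bound $\vert\int_{I_i}(g-\I_{h,2}g)\,dx\vert\le c\,h_i^5\Vert g^{(4)}\Vert_{L^\infty(I_i)}$ to the mean part, and combine the $L^1$ Poincar\'e bound $\Vert f-\bar f_i\Vert_{L^1(I_i)}\le c\,h_i\Vert f'\Vert_{L^1(I_i)}$ with the cubic-order $L^\infty$ interpolation estimate for the fluctuation part. Your write-up is, if anything, slightly more careful than the paper's in keeping the interpolation factor localized to each $I_i$ before summing.
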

  
  \begin{proof}
    Let us abbreviate
    \begin{align*}
      a_i := \fint_{I_i} f \ dx = \frac{1}{h_i} \int_{I_i} f \ dx.
    \end{align*}
    Then we have
    \begin{align*}
      \int_I f (g - \I_{h,2} g) \ dx
      = \sum_{i=1}^M \int_{I_i} (f - a_i)(g - \I_{h,2}g) \ dx
      + \sum_{i = 1}^M a_i \int_{I_i} g - \I_{h,2} g \ dx.
    \end{align*}
    The error formula for Simpson's rule, see \cite[Section 3.1]{SB02} yields
    \begin{align*}
      \left\vert \int_{I_i} g - \I_{h,2} g \ dx \right\vert
      \leq \frac{h^5}{90} \max_{x \in I_i} \vert f^{(4)}(x) \vert
      = c h^5 \Vert f^{(4)} \Vert_{L^\infty(I_i)}
    \end{align*}
    while it is well known from a Poincaré inequality that
    \begin{align*}
      \left\vert \int_{I_i} f - a_i \ dx \right\vert \leq c h_i \Vert f' \Vert_{L^1(I_i)}.
    \end{align*}
    Thus
    \begin{align*}
      \left\vert \int_I f (g - \I_{h,2} g) \ dx \right\vert
      &\leq c h \Vert f' \Vert_{L^1(I)} \Vert g - \I_{h,2} g \Vert_{L^\infty(I)}
      + c \Vert f \Vert_{L^1(I)} \sum_{i=1}^M h_i^5 \Vert D_h^4 g \Vert_{L^\infty(I_i)} \\
      &\leq c h^4 (\Vert f' \Vert_{L^1(I)} \Vert D_h^3 g \Vert_{L^\infty(I)}
      + \Vert f \Vert_{L^1(I)} \Vert D_h^4 g \Vert_{L^\infty(I)})
    \end{align*}
    where we also used Lemma \ref{lemma:interpolation_estimate}.
  \end{proof}
  
  \begin{lemma}\label{lemma:rhs_interpolation}
    Let $f \in H^2(I)^d$. Then we have
    \begin{equation*}
      \int_I v_{hxx} \cdot f_{xx} \ dx = \int_I v_{hxx} \cdot 
      (\I_{h,3} f)_{xx} \ dx
    \end{equation*}
    for all $v_h \in \S^{3,1}(\T_h)^d$.
  \end{lemma}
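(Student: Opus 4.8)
The plan is to reduce the assertion to the statement that the interpolation error $g := f - \I_{h,3} f \in H^2(I)^d$ is $L^2$-orthogonal to the set of second derivatives of functions in $\S^{3,1}(\T_h)^d$, and to prove this orthogonality by integrating by parts twice on each element, exploiting both the Hermite character of $\I_{h,3}$ and the fact that $v_h$ is piecewise cubic.

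First I would record the two properties that make everything work. Since $f \in H^2(I)^d$, the function $g$ lies in $H^2(I)^d$, so $g$ and $g'$ are continuous on $\overline I$ and nodal values are meaningful; and by the defining identities of $\I_{h,3}$ we have $g(x_i) = 0$ and $g'(x_i) = 0$ for every $x_i \in \N_1(\T_h)$. It therefore suffices to show that $\int_I v_{hxx} \cdot g_{xx} \ dx = 0$ for all $v_h \in \S^{3,1}(\T_h)^d$.

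Next I would write $\int_I v_{hxx} \cdot g_{xx} \ dx = \sum_{i=1}^M \int_{I_i} v_{hxx} \cdot g_{xx} \ dx$ and integrate by parts twice on each $I_i$. Because $v_h|_{I_i} \in \P_3$, the derivatives $v_{hxx}|_{I_i}$ and $v_{hxxx}|_{I_i}$ are smooth and $v_{hxxxx}|_{I_i} = 0$, while $g|_{I_i} \in H^2(I_i)^d$, so both integrations by parts are justified. The first gives $\int_{I_i} v_{hxx} \cdot g_{xx} \ dx = [v_{hxx} \cdot g_x]_{x_{i-1}}^{x_i} - \int_{I_i} v_{hxxx} \cdot g_x \ dx$, and the boundary term vanishes since $g'(x_{i-1}) = g'(x_i) = 0$. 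The second gives $-\int_{I_i} v_{hxxx} \cdot g_x \ dx = -[v_{hxxx} \cdot g]_{x_{i-1}}^{x_i} + \int_{I_i} v_{hxxxx} \cdot g \ dx$; the boundary term vanishes because $g$ vanishes at the nodes, and the remaining integral vanishes because $v_{hxxxx}|_{I_i} = 0$. Summing over $i$ yields $\int_I v_{hxx} \cdot g_{xx} \ dx = 0$, which is the claim.

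There is no genuine obstacle: the only points meriting a little care are that the regularity $f \in H^2(I)^d$ is precisely what gives the pointwise sense to the nodal conditions on $g$ and $g'$ and what legitimates the elementwise integrations by parts, and that the degree bound on $v_h$ is exactly what annihilates the final term.
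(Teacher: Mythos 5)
Your proof is correct and coincides with the paper's own argument: both reduce the claim to showing $\int_I v_{hxx}\cdot(f-\I_{h,3}f)_{xx}\,dx = 0$ and establish it by elementwise double integration by parts, using the Hermite nodal conditions to kill the boundary terms and $v_{hxxxx}|_{I_i}=0$ to kill the remaining integral. Your explicit remarks on why $f\in H^2(I)^d$ suffices for the nodal conditions and the integrations by parts are a welcome touch of care but not a departure from the paper's route.
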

  
  \begin{proof}
    Let $f \in H^2(I)^d$ and $v_h \in \S^{3,1}(\T_h)^d$ arbitrary. We have
    \begin{align*}
      \int_I v_{hxx} \cdot f_{xx} \ dx
      - \int_I v_{hxx} \cdot (\I_{h,3} f)_{xx} \ dx
      = \int_I v_{hxx} \cdot (f - \I_{h,3} f)_{xx} \ dx.
    \end{align*}
    Elementwise partial integration and the fundamental theorem of calculus yield
    \begin{align*}
      \int_{I_i} v_{hxx} \cdot (f - \I_{h,3} f)_{xx} \ dx
      &= [v_{hxx} \cdot (f - \I_{h,3} f)_x ]_{x_{i-1}}^{x_i}
      - [v_{hxxx} \cdot (f - \I_{h,3} f) ]_{x_{i-1}}^{x_i} \\
      &\kurz + \int_{I_i} v_{hxxxx} \cdot (f - \I_{h,3} f) \ dx.
    \end{align*}
    Now the first summand vanishes since $(\I_{h,3} f)_x(x_i) = f'(x_i)$ for all $i$.
    Analogously the second summand vanishes since
    $(\I_{h,3} f)(x_i) = f(x_i)$ for all $i$.
    Lastly the integral term also vanishes, since $v_h\vert_{I_i} \in \P_3$ and therefore
    $(v_h \vert_{I_i})_{xxxx} \equiv 0$. Now summation over all subintervals finishes the proof.
  \end{proof}
  
  \begin{lemma}[Inverse Estimate]\label{lemma:inverse_estimates}
    Let $I = (a,b)$ be an interval and $v \in \P_m,\ m \in \mathbb{N}$.
    We then have for all $k \geq 0$ and $p,q \in [0,\infty]$ the estimate
    \begin{align}
      \label{equation:inverse estimate}
      \vert v \vert_{W^{k,p}(I)} &\leq c (b-a)^{\frac{1}{p} - \frac{1}{q} - k} \Vert v \Vert_{L^q(I)}.
    \end{align}
  \end{lemma}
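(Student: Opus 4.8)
The plan is to reduce the estimate to a fixed reference interval by affine scaling and then invoke the equivalence of all norms on the finite-dimensional space $\P_m$. First I would introduce the reference interval $\hat I := (0,1)$ together with the affine bijection $\Phi : \hat I \to I$, $\Phi(\hat x) := a + (b-a)\hat x$, and for $v \in \P_m$ set $\hat v := v \circ \Phi \in \P_m$. Since $\P_m$ is a finite-dimensional vector space, all norms on it are equivalent, so there is a constant $c = c(m,k,p,q)$, independent of $v$, with
\begin{equation*}
  \vert \hat v \vert_{W^{k,p}(\hat I)} \leq c \Vert \hat v \Vert_{L^q(\hat I)}.
\end{equation*}

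Next I would record how the two quantities appearing in the claim transform under $\Phi$. The chain rule gives $v^{(k)}(x) = (b-a)^{-k}\hat v^{(k)}(\hat x)$, and a change of variables in the defining integrals yields
\begin{equation*}
  \vert v \vert_{W^{k,p}(I)} = (b-a)^{\frac{1}{p} - k} \vert \hat v \vert_{W^{k,p}(\hat I)}, \kurz \Vert v \Vert_{L^q(I)} = (b-a)^{\frac{1}{q}} \Vert \hat v \Vert_{L^q(\hat I)},
\end{equation*}
with the usual convention $1/\infty = 0$ covering the cases $p = \infty$ or $q = \infty$ (there the $k$-th derivative still picks up the factor $(b-a)^{-k}$, while the substitution contributes no power of $(b-a)$). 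Combining these two identities with the norm equivalence on $\hat I$ gives
\begin{equation*}
  \vert v \vert_{W^{k,p}(I)} = (b-a)^{\frac{1}{p}-k} \vert \hat v \vert_{W^{k,p}(\hat I)} \leq c\,(b-a)^{\frac{1}{p}-k} \Vert \hat v \Vert_{L^q(\hat I)} = c\,(b-a)^{\frac{1}{p}-\frac{1}{q}-k} \Vert v \Vert_{L^q(I)},
\end{equation*}
which is the asserted inequality $(\ref{equation:inverse estimate})$.

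There is no genuine obstacle here; the statement is classical. The only points that require a little care are the bookkeeping of the scaling exponents — in particular treating $p$ or $q$ equal to $\infty$ in a uniform way — and the observation that the constant in the norm equivalence on $\hat I$ depends only on $m,k,p,q$ and not on $v$, which is exactly what finite-dimensionality of $\P_m$ provides. If one wishes to avoid referring to $W^{k,p}(\hat I)$ for $k$ exceeding the polynomial degree, one simply notes that then $\vert \hat v \vert_{W^{k,p}(\hat I)} = 0$, so the estimate is trivially true in that range.
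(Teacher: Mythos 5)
Your argument is correct and coincides with the paper's own proof: an affine transformation to the reference interval $(0,1)$, equivalence of norms on the finite-dimensional space $\P_m$, and bookkeeping of the scaling exponents, with the cases $p=\infty$ or $q=\infty$ handled by the convention $1/\infty=0$. The paper only sketches this in two sentences, so your write-up is simply a more detailed version of the same route.
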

  \begin{proof}
    To show this estimate, one uses an affine transformation between $I$ and the reference interval $I_0 = (0,1)$. The estimate then simply follows from the transformation theorem and basic norm equivalences in finite dimensional vector spaces. The cases $p = \infty$ and $q = \infty$ are here treated via a case distinction.
  \end{proof}  
  
  \begin{lemma}[Gagliardo-Nirenberg inequality]\label{lemma:gagliardo_nirenberg}
    Let $I = (a,b) \subset \real$ and $u \in H^2(I)$. Then we have
    \begin{equation*}
      \Vert u' \Vert_{L^2(I)}
      \leq C \vert u \vert_{H^2(I)}^\frac{1}{2} \Vert u \Vert_{L^2(I)}^\frac{1}{2}
      + C \Vert u \Vert_{L^2(I)}.
    \end{equation*}
  \end{lemma}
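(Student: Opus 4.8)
The plan is to reduce the inequality to a quadratic form and then prove that form by combining a scale-invariant local interpolation estimate with an optimisation over a uniform decomposition of $I$. Since $\sqrt{s+t}\le\sqrt s+\sqrt t$ for $s,t\ge0$, it suffices to prove
\begin{equation*}
  \Vert u'\Vert_{L^2(I)}^2 \le C\big(\vert u\vert_{H^2(I)}\,\Vert u\Vert_{L^2(I)} + \Vert u\Vert_{L^2(I)}^2\big),
\end{equation*}
because taking square roots then yields the assertion.

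The first step is a local estimate free of any $H^2$--seminorm circularity: on the reference interval $(0,1)$ one proves $\Vert v'\Vert_{L^2(0,1)}^2 \le c_0\big(\Vert v\Vert_{L^2(0,1)}^2 + \Vert v''\Vert_{L^2(0,1)}^2\big)$ for all $v\in H^2(0,1)$, with an absolute constant $c_0$. I would do this by integration by parts, $\int_0^1\vert v'\vert^2 = v(1)v'(1)-v(0)v'(0)-\int_0^1 vv''$, estimating $\vert vv''\vert$ by Cauchy--Schwarz and each boundary value by the value of $v$ (resp.\ $v'$) at a nearby point at which it is controlled by its $L^2$--norm, plus a fundamental-theorem-of-calculus integral of $v'$ (resp.\ $v''$). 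The key point is to take the two auxiliary subintervals, the one on which $v$ is small near the boundary point and the one on which $v'$ is small, of \emph{different} lengths, with the former much shorter than the latter: with equal lengths the $\Vert v'\Vert_{L^2}^2$--contribution produced by the product of the boundary values has coefficient $1$ per endpoint and cannot be absorbed, whereas with sufficiently different lengths its coefficient is arbitrarily small, so after using Young's inequality on the remaining mixed terms it can be absorbed into the left-hand side. (Alternatively, this reference estimate follows at once from the compact embedding $H^2(0,1)\embedscomp H^1(0,1)$ via Ehrling's lemma.) An affine change of variables from $(0,1)$ to an arbitrary subinterval $J$ of length $\ell$ turns this into the scale-invariant estimate
\begin{equation*}
  \Vert u'\Vert_{L^2(J)}^2 \le c_0\,\ell^{-2}\,\Vert u\Vert_{L^2(J)}^2 + c_0\,\ell^2\,\vert u\vert_{H^2(J)}^2 .
\end{equation*}

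Finally I would split $I=(a,b)$ into $N$ subintervals of equal length $(b-a)/N$, apply the local estimate on each and add the contributions, which gives
\begin{equation*}
  \Vert u'\Vert_{L^2(I)}^2 \le c_0\,\frac{N^2}{(b-a)^2}\,\Vert u\Vert_{L^2(I)}^2 + c_0\,\frac{(b-a)^2}{N^2}\,\vert u\vert_{H^2(I)}^2 .
\end{equation*}
If $\Vert u\Vert_{L^2(I)}=0$ then $u\equiv0$, and if $\vert u\vert_{H^2(I)}=0$ then $u$ is affine and the bound is immediate; otherwise I would take $N:=\big\lceil (b-a)\,(\vert u\vert_{H^2(I)}/\Vert u\Vert_{L^2(I)})^{1/2}\big\rceil\ge1$, which balances the two terms and produces $\Vert u'\Vert_{L^2(I)}^2\le C\big(\vert u\vert_{H^2(I)}\Vert u\Vert_{L^2(I)}+\Vert u\Vert_{L^2(I)}^2\big)$ with $C$ depending only on $b-a$. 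Taking square roots completes the proof. I expect the only non-routine point to be the reference estimate on $(0,1)$ -- in particular the observation that the two auxiliary subintervals must have different lengths for the integration-by-parts argument to close; everything else is the elementary optimisation over $N$ and bookkeeping.
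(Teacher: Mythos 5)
Your proof is correct, but it is genuinely different from what the paper does: the paper does not prove this lemma at all, it simply cites \cite[Theorem 1.3]{LZ22}. Your argument is the standard self-contained route to one-dimensional Gagliardo--Nirenberg interpolation on a bounded interval: a reference estimate $\Vert v'\Vert_{L^2(0,1)}^2\le c_0(\Vert v\Vert_{L^2(0,1)}^2+\Vert v''\Vert_{L^2(0,1)}^2)$, rescaled to a subinterval of length $\ell$ to give the scale-invariant bound $c_0(\ell^{-2}\Vert u\Vert_{L^2(J)}^2+\ell^2\vert u\vert_{H^2(J)}^2)$, summed over a uniform partition, and optimized over the number $N\ge 1$ of subintervals. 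I checked the scaling exponents, the optimization with $N=\lceil (b-a)(\vert u\vert_{H^2}/\Vert u\Vert_{L^2})^{1/2}\rceil$ (both in the regime where the ceiling matters and where it does not), and the degenerate cases; all are sound, and your reduction via $\sqrt{s+t}\le\sqrt{s}+\sqrt{t}$ recovers the stated form. Your remark that the two auxiliary subintervals in the integration-by-parts proof of the reference estimate must have different lengths is a real (and often overlooked) point, though the Ehrling route you mention as an alternative is shorter and is in fact exactly what the paper uses for the neighbouring Lemma \ref{corollary:GN}, so the two appendix lemmas could share one mechanism. What your approach buys over the paper's citation is a completely elementary, self-contained proof with an explicit dependence of the constant on $b-a$, and it makes transparent where the additive lower-order term $C\Vert u\Vert_{L^2(I)}$ comes from: it is forced by the constraint $N\ge 1$, i.e.\ by the fact that the optimal subinterval length cannot exceed $b-a$. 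What the citation buys is brevity and a sharper formulation in the reference.
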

  \begin{proof}
    A proof can be found in \cite[Theorem 1.3]{LZ22}.
  \end{proof}
  
  \begin{lemma}
    \label{corollary:GN}
    For all $\varepsilon > 0$ there exists $c_\varepsilon > 0$ such that for all $u \in H^2(I)$ we have
    \begin{align*}
      \Vert u' \Vert_{L^\infty(I)}^2
      \leq \varepsilon \Vert u'' \Vert^2 + c_\varepsilon \Vert u \Vert^2
    \end{align*}
  \end{lemma}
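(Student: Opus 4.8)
The plan is to reduce the estimate to the one-dimensional multiplicative embedding $H^1(I) \embeds C^0(\overline I)$, combined with the Gagliardo--Nirenberg inequality of Lemma \ref{lemma:gagliardo_nirenberg} and Young's inequality.

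First I would record the quantitative embedding: for every $v \in H^1(I)$ one has
\begin{equation*}
  \Vert v \Vert_{L^\infty(I)}^2 \leq \frac{1}{b-a} \Vert v \Vert^2 + 2 \Vert v \Vert \, \Vert v' \Vert .
\end{equation*}
This follows from the identity $v(x)^2 = v(y)^2 + 2\int_y^x v v' \ ds$, valid for all $x, y \in \overline I$ since $v \in H^1(I) \embeds C^0(\overline I)$: one bounds $2\int_y^x v v' \ ds \leq 2 \Vert v \Vert \, \Vert v' \Vert$, averages the resulting inequality over $y \in I$, and takes the supremum over $x \in I$.

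Applying this with $v = u'$ gives $\Vert u' \Vert_{L^\infty(I)}^2 \leq c \Vert u'' \Vert \, \Vert u' \Vert + c \Vert u' \Vert^2$. Lemma \ref{lemma:gagliardo_nirenberg} then yields $\Vert u' \Vert \leq c \Vert u'' \Vert^{1/2} \Vert u \Vert^{1/2} + c \Vert u \Vert$, hence
\begin{equation*}
  \Vert u' \Vert_{L^\infty(I)}^2 \leq c \Vert u'' \Vert^{3/2} \Vert u \Vert^{1/2} + c \Vert u'' \Vert \, \Vert u \Vert + c \Vert u \Vert^2 .
\end{equation*}
Finally I would absorb the $\Vert u'' \Vert$-dependent terms with Young's inequality: for the first term, with exponents $4/3$ and $4$, one gets $c \Vert u'' \Vert^{3/2} \Vert u \Vert^{1/2} \leq \tfrac{\varepsilon}{2} \Vert u'' \Vert^2 + c_\varepsilon \Vert u \Vert^2$, and for the second, with exponents $2$ and $2$, one gets $c \Vert u'' \Vert \, \Vert u \Vert \leq \tfrac{\varepsilon}{2} \Vert u'' \Vert^2 + c_\varepsilon \Vert u \Vert^2$; summing these and relabelling the constant proves the claim.

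There is no real obstacle here; the only mildly delicate points are deriving the multiplicative $L^\infty$-bound without boundary terms (handled by the averaging step above) and choosing the Young exponents so that the remainder is controlled purely by $\Vert u \Vert^2$. One could alternatively obtain the same conclusion from the compact embedding $H^2(I) \embedscomp C^1(\overline I)$ via Ehrling's lemma together with the bound $\Vert u \Vert_{H^2(I)} \leq c(\Vert u'' \Vert + \Vert u \Vert)$ (itself a consequence of Lemma \ref{lemma:gagliardo_nirenberg} and Young's inequality), but the direct argument above is more elementary and self-contained.
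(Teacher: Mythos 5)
Your proof is correct, but it takes a genuinely different route from the paper. The paper disposes of this lemma in one line by invoking the compactness of the embedding $H^2(I) \embedscomp W^{1,\infty}(I)$ together with Ehrling's lemma (plus, implicitly, the equivalence $\Vert u \Vert_{H^2(I)}^2 \leq c(\Vert u'' \Vert^2 + \Vert u \Vert^2)$ needed to pass from the full $H^2$-norm to $\Vert u'' \Vert$) --- precisely the alternative you sketch in your closing remark. Your argument instead derives the pointwise bound $\Vert v \Vert_{L^\infty(I)}^2 \leq \tfrac{1}{b-a}\Vert v \Vert^2 + 2\Vert v \Vert\,\Vert v' \Vert$ by hand, applies it to $v = u'$, and then combines Lemma \ref{lemma:gagliardo_nirenberg} with Young's inequality (the exponents $4/3$ and $4$ for the term $\Vert u'' \Vert^{3/2}\Vert u \Vert^{1/2}$ are chosen correctly, and the averaging over $y$ correctly avoids any boundary term). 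What the paper's approach buys is brevity; what yours buys is a self-contained, fully quantitative proof with in-principle explicit constants $c_\varepsilon$, whereas the Ehrling route is non-constructive. Both are valid; your version is longer but arguably more informative.
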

  
  \begin{proof}
    The proof follows immediately from the compactness of the embedding $H^2(I) \embedscomp W^{1,\infty}(I)$ and Ehrling's lemma, see \cite[Theorem 7.30]{RR04}.
  \end{proof}
  
  \begin{acknowledgement}
    Financial support by the German Research Foundation (DFG) via
    research unit FOR 3013 
    \textit{Vector- and tensor-valued surface PDEs} (417223351)
    is gratefully acknowledged.
  \end{acknowledgement}
     
  \printbibliography
\end{document}